\documentclass[final, 12pt]{amsart}

\usepackage{amsmath, amssymb, esint}
\usepackage{graphicx, xcolor}
\usepackage{microtype}

\ifdefined\ifenablehyperref\else\expandafter\newif\csname
        ifenablehyperref\endcsname\enablehyperreftrue\fi
\ifenablehyperref\usepackage[final]{hyperref}\else\fi

\ifdefined\ifdraft\else\expandafter\newif\csname ifdraft\endcsname\fi
\ifdim\overfullrule>0pt \drafttrue\else\draftfalse\fi

\ifdefined\ifsharetheoremcounter\else\expandafter\newif\csname
        ifsharetheoremcounter\endcsname\sharetheoremcounterfalse\fi

\allowdisplaybreaks[4]

\newcounter{modifiedcounter}

\ifdraft
    \newcommand{\fixme}[1]{\marginpar{\parbox{0in}{\fbox{\parbox{0.6255in}{
            \raggedright \scriptsize #1}}}}}
    
    \newcommand{\dummyline}[1]{\vspace{0.5em}\noindent\rule{\textwidth}{#1 pt}
            \vspace{0.5em}}
    \newcommand{\dummynewpage}{\newpage}
    
\else
    \newcommand{\fixme}[1]{}
    
    \newcommand{\dummyline}[1]{}
    \newcommand{\dummynewpage}{}
    
\fi

\newlength{\defaultarrayrulewidth}
\newcommand{\owedge}{\mathbin{\ooalign{$\bigcirc$\cr\hidewidth$\,\,\wedge$
        \hidewidth\cr}}}
\newcommand{\ddpfir}[1]{\frac{\partial}{\partial#1}}

\newcommand{\Boxg}[1]{\left(\ddpfir{t}-\Delta_{#1}\right)}

\theoremstyle{remark}
\numberwithin{equation}{section}

\newtheorem*{claim*}{Claim}

\newtheorem{theorem}{Theorem}[section]

\ifsharetheoremcounter
    \newtheorem{definition}[theorem]{Definition}
    \newtheorem{claim}[theorem]{Claim}
    \newtheorem{lemma}[theorem]{Lemma}
    \newtheorem{proposition}[theorem]{Proposition}
    \newtheorem{corollary}[theorem]{Corollary}
    \newtheorem{remark}[theorem]{Remark}
\else
    \newtheorem{definition}{Definition}[section]
    \newtheorem{claim}{Claim}[section]
    \newtheorem{lemma}{Lemma}[section]
    \newtheorem{proposition}{Proposition}[section]
    \newtheorem{corollary}{Corollary}[section]
    \newtheorem{remark}{Remark}[section]
\fi

\DeclareMathOperator{\Inj}{inj}
\DeclareMathOperator{\id}{id}
\DeclareMathOperator{\interior}{int}
\DeclareMathOperator{\Ric}{Ric}
\DeclareMathOperator{\Rc}{Rc}
\DeclareMathOperator{\Rm}{Rm}

\DeclareMathOperator{\tr}{tr}
\DeclareMathOperator{\Vol}{Vol}
\DeclareMathOperator{\VolB}{VolB}

\DeclareMathOperator{\AVR}{AVR}

\def\x{\mathbf{x}}
\def\y{\mathbf{y}}
\def\z{\mathbf{z}}
\def\w{\mathbf{w}}

\begin{document}

\title{On an invariant curvature cone along $4$-dimensional Ricci flow}

\author[H. Ding]{Hongting Ding}
\address[Hongting Ding]{School of Science, Shenzhen Campus of Sun Yat-sen University, No. 66, Gongchang Road, Guangming District, Shenzhen, Guangdong 518107, P. R. China.}
\email{dinght@mail2.sysu.edu.cn}

\author[S. Huang]{Shaochuang Huang}
\address[Shaochuang Huang]{School of Science, Shenzhen Campus of Sun Yat-sen University, No. 66, Gongchang Road, Guangming District, Shenzhen, Guangdong 518107, P. R. China.}
\email{huangshch23@mail.sysu.edu.cn}
\thanks{S. Huang is partially supported by a regular fund from Shenzhen Science
        and Technology Program No. JCYJ20240813151005007 and a start-up
        fund from SYSU}

\author[Z. Peng]{Zhuo Peng}
\address[Zhuo Peng]{School of Science, Shenzhen Campus of Sun Yat-sen University, No. 66, Gongchang Road, Guangming District, Shenzhen, Guangdong 518107, P. R. China.}
\email{pengzh55@mail2.sysu.edu.cn}

\subjclass[2020]{Primary 53E20}

\begin{abstract}
    In this paper, we study $4$-dimensional complete noncompact manifolds $(M,g)$ satisfying $\Rm_{g}\in\mathfrak{C}_{\eta,\mu}$ via Ricci flow. Under the additional assumption of maximal volume growth, we prove topological and geometric gap theorems. We also study $4$-dimensional complete  manifolds satisfying a lower bound with respect to $\mathfrak{C}_{\eta,\mu}$ and obtain regularity results for Gromov-Hausdorff limits of complete volume non-collapsed manifolds satisfying such curvature lower bounds.
    \end{abstract}

\maketitle

\section{Introduction and main results}

One of the core issues in  differential geometry is to study the topology and geometry of manifolds with certain curvature conditions. Ricci flow, introduced by Hamilton \cite{Hamilton82JDG} in 1982, has been proven to be a powerful tool for dealing with these kinds of problems. In this paper, we investigate the topology and geometry of $4$-dimensional complete Riemannian manifolds with a pointwise curvature condition via Ricci flow and we mainly discuss the noncompact case under a volume non-collapsed assumption.

In \cite{Hamilton86JDG,Hamilton97CAG}, Hamilton studies $4$-dimensional compact Riemannian manifolds with nonnegative curvature operator and nonnegative isotropic curvature respectively. In dimension $4$, the Lie algebra $\mathfrak{so}(4)$ splits as a direct sum of two copies of $\mathfrak{so}(3)$ and the space of
        $2$-forms $\wedge^2_p(M)$ admits the orthogonal decomposition
\[
    \wedge^2_p(M)=\wedge^+_p(M)\oplus\wedge^-_p(M),
\]
into the eigenspaces of the Hodge star operator $\star:\wedge^2_p(M)\to
        \wedge^2_p(M)$ of eigenvalues $\pm1$.
Then the curvature operator $\Rm$, viewed as a self-adjoint linear operator,
        admits a block decomposition into four pieces,
\begin{equation*} 
    \Rm=\begin{pmatrix}
        A & B \\
        B^T & C
    \end{pmatrix},
\end{equation*}
where $A:\wedge^+_p(M)\to\wedge^+_p(M)$ and $C:\wedge^-_p(M)\to\wedge^-_p(M)$ are linear transformations, $B:\wedge^-_p(M)\to\wedge^+_p(M)$ is a linear operator, and $B^T$ denotes the adjoint of $B$. We will review and discuss the decomposition of curvature operators in dimension $4$ with more details in Section \ref{sect decomp. 4mfd.}.

Let $\mathfrak{C}_{\eta,\mu}$ be the curvature cone
        defined by
\[
    \mathfrak{C}_{\eta,\mu}:=\left\{\Rm\in S_B^2(\mathfrak{so}(4))\left|\,
            \begin{aligned}
        &(B_2+B_3)^2\le\eta(A_1+A_2)(C_1+C_2), \\
        &A_2+A_3\le\mu(A_1+A_2), \\
        &C_2+C_3\le\mu(C_1+C_2)
    \end{aligned}\right.\right\},
\]
where $\eta,\mu$ are constants satisfying $\mu-1\ge\eta\ge0$ and $\mu>1$. Here $\{A_i,B_i,C_i\}_{i=1}^3$ are eigenvalues (singular values) of the linear operators $A,B,C$ respectively and satisfy $A_1\le A_2\le A_3, C_1\le C_2\le C_3$ and $0\le B_1\le B_2\le B_3$.
As we will discuss in Section \ref{pre-cone}, $\mathfrak{C}_{\eta,\mu}$ is
        a well-defined pointwise curvature cone since it is invariant under change of local orientation and 
         manifolds we consider in this paper are possibly non-oriented.

This curvature condition is stronger than  nonnegative isotropic curvature, and if we assume $\eta\leq1$ more, this curvature condition also implies nonnegative Ricci curvature. Recently, the study of topology and geometry of complete noncompact manifolds with weakly PIC1 is well developed, see for example \cite{HL21JGA,LT25GT,DSS24arXiv,DLSST26arXiv} and references therein. However, it seems not comparable between $\mathfrak{C}_{\eta,\mu}$ and $\mathfrak{C}_\mathrm{WPIC1}$,  where $\mathfrak{C}_\mathrm{WPIC1}$ is the cone of curvature operators with weakly PIC1. It is interesting to see whether one can obtain corresponding results for $4$-dimensional complete noncompact manifolds $(M,g)$ with $\Rm_{g}\in\mathfrak{C}_{\eta,\mu}$.  

The curvature cone $\mathfrak{C}_{\eta,\mu}$ has been considered by Hamilton in \cite{Hamilton97CAG}, as the intermediate process of proving a pinching theorem \cite[Theorem 1.1, Section 2]{Hamilton97CAG} for  $4$-dimensional compact Ricci flow with positive isotropic curvature. In particular, he proves that the curvature cone $\mathfrak{C}_{\eta,\mu}$ is preserved by Ricci flow on compact manifolds, see \cite[Theorem 1.3 and Theorem 1.4, Section 2]{Hamilton97CAG}. Then he applies these to understand the formation of singularities for $4$-dimensional compact Ricci flow with positive isotropic curvature, see also \cite{CZ06JDG2,CTZ12}. In this paper, we study the curvature cone $\mathfrak{C}_{\eta,\mu}$ on complete noncompact manifolds along $4$-dimensional Ricci flow. We first obtain the following short-time existence result for Ricci flow locally. 

\begin{theorem}[Theorem \ref{thm local exist.0}] \label{thm1-int}
    For any $v_0>0$, $\eta\in(0,1]$ and $\mu\ge\eta+1$, there exist
            $T(\eta,\mu,v_0)>0$ and $D_1(\mu,v_0)>0$ such that
            the following holds.
    Let $(M,g_0)$ be a $4$-dimensional Riemannian manifold.
    Suppose $B_{g_0}(p,s_0)\subset\subset M$ for some $p\in M$ and $s_0\ge4$
            such that
    \[\left\{\;\begin{aligned}
        &\Rm_{g_0}\in\mathfrak{C}_{\eta,\mu}\text{ on }B_{g_0}(p,s_0); \\
        &\VolB_{g_0}(x,1)\ge v_0\text{ for all }x\in B_{g_0}(p,s_0-1).
    \end{aligned}\right.\]
    Then there exists a Ricci flow $g(t)$ on $B_{g_0}(p,s_0-2)$
            for $t\in[0,T]$ with $g(0)=g_0$ such that
    \[
        \sup\limits_{B_{g_0}(p,s_0-2)}|\Rm|_{g(t)}\le\frac{D_1}{t}\text{ for all }t\in(0,T].
    \]
\end{theorem}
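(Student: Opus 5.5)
The plan follows the now-standard local Ricci flow scheme of Hochard, Simon--Topping, Lee--Tam and Lai: conformally modify $g_0$ to a complete metric, run Shi's flow for the modified metric, and get uniform $c/t$ bounds from a pseudolocality-type contradiction argument that exploits the invariant cone $\mathfrak{C}_{\eta,\mu}$. Concretely, on $B_{g_0}(p,s_0)$ I will use Hochard's conformal change to build a complete metric $h=e^{2f}g_0$ on a slightly smaller domain $U\supset B_{g_0}(p,s_0-1)$. The metric $h$ agrees with $g_0$ where the distance to the boundary is $\gtrsim 1$ and has bounded curvature. Shi's theorem then gives a complete bounded-curvature Ricci flow $h(t)$ on $U$.

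The core estimate is the claim that, as long as $h(t)$ stays in a neighbourhood of the cone on the good region, $|\Rm|_{h(t)}\le D_1/t$ there for $t\le T$. I would prove this by contradiction and point-picking, following Simon--Topping and Bamler--Cabezas-Rivas--Wilking. Suppose the claim fails along a sequence. Rescale at the bad points and use Perelman's pseudolocality to control the flow near the modified boundary. Since the volume lower bound $v_0$ propagates for a short time by the standard volume/distance-distortion lemmas, the rescaled flows are $\kappa$-noncollapsed. So we obtain a limit complete ancient (or eternal) solution that is noncollapsed, has bounded curvature normalised to $|\Rm|(x_\infty,0)=1$, and has $\Rm\in\mathfrak{C}_{\eta,\mu}$, because the approximate cone condition is scale invariant and becomes exact in the limit. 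Hamilton's invariance of $\mathfrak{C}_{\eta,\mu}$ (Theorems 1.3--1.4 of \cite{Hamilton97CAG}) is used here to keep the curvature inside a slightly enlarged cone along the flow. This uses $\eta\le 1$ and $\mu\ge\eta+1$, and the enlargement is handled by a maximum principle on the noncompact bounded-curvature flow. The condition $\eta\le1$ gives $\Ric\ge0$, and the cone implies nonnegative isotropic curvature. The limit should then have positive asymptotic volume ratio inherited from the volume bound, after a Bishop--Gromov argument at the larger scales that blow-up makes available. But a nonflat ancient $\kappa$-solution with $\Ric\ge0$ and bounded curvature has $\AVR=0$ (Perelman; in dimension 4 with nonnegative isotropic curvature, via Hamilton's/Brendle's Harnack). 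This contradiction yields the claim, with $D_1$ depending only on $\mu,v_0$, since the blow-up argument is insensitive to $\eta$ once the cone implies nonnegative Ricci.

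With the $D_1/t$ bound in hand, the modified flow $h(t)$ moves distances by at most $C\sqrt{D_1 t}$. For $T(\eta,\mu,v_0)$ small, the region $B_{g_0}(p,s_0-2)$ therefore remains in the set where $h=g_0$, at controlled distance from the modified region. Restricting $h(t)$ to this set gives the desired flow with $g(0)=g_0$. A Lai-type iteration can be used if a single step loses too much radius, but $s_0\ge4$ leaves a full unit of room, so one step should suffice.

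The main obstacle is the preservation of the cone on a \emph{local}, incomplete region. Hamilton's argument is for compact manifolds, and at points near the conformally modified zone the curvature need not lie in $\mathfrak{C}_{\eta,\mu}$. I would first try to establish a local almost-preservation: if $\Rm\in\mathfrak{C}_{\eta,\mu}$ initially and $|\Rm|\le D/t$, then $\Rm(t)$ lies within distance $\varepsilon/t$ of the cone on slightly smaller balls. To do this I would localise Hamilton's ODE-maximum principle with a cutoff function and the $D/t$ curvature bound, as in Lee--Tam's local PIC1 preservation, applying it to a distance function from $\Rm$ to the cone. This must be combined with the contradiction argument in a bootstrap, taking the first time either the curvature bound or the almost-cone condition fails.
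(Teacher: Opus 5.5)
\textbf{The gap: ``one step should suffice.''} You correctly identify the two ingredients. One is local almost-preservation of $\mathfrak{C}_{\eta,\mu}$ by a cutoff maximum principle, which is the paper's Theorem~\ref{thm t power k}. The other is a blow-up contradiction with a noncollapsed ancient solution, which is Lemma~\ref{lma pseudo.}. The problem is how you assemble them.

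Shi's theorem applied to a conformal completion $h$ only guarantees a flow for time $\sim(\sup|\Rm_h|)^{-1}$. But $\sup|\Rm_h|\ge\sup|\Rm_{g_0}|$ on the unmodified region, and that is not controlled by $(\eta,\mu,v_0)$. In the modified collar, $h$ lies in no cone and obeys no a priori estimate. Pseudolocality there only works at the curvature scale of $h$. So nothing prevents $h(t)$ from becoming singular in the collar at such a time, which ends the flow everywhere. Your a priori estimate on the good region does not lengthen the existence time. Even granting existence, a continuous bootstrap on a fixed region does not close in one go: Theorem~\ref{thm t power k} and Lemma~\ref{lma pseudo.} need the $a/t$ bound on all of $[0,t]$ on a ball larger by $\sim L\sqrt{t}$ than the one where they conclude.

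\textbf{The missing idea is Hochard's iterative restart, which is what the paper does.}
\begin{enumerate}
\item Suppose $|\Rm|\le a/t$ on $B_{g_0}(p,r_k)\times(0,t_k]$.
\item Theorem~\ref{thm t power k}, rescaled to scale $\sqrt{t_k}$, gives $\Rm+\frac{t}{\delta_1^4t_k^2}\mathcal{I}\in\mathfrak{C}_{\eta,\mu}$ on $B_{g(t)}(x,\delta_1\sqrt{t_k})$.
\item Lemma~\ref{lma pseudo.} with $r=\delta_1\sqrt{t_k}$ and $K=1$ improves this to $|\Rm|\le C_0/t$, with $C_0(\mu,v_0)$ independent of $a$.
\item Proposition~\ref{fact local RF} with $\rho=\sqrt{C_0^{-1}t_k}$ restarts the flow up to $t_{k+1}=(1+\nu)^2t_k$. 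Choosing $a:=\Lambda C_0(1+\nu)^2$ closes the loop.
\item Radii shrink by $2L\sqrt{t_k}$, a geometric series. One stops once a unit of radius is spent or $\delta_1\sqrt{t_k}>1$, which yields $T(\eta,\mu,v_0)$.
\end{enumerate}
So the iteration is what makes the existence time uniform. The radius loss is its price, not its purpose.

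\textbf{Two further corrections.}

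First, the form of almost-preservation you target is too weak. Being ``within $\varepsilon/t$ of the cone'' only gives $\Ric\ge-3\varepsilon/t$, which is not integrable at $t=0$. The volume-propagation input (Lemma~\ref{fact lower vol. ctrl.}) and the unit-scale Bishop--Gromov step in the proof of Lemma~\ref{lma pseudo.} use a time-uniform bound $\Rm+Kr^{-2}\mathcal{I}\in\mathfrak{C}_{\eta,\mu}$. The barrier $\Phi\Rm+t^k\mathcal{I}$ in Theorem~\ref{thm t power k} delivers exactly this.

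Second, your contradiction step cites a result that is not available. A nonflat bounded-curvature ancient solution with only $\Ric\ge0$, or only nonnegative isotropic curvature, is not known to have $\AVR=0$. Hamilton's Harnack needs nonnegative curvature operator, and Brendle's needs PIC2. What the cone actually gives is uniform PIC:
\[
0<\max\{A_3,B_3,C_3\}\le C(\mu)\min\{A_1+A_2,C_1+C_2\}.
\]
Strict positivity comes from Lemma~\ref{lma null vector}(i) and the strong maximum principle. By \cite{CL23MathAnn}, uniformly PIC ancient solutions have nonnegative curvature operator. Only then does \cite[Proposition 11.4]{Perelman02arXiv} give the contradiction; this is Lemma~\ref{lma ancient sol.}.
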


Then by a limiting argument, we obtain a short-time existence result for Ricci flow globally starting from a Riemannian metric possibly with unbounded curvature, see Corollary \ref{cor global exist.0}. With the help of this Ricci flow, we can show the topology of $4$-dimensional complete noncompact Riemannian manifold with this curvature condition when $\eta\in[0,1]$ is trivial if it is also of maximal volume growth.

\begin{theorem}[Theorem \ref{thm diff. R4}] \label{app1-int}
    Let $(M,g_0)$ be a complete noncompact $4$-dimensional Riemannian manifold with
            $\Rm_{g_0}\in\mathfrak{C}_{\eta,\mu}$ for some $\eta\in[0,1]$ and
            $\mu\ge\eta+1$ with $\mu>1$.
    If $(M,g_0)$ has maximal volume growth, then $M$ is diffeomorphic to
            $\mathbb{R}^4$.
\end{theorem}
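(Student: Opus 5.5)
The plan follows the standard strategy for maximal-volume-growth gap theorems via Ricci flow, as used for nonnegative complex sectional curvature and weakly PIC1. Run the global flow starting from $g_0$, show that its blow-downs converge to an expanding soliton that is smooth and asymptotic to a cone, and read off the topology from the flow.

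\textbf{Step 1: a long-time flow with good bounds.} Maximal volume growth with $\AVR(g_0)=v>0$ gives, after any parabolic rescaling $g_0\mapsto \lambda^{-2}g_0$, the bound $\VolB(x,1)\ge v_0(v)$ at every point, since Bishop--Gromov applies because $\eta\le 1$ implies $\Ric\ge0$. The cone condition is scale invariant. So Theorem \ref{thm1-int} and its global version, Corollary \ref{cor global exist.0}, apply to every rescaling with uniform constants $T$ and $D_1$. Undoing the scaling gives a complete flow $g(t)$ on $M\times[0,\infty)$ with
\[
|\Rm|_{g(t)}\le \frac{D_1}{t}, \qquad \Rm_{g(t)}\in\mathfrak{C}_{\eta,\mu}.
\]
Here the preservation of the cone comes from the noncompact maximum principle established with the existence theory. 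In addition, $\AVR(g(t))=v$ for all $t$, and $\Ric\ge0$ along the flow.

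\textbf{Step 2: topology from the flow.} Since $\Ric\ge 0$ and $|\Rm|\le D_1/t$, we get distance distortion bounds of the form $d_{g(t)}\ge d_{g_0}-C\sqrt t$. We also get injectivity radius bounds $\Inj_{g(t)}\ge c\sqrt t$, from $|\Rm|\le D_1/t$ together with the volume lower bound via Cheeger--Gromov--Taylor. Fix $t=1$; $(M,g(1))$ has bounded curvature and $\AVR>0$. Now run a Hamilton--Perelman style argument. For the blow-down $g_j(t)=\lambda_j^{-1}g(\lambda_j t)$, Hamilton's compactness gives a subsequence converging to a flow $g_\infty(t)$, $t>0$, which is an expanding gradient Ricci soliton with $\Rm\in\mathfrak{C}_{\eta,\mu}$. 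This uses the monotonicity of $\AVR$ (a Schulze--Simon type argument) together with the fact that the cone implies $\Ric\ge0$. A complete expander with positive AVR and $\Ric\ge 0$ is diffeomorphic to $\mathbb{R}^4$, since the potential function is proper and strictly convex and is a Morse function with one critical point.

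The harder direction is transferring this diffeomorphism type back to $M$. Here I would instead use the simpler route available for the flow itself. At a fixed time $t_0$ large, consider the ball $B_{g(t_0)}(p,R\sqrt{t_0})$, which is close to a ball in the expander. Through the time-dependent diffeomorphisms, every compact subset of $M$ then embeds into an exhaustion by sets diffeomorphic to balls in $\mathbb{R}^4$. Concretely, $M$ is an increasing union of open sets each diffeomorphic to $\mathbb{R}^4$ (or to $4$-balls). By the classical result of Brown ("a monotone union of open $n$-cells is an open $n$-cell"), $M$ is diffeomorphic to $\mathbb{R}^4$, smoothness holding in dimension $4$ in the smooth category via the analogous smooth statement.

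\textbf{Main obstacle.} The key difficulty is producing the exhaustion uniformly. We must show that for each compact $K\subset M$ there is a time $t$ such that $K$ lies in a $g(t)$-ball of radius comparable to $\sqrt t$ that is diffeomorphic to a ball. This needs uniform control of the ball's "shape" at scale $\sqrt t$. I would obtain it by contradiction with the blow-down compactness: uniform closeness to some expander $g_\infty$ with $\Rm\in\mathfrak{C}_{\eta,\mu}$, plus the fact that in every such expander, sublevel sets of the potential are balls. Then $K\subset B_{g_0}(p,r)\subset B_{g(t)}(p,r+C\sqrt t)$ by the shrinking-distance estimate. An alternative if this fails is the approach of \cite{HL21JGA}: pull back a fixed smooth exhaustion via a local diffeomorphism argument using $d_{g(t)}$ convexity near infinity.
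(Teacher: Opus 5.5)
\textbf{Step 1.} Your Step 1 matches the paper's construction except for two details. First, Corollary \ref{cor global exist.0} needs $\eta>0$, so the case $\eta=0$ must be reduced via $\mathfrak{C}_{0,\mu}\subset\mathfrak{C}_{\varepsilon,\mu}$. Second, the flow on $[0,\infty)$ does not come from ``undoing the scaling''. It is a subsequential limit of the flows on $[0,TR^2]$, obtained by the argument of \cite{CHL24AnnPDE}. The paper then feeds this long-time flow, with $|\Rm|\le D_1/t$, the injectivity radius bound and $\Rm_{g(t)}\in\mathfrak{C}_{\eta,\mu}$, into \cite[Theorem 1.1]{HP25BLMS}.

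\textbf{The gap.} The gap is in your transfer step. You derived $d_{g(t)}\ge d_{g_0}-C\sqrt t$, but that only gives $B_{g(t)}(p,\rho)\subset B_{g_0}(p,\rho+C\sqrt t)$. It does not give the inclusion $B_{g_0}(p,r)\subset B_{g(t)}(p,r+C\sqrt t)$ that you use. Even granting that inclusion, a ball of radius $r+C\sqrt t$ is not known to be a topological ball. Here $C$ comes from the shrinking balls lemma and may be much larger than the Cheeger--Gromov--Taylor constant $c$. This is what pushes you into the expander detour, and that detour is unsupported as written. The claim that blow-down limits are expanders is not justified by $\AVR$ monotonicity and $\Ric\ge0$. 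In this setting it needs the Chan--Lee--Peachey argument behind Theorem \ref{thm soliton}, which uses 2-nonnegative flag curvature and requires showing the limit flow comes out of a tangent cone at infinity. The ``uniform closeness'' contradiction argument is also left unspecified.

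\textbf{The fix.} The obstacle you flag is not real, because your Step 2 already contains what is needed. The cone is preserved and $\eta\le1$, so Lemma \ref{lma 2-non flag} gives $\Ric_{g(t)}\ge0$. Hence $d_{g(t)}\le d_{g_0}$ and $B_{g_0}(p,r)\subset B_{g(t)}(p,r)$ for all $t$. The bound $\Inj_{g(t)}(p)\ge c\sqrt t$ says $\exp_p$ maps the Euclidean ball of radius $c\sqrt t$ diffeomorphically onto $U(t):=B_{g(t)}(p,c\sqrt t)$. Choose $t_i\to\infty$ inductively as follows. Given $t_i$, pick $\rho_i$ with $\overline{U(t_i)}\subset B_{g_0}(p,\rho_i)$; this is possible since $\overline{U(t_i)}$ is compact. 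Then take $c\sqrt{t_{i+1}}>\max\{i+1,\rho_i\}$. This gives $\overline{U(t_i)}\subset B_{g_0}(p,\rho_i)\subset B_{g(t_{i+1})}(p,\rho_i)\subset U(t_{i+1})$ and $B_{g_0}(p,i+1)\subset U(t_{i+1})$. So $M$ is a nested union of open sets, each diffeomorphic to $\mathbb{R}^4$. The smooth monotone-union statement, via the Palais--Cerf disc theorem, gives $M\cong\mathbb{R}^4$. No blow-down, expander or $\AVR$ monotonicity is needed.
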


Note that a complete noncompact Riemannian manifold with nonnegative Ricci curvature and  maximal volume growth may have non-trivial topology, for example Eguchi-Hanson metric. It is interesting to see how to strengthen the curvature condition such that the topology of a complete noncompact manifold is trivial. This is one of the motivations of this work. Another motivation is on the geometric gap theorem. Inspired by the work \cite{CLP24arXiv} by Chan, Lee and Peachey, we observe that if a Ricci flow coming out of a metric cone has curvature operator in $\mathfrak{C}_{\eta,\mu}$ and maximal volume growth, then it must be an expanding Ricci soliton. On the other hand, if $\eta\in[0,\frac{9}{16})$, we observe that the metric is Ricci pinched. Combining this with the expander structure, we obtain the following   geometric gap theorem.

\begin{theorem}[Corollary \ref{cor flat}] \label{app2-int}
    Suppose $(M,g_0)$ is a  $4$-dimensional complete noncompact Riemannian manifold with maximal volume growth and $\Rm_{g_0}\in\mathfrak{C}_{\eta,\mu}$ for some
            $\eta\in[0,\frac{9}{16})$ and $\mu\ge\eta+1$ with $\mu>1$.
    Then $(M,g_0)$ is isometric to flat Euclidean space.
\end{theorem}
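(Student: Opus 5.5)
The plan is to run the Ricci flow from $g_0$, blow it down to a flow coming out of the asymptotic cone, show that this limiting flow is an expanding gradient soliton whose curvature is Ricci pinched, and then use pinching on the expander to force flatness.

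\textbf{Step 1 (long-time flow and blow-down).} By Corollary \ref{cor global exist.0} there is a complete Ricci flow $g(t)$ with $g(0)=g_0$ and $\Rm_{g(t)}\in\mathfrak{C}_{\eta,\mu}$. Since $\eta\le1$, this cone gives $\Ric\ge0$. Maximal volume growth supplies a uniform lower bound $\VolB_{g(t)}(x,r)\ge v r^4$, which is preserved along the flow with $\Ric\ge0$ via the local estimates. Because the constants in Theorem \ref{thm1-int} scale, the flow exists for all $t\in[0,\infty)$ with $|\Rm|_{g(t)}\le D/t$.

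Next I consider the rescalings $g_i(t)=\lambda_i^{-1}g(\lambda_i t)$ with $\lambda_i\to\infty$. These satisfy the same $D/t$ bound and the same non-collapsing, so Hamilton compactness gives a limit flow $\tilde g(t)$, $t>0$. This limit comes out of the asymptotic cone $C(X)$ of $(M,g_0)$, whose existence and uniqueness as a metric cone follow from $\Ric\ge0$ and maximal volume growth. The limit still has $\Rm\in\mathfrak{C}_{\eta,\mu}$, since the cone is closed.

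\textbf{Step 2 (expander structure).} Following \cite{CLP24arXiv}, I would show that the flow coming out of the cone $C(X)$ is self-similar. Scale invariance of the cone, together with a uniqueness statement for flows out of cones within this curvature class, would give a gradient expanding soliton $\Ric+\nabla^2 f=-\tfrac{1}{2t}g$. This uniqueness is the main obstacle. To get it, I would first try the Chan–Lee–Peachey argument, which uses only the $D/t$ bound and $\Ric\ge0$. Alternatively, one can bypass uniqueness: Perelman's reduced volume, or an entropy monotone along the flow, is constant for a flow from a cone, and rigidity then yields the soliton.

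\textbf{Step 3 (Ricci pinching).} I would verify algebraically that $\eta<9/16$ together with $\mu\ge\eta+1$ forces $\Ric\ge\varepsilon R\,g$ for some $\varepsilon=\varepsilon(\eta)>0$. The relevant identity is that in dimension $4$ the Ricci form at a unit vector is expressed through $\tr A$, $\tr C$ and $B$. The condition $(B_2+B_3)^2\le\eta(A_1+A_2)(C_1+C_2)$ combined with AM–GM bounds $B$ by $\tfrac{3}{4}\sqrt{\eta}\,(\tr A+\tr C)$-type quantities, and the threshold $\sqrt{\eta}<3/4$ gives strict positivity relative to $R$.

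\textbf{Step 4 (flatness).} On a complete expander with Ricci pinching $\Ric\ge\varepsilon Rg$ and nonnegative curvature, known rigidity results (for example those of Deruelle–Schulze–Simon, or of Lee–Topping, on pinched expanders) give flatness. Thus $\tilde g$ is flat, and the asymptotic cone is $\mathbb{R}^4$, so the asymptotic volume ratio $\AVR(g_0)$ equals that of Euclidean space. Bishop–Gromov with $\Ric\ge0$ then forces $(M,g_0)$ to be isometric to $\mathbb{R}^4$.
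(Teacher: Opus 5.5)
\textbf{There is a genuine gap in Step 2, which is the heart of the argument.} You reduce self-similarity to a uniqueness statement for flows out of cones, and you assert that the Chan--Lee--Peachey argument uses only the $D/t$ bound and $\Ric\ge0$. Neither is right.

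The argument of \cite{CLP24arXiv} does not go through uniqueness. It evolves square-distance-like approximations of $\frac14 d_0^2$ by the heat equation to get functions $v_i$. It then shows, by a maximum principle, that the soliton defect $S_i=-2t\Ric-g+2\nabla^2 v_i$ tends to zero. The key inequality $\Box\varphi_i\le\mathcal{R}\varphi_i$, for $\varphi_i$ the negative part of the lowest eigenvalue of $S_i\owedge g$, needs a curvature sign beyond $\Ric\ge0$: namely $R_{1i1i}+R_{2i2i}\ge0$ for $i\ge3$, which they get from weakly PIC1. With only $\Ric\ge0$ the reaction term has no sign, and the argument does not close.

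Your fallback (constancy of reduced volume or an expanding entropy for a flow from a cone) is also unsupported. Scale invariance of the initial cone does not make the flow scale invariant, and bridging that is exactly the uniqueness you do not have.

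The missing observation is Lemma \ref{lma 2-non flag}: for $\eta\le1$, $\mathfrak{C}_{\eta,\mu}$ implies 2-nonnegative flag curvature. This is precisely the input of \cite[Lemma 3.2]{CLP24arXiv}. Since the cone is preserved along each flow (Corollary \ref{cor preseve.}) and passes to the blow-down limit, this yields Theorem \ref{thm soliton} with no uniqueness statement at all.

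\textbf{Step 4 is also incomplete as stated.} The rigidity results you point to are three-dimensional or assume PIC1 pinching, and in dimension $4$ Ricci pinching says nothing about the Weyl part. The paper instead argues on the expander $(M_\infty,g_\infty(1),\nabla u)$ as follows.
\begin{enumerate}
\item Ricci pinching together with $\nabla\mathcal{R}=-2\Ric(\nabla u)$ forces exponential decay of $\mathcal{R}$, as in \cite{Deruelle17ASNSP}.
\item The bound $|\Rm|\le C\mathcal{R}$ upgrades this to decay of the full curvature. It follows from 2-nonnegative flag curvature \cite[Lemma A.2]{LT25GT}, or directly from the cone inequalities.
\item Then \cite[Theorem 2]{GPZ94IMRN} and \cite[Proposition 2.4]{Deruelle17ASNSP} give $\AVR_{g_\infty(1)}=1$.
\item Finally $\AVR_{g_0}=\AVR_{g_\infty(1)}$, by preservation of the asymptotic volume ratio along the flows and volume convergence. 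Bishop--Gromov then finishes as you say.
\end{enumerate}

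Two smaller points. Tangent cones at infinity under $\Ric\ge0$ and maximal volume growth need not be unique, but you only need a subsequential cone. For $\eta=0$ you must first use $\mathfrak{C}_{0,\mu}\subset\mathfrak{C}_{\varepsilon,\mu}$, because Corollary \ref{cor global exist.0} requires $\eta>0$.

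Your Steps 1 and 3 otherwise agree with the paper. Compare Lemma \ref{lma Ricci pinched}, where $|\mathring{\Rc}|=2|B|\le\frac{\sqrt{\eta}}{3}\mathcal{R}$.
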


Next, inspired by the work \cite{BCW19Inv} by Bamler, Cabezas-Rivas and Wilking, we consider manifolds with a lower bound with respect to $\mathfrak{C}_{\eta,\mu}$. We obtain the key differential inequality for the lower bound $l$ with respect to $\mathfrak{C}_{\eta,\mu}$, see Lemma \ref{lma evol. l}. With this key lemma, following the argument in \cite{BCW19Inv}, we obtain the following pseudo-locality theorem for the curvature cone $\mathfrak{C}_{\eta,\mu}$. 

\begin{theorem}[Theorem \ref{thm bcw thm1}]
    For any $v_0>0$, $\alpha_0\in[0,1]$, $\eta\in(0,1]$ and $\mu\ge\eta+1$,
            there exist $\hat{T}(\mu,v_0)>0$ and $D_1(\mu,v_0)>0$ such that the
            following holds.
    Let $(M,g(t)),t\in[0,T)$ be a smooth $4$-dimensional complete Ricci flow
            with \emph{bounded curvature} satisfying
    \[
        \VolB_{g(0)}(p,1)\ge v_0\text{ for all }p\in M\text{ and }
                \Rm_{g(0)}+\alpha_0\mathcal{I}_{g(0)}\in\mathfrak{C}_{\eta,\mu}.
    \]
    Then for all $t\in(0,T\wedge\hat{T}]$, we have
    \[
        \Rm_{g(t)}+D_1\alpha_0\mathcal{I}_{g(t)}\in\mathfrak{C}_{\eta,\mu}
                \text{ and }\sup\limits_{M}|\Rm|_{g(t)}\le\frac{D_1}{t}.
    \]
\end{theorem}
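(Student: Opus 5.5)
The argument follows the scheme of Bamler--Cabezas-Rivas--Wilking \cite{BCW19Inv}, driven by the differential inequality for the lower bound $l$ in Lemma \ref{lma evol. l}. For a curvature operator $\Rm$ let
\[
l(x,t):=\inf\{\alpha\ge0:\ \Rm_{g(t)}(x)+\alpha\mathcal{I}_{g(t)}\in\mathfrak{C}_{\eta,\mu}\},
\]
which is locally Lipschitz. Lemma \ref{lma evol. l} should give, in the barrier (or viscosity) sense,
\[
\Boxg{g(t)} l\le C(\mu)\,|\Rm|\,l+C(\mu)\,l^2 .
\]
Since the flow has bounded curvature, the maximum principle applies on the complete manifold.

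The proof is a continuity/contradiction argument on a maximal time interval. Fix a large $D_1(\mu,v_0)$ and a small $\hat T$ to be chosen. Let $t_0\le T\wedge\hat T$ be maximal such that on $(0,t_0]$ both
\[
|\Rm|_{g(t)}\le \frac{D_1}{t}\qquad\text{and}\qquad l\le D_1\alpha_0 .
\]
Bounded curvature makes both conditions hold for tiny $t$, so $t_0>0$. Each bound is then improved separately.

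\emph{Step 1: improving $l$.} Plug $|\Rm|\le D_1/t$ into the inequality for $l$. This gives $\Boxg{g(t)} l\le C D_1 l/t+Cl^2$, which is too singular to integrate naively. Following BCW, use the better estimate $\Boxg{g(t)} l\le C\,|\Rm|\,l$ with a weight. Compare $l$ against the ODE solution
\[
\phi(t)=\alpha_0\exp\Bigl(C\int_0^t\sup|\Rm|\Bigr).
\]
This is not directly integrable either, so one needs the stronger structural fact, which I expect Lemma \ref{lma evol. l} to provide: the reaction term is controlled by $C\,l\,(l+\text{scalar-type quantity})$, together with an integral bound such as $\int_0^t\sup|\Rm|\le C\log$-type control, or a bound on $t^{-1}$ times a small constant when $|\Rm|\le\varepsilon/t$. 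If a lemma of this form is not available, I will fall back on BCW's two-scale argument. On the interval where $|\Rm|\le\varepsilon_0/t$ with $C\varepsilon_0<1/2$, the maximum principle yields $l\le\alpha_0 (t/s)^{1/2}$-type growth, which stays bounded by $D_1\alpha_0/2$ on short times.

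\emph{Step 2: improving the curvature bound, by contradiction and blow-up.} This is the main obstacle. Suppose the bound $|\Rm|\le D_1/t$ fails at the first time $t_0$ for a sequence of flows, with $D_1\to\infty$. Point-pick as in Theorem \ref{thm1-int} (or \cite{BCW19Inv}) to get points $(x_k,t_k)$ with $t_k|\Rm|(x_k,t_k)\to\infty$ and curvature controlled on backward parabolic neighbourhoods. Rescale so that $|\Rm|(x_k,t_k)=1$. Under this rescaling, $l$ scales by the same factor, so the rescaled lower bound is at most $D_1\alpha_0\,|\Rm|^{-1}(x_k,t_k)\to0$ on the relevant region, because $\alpha_0\le1$ and $t_k\le\hat T$. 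Volume non-collapsing persists for $t\lesssim \hat T$, via the Ricci lower bound implied by $\mathfrak{C}_{\eta,\mu}$ when $\eta\le1$, together with the expanding-ball/volume estimates of Theorem \ref{thm1-int}'s proof.

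Passing to a limit gives a complete, $\kappa$-noncollapsed, nonflat ancient (or long-time) flow with bounded curvature, $\Rm\in\mathfrak{C}_{\eta,\mu}$, and $t|\Rm|$ unbounded backward. Since $\mathfrak{C}_{\eta,\mu}$ implies nonnegative isotropic curvature and $\Ric\ge0$, the asymptotic-volume argument of \cite{BCW19Inv} applies: noncollapsing forces positive asymptotic volume ratio, and a blow-down yields a contradiction with the curvature normalization. This is the same mechanism used for Theorem \ref{thm1-int}. So for $D_1$ large and $\hat T$ small the curvature bound improves to $D_1/(2t)$ and $l\le D_1\alpha_0/2$. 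Hence $t_0=T\wedge\hat T$, completing the proof.
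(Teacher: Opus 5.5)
Your Step~2 follows the same outline as the paper's blow-up. Step~1, however, is the heart of the theorem, and it has a genuine gap.

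Once you weaken Lemma~\ref{lma evol. l} to $\Boxg{g(t)}l\le C|\Rm|\,l+Cl^2$, no maximum-principle or ODE comparison can give a uniform bound. With $|\Rm|_{g(t)}\le D_1/t$, the comparison function grows like $(t/s)^{CD_1}$ from any starting time $s$. This diverges as $s\to0$, so it cannot be tied to $l(\cdot,0)\le\alpha_0$.

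Your fallback does not repair this. The blow-up gives some constant $D_1(\mu,v_0)$ with no smallness, so $|\Rm|\le\varepsilon_0/t$ is not available. Even if it were, $(t/s)^{C\varepsilon_0}$ still blows up as $s\to0$. Using the bounded curvature of the given flow near $t=0$ only produces constants depending on that uncontrolled bound.

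The missing idea is the heat-kernel argument of \cite{BCW19Inv}, which uses the exact coefficient in $\Boxg{g(t)}l\le\mathcal{R}\,l+6l^2$:
\begin{itemize}
\item Under an a priori bound $l\le1$, the function $u:=e^{-6t}l$ satisfies $\Boxg{g(t)}u\le\mathcal{R}\,u$.
\item Since $\partial_t\,d\mu_{g(t)}=-\mathcal{R}\,d\mu_{g(t)}$, the adjoint of $\partial_t-\Delta-\mathcal{R}$ (with respect to $d\mu_{g(t)}\,dt$) is the backward heat operator.
\item So if $K$ is the fundamental solution of $\partial_t-\Delta-\mathcal{R}$, then $s\mapsto\int K(x,t;y,s)\,u(y,s)\,d\mu_{g(s)}(y)$ is nonincreasing. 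This gives $l(x,t)\le e^{6t}\alpha_0\int K(x,t;y,0)\,d\mu_{g(0)}(y)$.
\item Under $|\Rm|_{g(t)}\le D_1/t$ and volume non-collapsing, $K$ has Gaussian upper bounds, so the last integral is bounded by $C(D_1,v_0)$.
\end{itemize}
Replacing $\mathcal{R}$ by $C|\Rm|$ throws away exactly the cancellation with the volume form that makes this work.

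You also need to decouple the constants as the paper does.
\begin{itemize}
\item Rescale so that $\alpha_0\le\varepsilon_0$, and take as bootstrap hypotheses only $l\le1$ and a uniform lower bound for unit-ball volumes.
\item The blow-up then gives $|\Rm|_{g(t)}\le D_1/t$ with $D_1(\mu,v_0)$ independent of $\alpha_0$, since the rescaled lower bound is at most $Q_k^{-1}\to0$.
\item The heat-kernel step gives $l\le D_2\alpha_0\le D_2\varepsilon_0\le\frac12$.
\item The volume bound is almost preserved for a uniform time by \cite[Corollary 6.2]{Simon12}.
\item The final constant is $\max\{D_1,D_2\}$.
\end{itemize}

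In your scheme the same $D_1$ appears in $l\le D_1\alpha_0$ and in the curvature bound on which the heat-kernel constant depends. So the claimed improvement to $l\le D_1\alpha_0/2$ is circular as stated. Moreover, in your blow-up the rescaled lower bound is only $D_1\alpha_0/Q_k\le\alpha_0\bar t_k$, because $Q_k\ge D_1/\bar t_k$ at the picked point. This tends to $0$ only if $\hat T\to0$ along the sequence.

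Finally, nonnegative isotropic curvature together with $\Ric\ge0$ is not what rules out the limit. You need Lemma~\ref{lma ancient sol.}: the pinching in $\mathfrak{C}_{\eta,\mu}$ makes a non-flat bounded-curvature ancient solution uniformly PIC. Hence it has nonnegative curvature operator, and hence zero asymptotic volume ratio by Perelman, contradicting Euclidean volume growth. To make $D_1$ and $\hat T$ independent of $\eta$, let $\eta_k$ vary along the contradiction sequence and pass to a limit $\eta_\infty\in[0,\min\{1,\mu-1\}]$, which Lemma~\ref{lma ancient sol.} still covers.
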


Then we are able to obtain two geometric applications as in \cite{BCW19Inv} for compact volume non-collapsed manifolds with a lower bound with respect to $\mathfrak{C}_{\eta,\mu}$ via the above Ricci flow pseudo-locality theorem, see Corollary \ref{cor BCW1} and Corollary \ref{cor BCW2}. 

Note that lower bound of $\mathfrak{C}_{\eta,\mu}$ may not imply lower bound of sectional curvature. However, we could apply the technique developed by Lai in \cite{Lai19AdvMath} to obtain the following short-time existence result for Ricci flow locally. 

\begin{theorem}[Theorem \ref{thm local exist.1}] \label{thm2-int}
    For any $\alpha_0\in(0,1]$, $v_0>0$, $\eta\in(0,1]$ and $\mu\ge\eta+1$,
            there exist $T(\eta,\mu,\alpha_0,v_0)>0$, $D_1(\mu,v_0)>0$ and
            $D_2(\eta,\mu,v_0)>0$ such that the following holds.
    Let $(M,g_0)$ be a $4$-dimensional Riemannian manifold.
    Suppose $B_{g_0}(p,s_0)\subset\subset M$ for some $p\in M$ and $s_0\ge4$
            such that
    \[\left\{\;\begin{aligned}
        &\Rm_{g_0}+\alpha_0\mathcal{I}_{g_0}\in\mathfrak{C}_{\eta,\mu}
                \text{ on }B_{g_0}(p,s_0); \\
        &\VolB_{g_0}(x,1)\ge v_0\text{ for all }x\in B_{g_0}(p,s_0-1).
    \end{aligned}\right.\]
    Then there exists a Ricci flow $g(t)$ on $B_{g_0}(p,s_0-2)$
            for $t\in[0,T]$ with $g(0)=g_0$ such that
    \[\left\{\;\begin{aligned}
        &\sup\limits_{B_{g_0}(p,s_0-2)}|\Rm|_{g(t)}\le\frac{D_1}{t}\text{ for all }t\in(0,T]; \\
        &\Rm_{g(t)}+D_2\alpha_0\mathcal{I}_{g(t)}\in\mathfrak{C}_{\eta,\mu}.
    \end{aligned}\right.\]
\end{theorem}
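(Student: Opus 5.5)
The plan is to follow Lai's scheme for local Ricci flow under a lower curvature bound with respect to an invariant cone. The key inputs are Theorem \ref{thm bcw thm1} (pseudo-locality for $\mathfrak{C}_{\eta,\mu}$) and the differential inequality for the lower bound $l$ from Lemma \ref{lma evol. l}. The difficulty is that $g_0$ is only defined on $B_{g_0}(p,s_0)$ with possibly unbounded curvature. So we cannot apply the complete bounded-curvature pseudo-locality directly; instead we build the flow by an iterative conformal-change and extension procedure.

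\textbf{Step 1: complete the metric.} Rescale if necessary so that $\alpha_0\le 1$ controls the initial data. Following Hochard and Lai, replace $g_0$ on $B_{g_0}(p,s_0-1)$ by a conformal metric $h=e^{2f}g_0$. The factor $f$ blows up at the boundary, so that $h$ is complete with bounded curvature. On the region where $f=0$ we keep $\Rm_h+\alpha_0\mathcal{I}_h\in\mathfrak{C}_{\eta,\mu}$ and $\VolB_h(x,1)\ge v_0$. Near the boundary only weaker bounds hold. These are controlled by choosing the conformal factor at scale $r\sim|\Rm|^{-1/2}$, using Shi's short-time existence for complete bounded-curvature metrics.

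\textbf{Step 2: localized pseudo-locality.} Run the Ricci flow from $h$. The goal is to show that in the interior region, for $t\le T$, two properties hold: $|\Rm|\le D_1/t$ and $\Rm+D_2\alpha_0\mathcal{I}\in\mathfrak{C}_{\eta,\mu}$. This is a local version of Theorem \ref{thm bcw thm1}, proved by contradiction and blow-up as in \cite{BCW19Inv}. Suppose $t|\Rm|$ first exceeds $D_1$ at some point. Rescale and pass to a limit using the noncollapsing. The limit would be an ancient-type flow, or a flow from a cone, with curvature in $\mathfrak{C}_{\eta,\mu}$, nonflat, and with $t|\Rm|$ bounded. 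This is ruled out by the volume lower bound together with the rigidity results already obtained, namely the asymptotic volume ratio and the expander structure.

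For the cone condition we use a localized maximum principle for $l$, the least $\lambda$ with $\Rm+\lambda\mathcal{I}\in\mathfrak{C}_{\eta,\mu}$. Lemma \ref{lma evol. l} gives roughly $\partial_t l\le\Delta l+C(\mu)|\Rm|\,l$ in the barrier sense. With $|\Rm|\le D_1/t$ this would only give $l\lesssim t^{-CD_1}$, which is useless. So, as in BCW, we exploit the precise structure: the reaction term is bounded by $C l^2+ C|\Rm|l$ with the good sign from Hamilton's preserved conditions. Combined with a cutoff function (Perelman/Simon–Topping distance distortion estimates $d_t\ge d_0-C\sqrt{D_1t}$), this yields $l\le D_2\alpha_0$ for $t\le T(\eta,\mu,\alpha_0,v_0)$. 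Here $D_2$ depends on $\eta$ through the constant in the lemma.

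\textbf{Step 3: iteration.} Lai's iteration extends the flow step by step. At time $t_k$ the curvature estimate gives $|\Rm|\le D_1/t_k$ on a slightly smaller ball. We conformally modify again and restart, with the new initial lower bound $D_2\alpha_0$ in place of $\alpha_0$. The times grow geometrically while the radii shrink summably, from $s_0-1$ down to $s_0-2$. The crucial point is that the cone lower bound does not deteriorate under iteration. This is guaranteed because the estimate in Step 2 is applied on the whole accumulated time interval from the original data, not restarted, while the flows agree on overlaps. The volume bound is propagated by the curvature bound and Bishop–Gromov-type arguments for the flow.

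\textbf{Main obstacle.} I expect Step 2 to be the main obstacle. First, $\mathfrak{C}_{\eta,\mu}$ lower bounds do not control sectional curvature, so the standard volume-comparison arguments must go through Ricci: $\eta\le1$ gives $\Ric\ge -C\alpha_0$. Second, the cutoff argument for $l$ must be compatible with the nonlinear reaction term, since $l$ is only Lipschitz. I would first try the BCW trick of estimating $l$ through the scalar quantity $l\cdot e^{-Ct}$ together with a barrier $\phi(d_t(p,x)+C\sqrt{t})$.
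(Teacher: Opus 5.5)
Your overall architecture is the one the paper uses: conformal completion plus Shi's flow, and a Lai-type iteration in which $|\Rm|\le D_1/t$ and $l\le D_2\alpha_0$ bootstrap each other while the radii shrink from $s_0-1$ to $s_0-2$. However, the step that actually produces $\Rm+D_2\alpha_0\mathcal{I}\in\mathfrak{C}_{\eta,\mu}$ fails as you describe it. You correctly note that a maximum principle for $(\partial_t-\Delta)l\le C|\Rm|\,l$ with $|\Rm|\le D_1/t$ is useless, since $D_1/t$ is not integrable at $t=0$. Your remedy is a cutoff maximum principle with reaction term $Cl^2+C|\Rm|\,l$, but it has exactly the same non-integrable linear coefficient. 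Neither the cutoff $\phi(d_t(p,x)+C\sqrt t)$, nor the factor $e^{-Ct}$, nor Hamilton's null-vector condition (already encoded in Lemma~\ref{lma evol. l}) changes that.

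The missing idea is that the linear coefficient in \eqref{eqn box l} is \emph{exactly} the scalar curvature. It therefore cancels against $\partial_s\,d\mu_s=-\mathcal{R}\,d\mu_s$ when you pair $l$ with the heat kernel $G(x,t;y,s)$, which solves the conjugate heat equation in $(y,s)$. The quantity $s\mapsto\int G(x,t;y,s)\,l(y,s)\,d\mu_s(y)$ then only sees the $6l^2$ term. That term is absorbed by passing to $\mathcal{L}=e^{-Ct}l$ once a rough uniform bound on $l$ is available. Gaussian upper bounds for $G$ (from $|\Rm|\le D_1/t$ and the volume bound propagated by Lemma~\ref{fact lower vol. ctrl.}) then give $l\le C\alpha_0+C\exp(-1/(C\sqrt{t}))$. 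The second term comes from localizing with cutoffs on balls of radius $\sim t^{1/4}$, outside of which no good bound on $l$ is available; this is why $T$ must depend on $\alpha_0$. Because the accumulated flow is not a single complete flow, this requires Lai's generalized heat kernel on a Ricci flow in expansion. It is also done in two stages: first upgrade $l\le C_\eta|\Rm|\le C_\eta D_1/t$ to a rough uniform bound, then obtain $l\le D_2\alpha_0$.

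Two smaller points. First, Theorem~\ref{thm bcw thm1} cannot be an input, since the completed metric violates its global hypotheses near the boundary. The local curvature estimate is instead Lemma~\ref{lma pseudo.}, applied under the bootstrap hypothesis $l\le D_2\alpha_0<1$, which also supplies the Ricci lower bound via Lemma~\ref{lma 2-non flag}. Second, in that blow-up the rescaled time intervals become infinite, so the limit is ancient. The contradiction then comes from Lemma~\ref{lma ancient sol.}: non-flat ancient solutions in $\mathfrak{C}_{\eta,\mu}$ have nonnegative curvature operator, hence non-Euclidean volume growth. The expander rigidity you invoke is only available for $\eta<\frac{9}{16}$ (Corollary~\ref{cor flat}) and is not needed.
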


\begin{remark}
    Before we give applications from the above short-time existence result for local Ricci flow, we point out that Theorem \ref{thm2-int} covers Theorem \ref{thm1-int} by taking $\alpha_0=\frac{1}{2}$. However, since the existence time $T$ in Theorem \ref{thm2-int} depends on $\alpha_0$, we are not able to obtain the preservation of the curvature cone $\mathfrak{C}_{\eta,\mu}$ from Theorem \ref{thm2-int} directly. In fact, we still need Corollary \ref{cor preseve.} to obtain the topological and geometric gap theorems, Theorem \ref{app1-int} and Theorem \ref{app2-int}. On the other hand, the proof of Theorem \ref{thm2-int} also relies deeply on the curvature estimates in Section \ref{cur-est}, although the proofs of Theorem \ref{thm1-int} and Theorem \ref{thm2-int} are quite different. 
\end{remark}

Similarly, applying a limiting argument to Theorem \ref{thm2-int}, we obtain a short-time existence result for Ricci flow globally starting from a Riemannian metric possibly with unbounded curvature, see Corollary \ref{cor glabol exist.1}. With the help of this Ricci flow, we obtain the following regularity result for Gromov-Hausdorff limits of  complete noncompact volume non-collapsed manifolds with a lower bound with respect to $\mathfrak{C}_{\eta,\mu}$. 

\begin{corollary}[Corollary \ref{cor pGH Holder}]
    Suppose $\alpha_0\in(0,1]$, $v_0>0$, $\eta\in(0,1]$ and $\mu\ge\eta+1$.
    Let $(M_i,g_i)$ be a sequence of $4$-dimensional complete noncompact Riemannian
            manifolds such that for all $i$,
    \[\left\{\;\begin{aligned}
        &\Rm_{g_i}+\alpha_0\mathcal{I}_{g_i}\in\mathfrak{C}_{\eta,\mu}; \\
        &\VolB_{g_i}(x,1)\ge v_0\text{ for all }x\in M_i.
    \end{aligned}\right.\]
    Then there exist a smooth manifold $M$, a point $x_\infty\in M$ and
            a continuous distance metric $d_0$ on $M$ such that for some
            points $x_i\in M_i$, a subsequence of $(M_i,d_{g_i},x_i)$ converges
            to $(M,d_0,x_\infty)$ in pointed Gromov-Hausdorff sense.
    Furthermore, the metric space $(M,d_0)$ is locally bi-H\"older homeomorphic to the
            smooth manifold $M$ equipped with any smooth metric.
\end{corollary}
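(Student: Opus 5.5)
We follow the scheme of Bamler--Cabezas-Rivas--Wilking and Lai, using the global short-time existence result Corollary \ref{cor glabol exist.1} obtained from Theorem \ref{thm2-int} by a limiting argument. For each $i$, that corollary gives a complete Ricci flow $g_i(t)$, $t\in[0,T]$, on $M_i$ with $g_i(0)=g_i$ and uniform constants $T=T(\eta,\mu,\alpha_0,v_0)$, $D_1$, $D_2$, such that
\[
|\Rm|_{g_i(t)}\le \frac{D_1}{t}\quad\text{and}\quad \Rm_{g_i(t)}+D_2\alpha_0\mathcal{I}_{g_i(t)}\in\mathfrak{C}_{\eta,\mu}.
\]
Since $\eta\le1$, the cone condition with a lower bound implies a lower Ricci bound $\Ric_{g_i(t)}\ge -C\alpha_0$. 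Together with the $D_1/t$ bound, this gives the standard distance distortion estimates. Integrating $\partial_t g=-2\Ric\le 2C\alpha_0 g$ gives the upper bound $d_{g_i(t)}\le e^{C\alpha_0 t}d_{g_i(0)}$. The Hamilton--Perelman lemma with $|\Ric|\le C/t$ gives the lower bound $d_{g_i(t)}\ge d_{g_i(s)}-C\sqrt{D_1}(\sqrt t-\sqrt s)$.

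Next we need uniform non-collapsing along the flow, in order to apply Hamilton's compactness theorem at positive times. The unit-ball volume bound at $t=0$, the lower Ricci bound and the distance estimates show, as in \cite{BCW19Inv} or via the shrinking balls lemma, that $\VolB_{g_i(t)}(x,\sqrt t)\ge c(v_0)t^2$. Combined with $|\Rm|\le D_1/t$, Cheeger--Gromov--Taylor gives a lower bound on the injectivity radius at each fixed positive time. Shi-type derivative estimates on $[\tau,T]$ then let us pass to a subsequence and a diagonal argument in $\tau\downarrow0$. The flows $(M_i,g_i(t),x_i)$ converge smoothly in the pointed Cheeger--Gromov sense to a complete Ricci flow $(M,g(t),x_\infty)$, $t\in(0,T]$, on a smooth manifold $M$, satisfying the same estimates.

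The distance bounds pass to the limit and give, for $0<s<t\le T$,
\[
e^{-C\alpha_0(t-s)}d_{g(t)}\le d_{g(s)}\le d_{g(t)}+C\sqrt{D_1}\sqrt{t}.
\]
Hence $d_{g(t)}$ converges uniformly on compact sets, as $t\to0$, to a metric $d_0$; it is a genuine metric because the upper estimate gives $d_0\ge e^{-C\alpha_0 T}d_{g(T)}-C\sqrt T$ locally, and positivity at small scales follows from the bi-Hölder bound below. A standard triangle argument, uniform in $i$, identifies $(M,d_0,x_\infty)$ as the pointed Gromov--Hausdorff limit of $(M_i,d_{g_i},x_i)$: $d_{g_i(0)}$ is uniformly close to $d_{g_i(t)}$, which converges smoothly to $d_{g(t)}$, which is in turn close to $d_0$.

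The main obstacle is the bi-Hölder estimate. Following \cite{BCW19Inv} and Simon--Topping, we compare $d_0$ with $d_{g(T)}$ for points at distance $r$ small. The additive error $C\sqrt t$ is useless on its own, so we combine it with scaling. For $x,y$ with $d_{g(t)}(x,y)\approx\sqrt t$, the bound $|\Ric|\le D_1/t$ gives multiplicative control between times $t$ and $T$: $d_{g(t)}\le (T/t)^{C}d_{g(T)}$, and in the other direction $d_{g(T)}\le (T/t)^{C} d_{g(t)}$. Choosing $t$ comparable to $r^2$ and iterating dyadically yields
\[
C^{-1}d_{g(T)}^{1/\gamma}\le d_0\le C d_{g(T)}^{\gamma}
\]
on compact sets, for some $\gamma\in(0,1)$ depending on $D_1,v_0,\alpha_0$. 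The delicate point is that $D_1$ depends only on $(\mu,v_0)$, which makes the exponent uniform. Since $g(T)$ is a smooth metric, this shows $(M,d_0)$ is locally bi-Hölder homeomorphic to $M$ with any smooth metric.
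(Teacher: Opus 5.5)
Your proposal is correct and is essentially the paper's argument: global flows from Corollary \ref{cor glabol exist.1}, the Ricci lower bound from Lemma \ref{lma 2-non flag}, volume control, compactness, two-sided distance distortion, and the triangle argument for pointed Gromov--Hausdorff convergence. The only difference is that you derive the bi-H\"older comparison yourself, where the paper cites Lai's Lemma 2.3. Your derivation works once the scale is balanced as $t\approx d_{g(T)}(x,y)^{2/(1+2C)}$ rather than $t\approx d_{g(T)}(x,y)^2$, and the other side $d_{g(T)}\le e^{C\alpha_0T}d_0$ is even Lipschitz, which also gives positivity of $d_0$ at once.

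One small correction: the positive-time volume bound comes from Lemma \ref{fact lower vol. ctrl.} plus Bishop--Gromov, since the shrinking balls lemma alone does not bound volumes from below. Its constants also depend on $\eta,\mu,\alpha_0$, which may force a smaller $T$.
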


The paper is organized as follows. In Section 2, we review and discuss decomposition of curvature operators in dimension 4. In Section 3, we prove local and global preservation results of the curvature cone $\mathfrak{C}_{\eta,\mu}$. In Section 4, we prove some curvature estimates which will be used in Section 5 to construct Ricci flow starting from a metric $g_0$ satisfying $\Rm_{g_0}\in\mathfrak{C}_{\eta,\mu}$. Moreover, we provide topological and geometric applications in Section 5. In Section 6, we deal with short-time existence for Ricci flow with a lower bound with respect to $\mathfrak{C}_{\eta,\mu}$ and show geometric applications in this more general case. For readers' convenience, we include two appendices: One is a review of algebraic curvature operators, another is an existence result for cut-off functions we used in this paper. Throughout the paper, all manifolds are assumed to be smooth and connected, all Riemannian metrics are smooth, and all solutions to the Ricci flow are smooth solutions.

{\it Acknowledgements:} The authors would like to thank Pak-Yeung Chan and Man-Chun Lee for their interest in this work and for useful discussions.

\section{Decomposition of curvature operators in dimension 4} \label{sect decomp. 4mfd.}

In this section, we review and discuss the decomposition of curvature operators in dimension 4 following Hamilton \cite{Hamilton86JDG} in 1986. 

We start with dimension $\emph{3}$ as in \cite{Hamilton86JDG}.
Let $\{e_1,e_2,e_3\}$ be an orthonormal basis of $\mathbb{R}^3$ and
        $\{e_2\wedge e_3,e_3\wedge e_1,e_1\wedge e_2\}$ be the corresponding
        orthonormal basis of $\wedge^2\mathbb{R}^3$.
Then we can compute by \eqref{eqn Lie def.} the Lie structure constants
        $C_3^{12}=C_1^{23}=C_2^{31}=-1$.
If we denote components of the curvature operator $\Rm$ in the above basis by
\[
    \Rm_{\alpha\beta}=:\begin{pmatrix}
        a & b & c \\
        b & e & f \\
        c & f & k
    \end{pmatrix},
\]
then we can describe $\Rm^\#$ in the following simple way:
\[
    (\Rm^\#)_{\alpha\beta}=\begin{pmatrix}
        ek-f^2 & cf-bk & bf-ce \\
        cf-bk & ak-c^2 & bc-af \\
        bf-ce & bc-af & ae-b^2
    \end{pmatrix}.
\]

For any oriented Riemannian $\emph{4}$-manifold $(M,g)$, the space of
        $2$-forms $\wedge^2(M)$ admits the orthogonal decomposition
        \cite{Hamilton86JDG}
\[
    \wedge^2(M)=\wedge^+(M)\oplus\wedge^-(M),
\]
into the eigenspaces of the Hodge star operator $\star:\wedge^2(M)\to
        \wedge^2(M)$ of eigenvalues $\pm1$.
For any $p\in M$, after choosing an orthonormal frame, we may identify both
        $\wedge^2T_pM$ and $\wedge^2T_p^*M$ with $\wedge^2\mathbb{R}^4$
        and hence $\mathfrak{so}(4)$.
By the algebraic fact, see for example \cite[Lemma 9.1]{Fox21FMS}, that
\begin{equation} \label{eqn ad star}
    [u,\star v]=\star[u,v]=[\star u,v]\text{ for any }u,v\in\mathfrak{so}(4),
\end{equation}
one can check that this decomposition agrees with the Lie algebra
        decomposition $\mathfrak{so}(4)=\mathfrak{so}(3)\oplus\mathfrak{so}(3)$,
        which will significantly simplify the calculation of Lie structure
        constants if we take the basis of $\wedge_p^2(M)$ to be the union of
        bases of $\wedge_p^+(M)$ and $\wedge_p^-(M)$.
To this end, we pick any $3$-dimensional subspace $W\subset\mathbb{R}^4$ and
        a unit vector $e^\perp$ in the orthogonal complement of $W$, and then
        choose an orthonormal basis $\{w_1,w_2,w_3\}$ of
        $\wedge^2W\subset\wedge^2\mathbb{R}^4$ satisfying
\[
    [w_1,w_2]=w_3,\quad[w_2,w_3]=w_1,\quad[w_3,w_1]=w_2,
\]
for example one can take $w_1=e_3\wedge e_2, w_2=e_1\wedge e_3, w_3=e_2\wedge e_1$, where $\{e_1,e_2,e_3\}$ is an orthonormal basis of $W$. By the definition of Hodge star operator, we have $w_\alpha\wedge\star w_\alpha$
        equals to the volume form of $\mathbb{R}^4$ for each $\alpha=1,2,3$, and hence
        $\star w_\alpha$ is of the form $e^\perp\wedge w$ for some $w\in W$,
        which implies
\begin{equation} \label{eqn w dot star w}
    \langle w_\alpha,\star w_\beta\rangle=0\text{ for any }\alpha,\beta=1,2,3.
\end{equation}
Now we choose bases of $\wedge_p^+(M)$ and $\wedge_p^-(M)$ as follows:
\[\left\{\;\begin{aligned}
    \varphi_1^+&:=\frac{1}{\sqrt{2}}(w_1+\star w_1); \\
    \varphi_2^+&:=\frac{1}{\sqrt{2}}(w_2+\star w_2); \\
    \varphi_3^+&:=\frac{1}{\sqrt{2}}(w_3+\star w_3),
\end{aligned}\right.\quad\text{and}\quad\left\{\;\begin{aligned}
    \varphi_1^-&:=\frac{1}{\sqrt{2}}(w_1-\star w_1); \\
    \varphi_2^-&:=\frac{1}{\sqrt{2}}(w_2-\star w_2); \\
    \varphi_3^-&:=\frac{1}{\sqrt{2}}(w_3-\star w_3).
\end{aligned}\right.\]
Since $\star$ is self-adjoint, one can check by \eqref{eqn ad star} and
        \eqref{eqn w dot star w} that
        $\{\varphi_1^+,\varphi_2^+,\varphi_3^+\}$ and
        $\{\varphi_1^-,\varphi_2^-,\varphi_3^-\}$ form orthonormal bases of
        $\wedge_p^+(M)$ and $\wedge_p^-(M)$ respectively,
        and the Lie structure constants are given by
\[
    [\varphi_1^\pm,\varphi_2^\pm]=\sqrt{2}\varphi_3^\pm,\quad
            [\varphi_2^\pm,\varphi_3^\pm]=\sqrt{2}\varphi_1^\pm,\quad
            [\varphi_3^\pm,\varphi_1^\pm]=\sqrt{2}\varphi_2^\pm.
\]
In what follows, all computations are performed with respect to this basis. 

Now the curvature operator $\Rm$, viewed as a self-adjoint linear operator,
        admits a block decomposition into four pieces,
\begin{equation} \label{eqn decomp.4mfd.}
    \Rm=\begin{pmatrix}
        A & B \\
        B^T & C
    \end{pmatrix},
\end{equation}
where $A:\wedge_p^+(M)\to\wedge_p^+(M)$ and $C:\wedge_p^-(M)\to\wedge_p^-(M)$
        are linear transformations, $B:\wedge_p^-(M)\to\wedge_p^+(M)$
        is a linear operator, and $B^T$ denotes the adjoint of $B$.
From now on, we identify bilinear forms and linear operators 
        by the inner product and no longer distinguish them.
For any $u,v\in\wedge_p^2(M)$, writing $u=u^++u^-$ and $v=v^++v^-$ with
        $u^+,v^+\in\wedge_p^+(M)$ and $u^-,v^-\in\wedge_p^-(M)$, we have
\begin{equation}\begin{aligned} \label{eqn Rm u v}
    \Rm(u,v)&=\langle\Rm(u^+)+\Rm(u^-),v^+\rangle
            +\langle\Rm(u^+)+\Rm(u^-),v^-\rangle \\
    &=\langle A(u^+)+B(u^-),v^+\rangle+\langle B^T(u^+)+C(u^-),v^-\rangle \\
    &=A(u^+,v^+)+B(u^-,v^+)+B^T(u^+,v^-)+C(u^-,v^-).
\end{aligned}\end{equation}
By virtue of Lie algebra decomposition, we have
\begin{align*}
    (\Rm^\#)(u^+,v^+)&=\frac{1}{2}u_+^\alpha v_+^\beta C_\alpha^{\gamma\eta}
            C_\beta^{\delta\theta}\Rm_{\gamma\delta}\Rm_{\eta\theta} \\
    &=\frac{1}{2}u_+^\alpha v_+^\beta C_\alpha^{\gamma\eta}
            C_\beta^{\delta\theta}A_{\gamma\delta}A_{\eta\theta} \\
    &=:2(A^\#)(u^+,v^+),
\end{align*}
where $u^+=u_+^\alpha\varphi_\alpha^+$, etc.  Here we recognize $A$ as a $3$-dimensional linear operator and $A^\#$ as
        the corresponding $\#$-product operator defined in the $3$-dimensional case.
The factor $2$ comes from the product of Lie structure constants which are
        $\sqrt{2}$ in this case but $1$ in $3$-dimensional case.
Similarly, we have
\[
    (\Rm^\#)(u^-,v^-)=:2(C^\#)(u^-,v^-)\text{ and }
            (\Rm^\#)(u^-,v^+)=:2(B^\#)(u^-,v^+).
\]
Although $B$ is not a linear transformation on $\wedge_p^+(M)$ or $\wedge_p^-(M)$,
        we can still define the \emph{bilinear form} or \emph{linear operator}
        $B^\#$ from $\wedge_p^-(M)$ to $\wedge_p^+(M)$ by the same formula as
        \eqref{eqn sharp def.} thanks to the Lie algebra decomposition.
To be precise, for any $u^-=u^{\alpha^-}\varphi_\alpha^-\in\wedge_p^-(M)$,
        we can define $B^\#$ by
\[
    (B^\#u^-)_{\beta^+}:=(B^\#)_{\alpha^-\beta^+}u^{\alpha^-}\text{ where }
            (B^\#)_{\alpha^-\beta^+}:=\frac{1}{2}
            \left(\frac{C_{\alpha^-}^{\gamma\eta}}{\sqrt{2}}\right)
            \left(\frac{C_{\beta^+}^{\delta\theta}}{\sqrt{2}}\right)
            B_{\gamma\delta}B_{\eta\theta}.
\]
Therefore, we have the following block decomposition of $\Rm^\#$:
\[
    \Rm^\#=2\begin{pmatrix}
        A^\# & B^\# \\
        (B^T)^\# & C^\#
    \end{pmatrix}.
\]

We denote the eigenvalues of $A$ and $C$ by
\[
    A_1\le A_2\le A_3\quad\text{and}\quad C_1\le C_2\le C_3,
\]
respectively and the singular values of $B$ by
\[
    0\le B_1\le B_2\le B_3.
\]
We point out that $A_i$ is not the eigenvalue of $\Rm$ in general, but we still
        have for any unit eigenvector $\xi\in\wedge_p^+(M)$ of $A$ with eigenvalue
        $A_i$ that $\Rm(\xi,\xi)=A_i$ by \eqref{eqn Rm u v}.
Similar argument can be applied to $B_i$ and $C_i$.

By the decomposition of curvature operators \cite[(1.58)]{CLN06Book}
\[
    \Rm=\frac{\mathcal{R}}{12}\mathcal{I}+\frac{1}{2}\mathring{\Rc}\owedge g
            +\mathrm{Weyl},
\]
where $\mathcal{R}$ denotes the scalar curvature, $\mathring{\Rc}$ denotes the traceless Ricci tensor and
        $\mathrm{Weyl}$ denotes the Weyl curvature operator, and the
        algebraic fact, see for example \cite[Lemma 9.2]{Fox21FMS}, that
\[
    \star\circ\mathrm{Weyl}=\mathrm{Weyl}\circ\star,
\]
one can check that
\begin{equation} \label{eqn rc owedge g}
    \frac{1}{2}\mathring{\Rc}\owedge g=\begin{pmatrix}
        0 & B \\
        B^T & 0
    \end{pmatrix},
\end{equation}  see also \cite[Page 6]{CX23MathZ}. 
In particular, $B$ is identically zero if and only if the metric is Einstein  by the contracted second Bianchi identity. 
 We can also see that
\[
    \tr(A)=\tr(C)=\frac{\mathcal{R}}{4},
\] since the Weyl curvature operator is trace-free.
Moreover, we can calculate in orthonormal basis that
\begin{align*}
    |\mathring{\Rc}\owedge g|^2&=\sum_{i<j,\;k<l}
            \bigl((\mathring{\Rc}\owedge\id)_{ijkl}\bigr)^2 \\
    &=\frac{1}{4}\sum_{i,j,k,l}((\mathring{\Rc})_{ik}\delta_{jl}
            +(\mathring{\Rc})_{jl}\delta_{ik}
            -(\mathring{\Rc})_{il}\delta_{jk}
            -(\mathring{\Rc})_{jk}\delta_{il})^2 \\
    &=2|\mathring{\Rc}|^2+[\tr(\mathring{\Rc})]^2=2|\mathring{\Rc}|^2.
\end{align*}
It follows that
\begin{equation} \label{eqn norm Rc0}
    |\mathring{\Rc}|=2|B|,
\end{equation}
which is exactly \cite[(2.10) in Lemma 2.1]{CX23MathZ}, see also
        \cite[Section 2]{NiWa08} and \cite{LNW18}.

\dummyline{1}

\section{Preservation of the curvature cone} \label{pre-cone}

In this section, we will prove local and global preservation results of the curvature cone $\mathfrak{C}_{\eta,\mu}$. In order to simplify the proofs, we first set up as follows. 

Suppose $(M,g(t))$ is a solution to the $4$-dimensional Ricci flow for $t\in[0,T]$.
For each $(y,s)\in M\times[0,T]$, after choosing a local orientation, define
\[
    \Theta_{y,s}(M):=\left\{(\xi_1,\cdots,\xi_8)\left|\;\begin{aligned}
        &\xi_1,\cdots,\xi_4\in\wedge_{y,s}^+(M),\;\xi_5,\cdots,
                \xi_8\in\wedge_{y,s}^-(M) \\
        &\text{with }|\xi_{2i-1}|_{g(s)}=|\xi_{2i}|_{g(s)}=1\text{ and}\\
        &\langle\xi_{2i-1},\xi_{2i}\rangle_{g(s)}=0\text{ for }i=1,\cdots,4
    \end{aligned}\right.\right\}.
\]
Let $\mathcal{U}$ be an open neighborhood of $(x,t)$ such that all points in $\mathcal{U}$ are equipped with the same local orientation when one defines $ \Theta_{y,s}(M)$. Then we consider a  fiber bundle over $\mathcal{U}$ defined by

\[
    \Theta(\mathcal{U}):=\bigcup_{(y,s)\in\mathcal{U}}\Theta_{y,s}(M),
\]
and denote the natural projection by

\[
    \pi_\Theta:\Theta(\mathcal{U})\to\mathcal{U},\quad
            \pi_\Theta(\xi_1,\cdots,\xi_8)=(y,s).
\]

For any $\mathfrak{R}\in S_B^2(\mathfrak{so}(4))$ and $\xi=(\xi_1,\cdots,\xi_8)
        \in\Theta(\mathcal{U})$, let
\[\left\{\;\begin{aligned}
    X^{\mathfrak{R}}(\xi)&:=\mathfrak{R}(\xi_1,\xi_1)+\mathfrak{R}(\xi_2,\xi_2); \\
    W^{\mathfrak{R}}(\xi)&:=\mathfrak{R}(\xi_3,\xi_3)+\mathfrak{R}(\xi_4,\xi_4); \\
    Y^{\mathfrak{R}}(\xi)&:=\mathfrak{R}(\xi_5,\xi_5)+\mathfrak{R}(\xi_6,\xi_6); \\
    V^{\mathfrak{R}}(\xi)&:=\mathfrak{R}(\xi_7,\xi_7)+\mathfrak{R}(\xi_8,\xi_8); \\
    Z^{\mathfrak{R}}(\xi)&:=\mathfrak{R}(\xi_7,\xi_3)+\mathfrak{R}(\xi_8,\xi_4).
\end{aligned}\right.\]
These quantities define functions on $\Theta(\mathcal{U})$.

Let $\mathfrak{C}_{\eta,\mu}$ be the \emph{scaling invariant} curvature cone
        defined by
\[
    \mathfrak{C}_{\eta,\mu}:=\left\{\Rm\in S_B^2(\mathfrak{so}(4))\left|\,
            \begin{aligned}
        &(B_2+B_3)^2\le\eta(A_1+A_2)(C_1+C_2), \\
        &A_2+A_3\le\mu(A_1+A_2), \\
        &C_2+C_3\le\mu(C_1+C_2)
    \end{aligned}\right.\right\},
\]
where $\eta,\mu$ are constants satisfying $\mu-1\ge\eta\ge0$ and $\mu>1$.  Here $\{A_i,B_i,C_i\}_{i=1}^3$ are eigenvalues (singular values) of the linear operators $A,B,C$ respectively and satisfy $A_1\le A_2\le A_3, C_1\le C_2\le C_3$ and $0\le B_1\le B_2\le B_3$ as we explain in Section \ref{sect decomp. 4mfd.}.
Note that the definition of $\mathfrak{C}_{\eta,\mu}$ is independent of the
        choice of local orientation.
Indeed, if the orientation is reversed, then the Hodge star operator changes
        sign and hence $\wedge^+$ and $\wedge^-$ are interchanged.
Thus the block operators $A$ and $C$ are interchanged, and the block operator
        $B$ is replaced by its adjoint $B^T$.
Thanks to this symmetry of the definition of $\mathfrak{C}_{\eta,\mu}$, $\mathfrak{C}_{\eta,\mu}$ is a well-defined pointwise curvature cone for all
         $4$-dimensional Riemannian manifolds which is possibly non-oriented.

For any $(y,s)\in\mathcal{U}$, define

\begin{equation}\left\{\;\begin{aligned} \label{eqn def hatF}
    \hat{F}^1(y,s)&:=\inf_{\xi\in\Theta_{y,s}(M)}\eta X^{\Rm}(\xi)Y^{\Rm}(\xi)
            -Z^{\Rm}(\xi)^2; \\
    \hat{F}^2(y,s)&:=\inf_{\xi\in\Theta_{y,s}(M)}\mu X^{\Rm}(\xi)-W^{\Rm}(\xi); \\
    \hat{F}^3(y,s)&:=\inf_{\xi\in\Theta_{y,s}(M)}\mu Y^{\Rm}(\xi)-V^{\Rm}(\xi).
\end{aligned}\right.\end{equation}

Note that when $A_1+A_2\ge0$ and $C_1+C_2\ge0$, we have

\[
    \hat{F}^1(y,s)=\eta(A_1+A_2)(C_1+C_2)-(B_2+B_3)^2,
\]
and in any case,

\[\left\{\;\begin{aligned}
    \hat{F}^2(y,s)&=\mu(A_1+A_2)-(A_2+A_3); \\
    \hat{F}^3(y,s)&=\mu(C_1+C_2)-(C_2+C_3).
\end{aligned}\right.\]
Moreover, since $\mu>1$ and $\hat{F}^2\le(\mu-1)(A_1+A_2)$, we have
        $\hat{F}^2\ge0$ implies $A_1+A_2\ge0$, and similarly $\hat{F}^3\ge0$
        implies $C_1+C_2\ge0$.
By the above discussion, we can also rewrite the cone
\begin{equation} \label{eqn def.2 cone}
    \mathfrak{C}_{\eta,\mu}=\{\Rm\in S_B^2(\mathfrak{so}(4))\mid
            \hat{F}^1,\hat{F}^2,\hat{F}^3\ge0\}.
\end{equation}
By \cite[Theorem 1.2, Section 2]{Hamilton97CAG},
        the last two conditions of the cone guarantee that
        $\mathfrak{C}_{\eta,\mu}\subset\mathfrak{C}_\mathrm{WPIC}$, where
        $\mathfrak{C}_\mathrm{WPIC}$ is the cone of curvature operators with
        weakly positive isotropic curvature.

If we take $\mu=1$, which also forces $\eta=0$, then these three conditions of the
        cone reduce to $B=0$ and $A=C=kI$ for some $k\in\mathbb{R}$, but we lose
        the non-negativity of $A_1+A_2$ and $C_1+C_2$, which is crucial for our
        analysis.
For this reason, we define
\begin{equation} \label{eqn C01}
    \mathfrak{C}_{0,1^+}:=\bigcap_{\mu>1}\mathfrak{C}_{0,\mu}=\{\Rm\in
            S_B^2(\mathfrak{so}(4))\mid\Rm=k\mathcal{I}\text{ for some }k\ge0\}.
\end{equation}
One can readily check the second equality by the definition of the cone.

Next, we will show that the curvature cone $\mathfrak{C}_{\eta,\mu}$ satisfies the null-vector condition, which is basically \cite[Theorem 1.3 and Theorem 1.4, Section 2]{Hamilton97CAG}.

\begin{lemma} \label{lma null vector}
    Let $\mu-1\ge\eta\ge0$ and $\mu>1$. For any $\Rm\in\mathfrak{C}_{\eta,\mu}$
            and $\xi\in\Theta_{x,t}(M)$, we have the following properties:
    \begin{enumerate}
        \item[\textup{(\romannumeral1)}] If $X^{\Rm}(\xi)=0$ or $Y^{\Rm}(\xi)=0$,
                then $\Rm=0$ at $(x,t)$.
        \item[\textup{(\romannumeral2)}] If $\hat{F}^1(x,t)=
                \eta X^{\Rm}(\xi)Y^{\Rm}(\xi)-Z^{\Rm}(\xi)^2=0$,
            then at $(x,t)$,
            \[
                \eta Y^{\Rm}X^{Q(\Rm)}+\eta X^{\Rm}Y^{Q(\Rm)}
                        -2Z^{\Rm}Z^{Q(\Rm)}\ge0.
            \]
        \item[\textup{(\romannumeral3)}] If $\hat{F}^2(x,t)=
                \mu X^{\Rm}(\xi)-W^{\Rm}(\xi)=0$, then at $(x,t)$,
            \[
                \mu X^{Q(\Rm)}-W^{Q(\Rm)}\ge0.
            \]
            The same conclusion holds for $\hat{F}^3$.
    \end{enumerate}
\end{lemma}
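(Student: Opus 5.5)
The plan is to work pointwise at $(x,t)$ and use the block form of the reaction term. With $Q(\Rm)=\Rm^2+\Rm^\#$ and the decomposition $\Rm^\#=2\begin{pmatrix}A^\#&B^\#\\(B^T)^\#&C^\#\end{pmatrix}$ from Section~\ref{sect decomp. 4mfd.}, we have
\[
Q(\Rm)=\begin{pmatrix}A^2+BB^T+2A^\# & AB+BC+2B^\#\\ B^TA+CB^T+2(B^T)^\# & C^2+B^TB+2C^\#\end{pmatrix}.
\]
In a basis of eigenvectors of $A$, the operator $A^\#$ is diagonal with entries $A_2A_3,A_1A_3,A_1A_2$; the same holds for $C$. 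For $B$ we use a singular value basis, in which $B^\#$ is diagonal with entries $B_2B_3,B_1B_3,B_1B_2$, up to the sign convention of the chosen orientation.

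Part (i) is pure algebra. Suppose $X^{\Rm}(\xi)=0$. Since $\xi_1,\xi_2$ are orthonormal in $\wedge^+$, we have $X^{\Rm}(\xi)\ge A_1+A_2$. By the remark after \eqref{eqn def hatF}, $\hat F^2\ge0$ forces $A_1+A_2\ge0$, so $A_1+A_2=0$. Combined with $A_2+A_3\le\mu(A_1+A_2)=0$ and the ordering $A_1\le A_2\le A_3$, this gives $A_1=A_2=A_3=0$. Then $\hat F^1\ge0$ gives $(B_2+B_3)^2\le0$, so $B=0$. Next, $\tr C=\tr A=0$ together with $C_1+C_2\ge0$ forces $C_3\le0\le C_2\le C_3$, hence $C=0$. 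The case $Y^{\Rm}(\xi)=0$ is symmetric.

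Part (iii) is the standard first- and second-variation argument for a minimum over a Grassmannian. The equality $\hat F^2=\mu X-W=0$ says that $\xi$ attains the infimum. Hence $(\xi_1,\xi_2)$ spans the $A$-eigenspace for $A_1,A_2$, and $(\xi_3,\xi_4)$ spans the eigenspace for $A_2,A_3$ (after perturbing, or using a continuity argument in the degenerate case). Then
\[
X^{Q}=A_1^2+A_2^2+2A_3(A_1+A_2)+|B^T\xi_1|^2+|B^T\xi_2|^2,
\]
and $W^{Q}$ is given by the analogous formula. The quantity $\mu X^Q-W^Q$ then splits into an $A$-part and a $B$-part. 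The $A$-part is handled by substituting $A_2+A_3=\mu(A_1+A_2)$ and using $\mu>1$, following \cite[Theorem~1.4, Section~2]{Hamilton97CAG}. For the $B$-part we need $\mu(|B^T\xi_1|^2+|B^T\xi_2|^2)\ge|B^T\xi_2|^2+|B^T\xi_3|^2$. We would bound the right side by $B_2^2+B_3^2\le(B_2+B_3)^2$ and then by $\eta(A_1+A_2)(C_1+C_2)$ using $\hat F^1\ge0$. The remaining $A$-terms must absorb this, using $C_1+C_2\le\tfrac{2}{3}\tr C=\tfrac23\tr A\le\ldots$ and $\eta\le\mu-1$. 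This is precisely where the hypothesis $\mu-1\ge\eta$ enters.

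Part (ii) is the main obstacle, because the three blocks interact through $Z^{Q}$, which contains $AB+BC+2B^\#$, and $A$, $B$, $C$ need not be simultaneously diagonalizable. My first attempt is Hamilton's route. Minimality of $\eta XY-Z^2$ at $\xi$ gives first-order conditions: varying $(\xi_1,\xi_2)$ in $\wedge^+$ shows that $\eta Y\,A-Z\,B$ (restricted suitably) is "diagonal" with respect to the pair, and similarly for $\wedge^-$. These conditions let us choose $\xi_3,\xi_4$ and $\xi_7,\xi_8$ as singular vectors of $B$ for $B_2,B_3$, and we may take $\xi_1=\xi_3$, $\xi_2=\xi_4$, $\xi_5=\xi_7$, $\xi_6=\xi_8$.

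With this choice, $Z^{Q}$ reduces to terms like $(A\xi_3,B\xi_7)+(B\xi_7,C\xi_7)+2B^\#$ entries, namely $2B_1(B_2+B_3)$ plus cross terms. We would then use the second-order condition (nonnegativity of the Hessian of $\eta XY-Z^2$ under rotations mixing $\xi_i$ with the third basis vector) to control these cross terms. Finally, we write
\[
\eta Y X^Q+\eta X Y^Q-2ZZ^Q
\]
as a sum of the squares $\eta Y|B^T\cdot|^2$, $\eta X|B\cdot|^2$, minus $2Z\cdot 2B_1(B_2+B_3)$, plus the $A^\#,C^\#$ contributions $2A_3(A_1+A_2)$ and $2C_3(C_1+C_2)$. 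We then close the argument with AM--GM, $Z^2=\eta XY$, $A_3\ge\frac12(A_2+A_3)$, and $B_1\le B_2$.

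If the eigenvector alignment fails in general, the fallback is to cite \cite[Theorem~1.3, Section~2]{Hamilton97CAG} for this inequality and check that the normalization of $\#$ used here (the factor $2$ and the $\sqrt2$ structure constants) matches his.
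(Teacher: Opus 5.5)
\textbf{Gap in part (ii).} The proposal reads $Z^{\Rm}$ as if it were built from the same vectors as $X^{\Rm}$ and $Y^{\Rm}$, but it is not. By definition $X^{\Rm}$ involves only $(\xi_1,\xi_2)$, $Y^{\Rm}$ only $(\xi_5,\xi_6)$, and $Z^{\Rm}$ only $(\xi_3,\xi_4,\xi_7,\xi_8)$. So there is no coupled first-order condition of the type ``$\eta YA-ZB$ is diagonal''. Nor are you free to set $\xi_1=\xi_3$, $\xi_2=\xi_4$, $\xi_5=\xi_7$, $\xi_6=\xi_8$. At a minimizer, $(\xi_1,\xi_2)$ spans an $A$-invariant plane carrying the eigenvalues $A_1,A_2$. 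Meanwhile $(\xi_3,\xi_4),(\xi_7,\xi_8)$ realize the two largest singular values $B_2,B_3$ of $B$. These planes are unrelated in general. Your identification silently replaces $|B^T\xi_1|^2+|B^T\xi_2|^2$, which is only known to be $\ge B_1^2+B_2^2$, by $B_2^2+B_3^2$. You would therefore be proving the inequality for a different, generally non-minimizing, $\xi$. The rest of the plan (Hessian control of ``cross terms'', then AM--GM with $A_3\ge\frac12(A_2+A_3)$ and $B_1\le B_2$) is not an actual argument. Falling back on citing Hamilton does not prove the statement in this normalization.

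\textbf{How (ii) actually goes.} The decoupling is exactly what makes (ii) easy, and this is how the paper proceeds. If $\eta=0$, then $B=0$ and $Z=0$, so there is nothing to prove. If $\Rm\ne0$, part (i) gives $A_1+A_2>0$ and $C_1+C_2>0$. Then equality in $\eta XY-Z^2\ge\eta(A_1+A_2)(C_1+C_2)-(B_2+B_3)^2$ forces $X=A_1+A_2$ and $Y=C_1+C_2$. After flipping $\xi_7,\xi_8$ (which leaves $ZZ^{Q(\Rm)}$ unchanged) it also forces $Z=B_2+B_3$. One then has
\[
X^{Q(\Rm)}\ge A_1^2+A_2^2+2A_3X+2B_1^2\ge2X(A_3+B_1),\quad Y^{Q(\Rm)}\ge2Y(C_3+B_1),\quad Z^{Q(\Rm)}\le Z(A_3+C_3+2B_1).
\]
Substituting these and using $\eta XY=Z^2$, the lower bound for $\eta YX^{Q(\Rm)}+\eta XY^{Q(\Rm)}-2ZZ^{Q(\Rm)}$ is exactly $0$. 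Two points your sketch misses:
\begin{enumerate}
\item The $AB+BC$ entries of $Z^{Q(\Rm)}$ are not cross terms needing second-order information. They are bounded by $A_3Z$ and $C_3Z$ simply because $\langle A\zeta,\zeta\rangle\le A_3$ and $\langle C\zeta,\zeta\rangle\le C_3$, and they cancel the $A^\#$, $C^\#$ contributions exactly.
\item The $B^\#$ contribution $4B_1Z^2$ is absorbed only by pairing $A_1^2+A_2^2$ with $2B_1^2$ via $(A_1-B_1)^2+(A_2-B_1)^2\ge0$. The quadratic $|B^T\,\cdot\,|^2$ terms alone cannot absorb this linear-in-$B_1$ term.
\end{enumerate}

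\textbf{Parts (i) and (iii).} These are fine and match the paper. In (iii) the same exact cancellation occurs, using
\begin{itemize}
\item $X^{Q(\Rm)}\ge(A_1+A_2)(2A_3+A_1)$,
\item $W^{Q(\Rm)}\le(A_2+A_3)(A_3+2A_1)+(B_2+B_3)^2$,
\item $\eta\le\mu-1$ and $C_1+C_2\le A_2+A_3$,
\item $A_2+A_3=\mu(A_1+A_2)$.
\end{itemize}
Two small corrections: the $B$-part of $W^{Q(\Rm)}$ is $|B^T\xi_3|^2+|B^T\xi_4|^2$, and the standalone $B$-inequality you first write down is false in general. Only the absorption into the $A$-terms works.
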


\begin{proof}
    \textup{(\romannumeral1)}
    We only prove the case $X^{\Rm}(\xi)=0$ since the other case is similar.
    We already know that $X^{\Rm}(\xi)\ge A_1+A_2\ge0$, so $A_1+A_2=0$.
    By the first and second conditions of the cone, we have $B=0$ and $A_2+A_3=0$.
    Then $A_1=A_3$ and hence $A=0$.
    In addition, $C=0$ follows from $\tr(C)=\tr(A)=0$ and $C_1+C_2\ge0$.

    \textup{(\romannumeral2)}
    If $\eta=0$, then $B=0$, which implies $Z^{\Rm}(\xi)=0$ and hence the
            desired inequality holds trivially.
    We may therefore assume $\eta>0$.
    By changing the signs of $\xi_7$ and $\xi_8$ if necessary,
            we may assume $Z^{\Rm}(\xi)\ge0$.
    Then by assumption, we have
    \[\left\{\;\begin{aligned}
        X^{\Rm}(\xi)&=A_1+A_2\ge0; \\
        Y^{\Rm}(\xi)&=C_1+C_2\ge0; \\
        Z^{\Rm}(\xi)&=B_2+B_3\ge0.
    \end{aligned}\right.\]
    From the proof of \cite[Lemma 6.1]{Hamilton86JDG}
            (see also \cite{Hamilton97CAG}),
            it holds that
    \[\left\{\;\begin{aligned}
        X^{Q(\Rm)}(\xi)&\ge A_1^2+A_2^2+2(A_1+A_2)A_3+2B_1^2 \\
        &\ge2(A_1+A_2)(A_3+B_1); \\
        Y^{Q(\Rm)}(\xi)&\ge C_1^2+C_2^2+2(C_1+C_2)C_3+2B_1^2 \\
        &\ge2(C_1+C_2)(C_3+B_1); \\
        Z^{Q(\Rm)}(\xi)&\le(B_2+B_3)A_3+(B_2+B_3)C_3+2(B_2+B_3)B_1 \\
        &=(B_2+B_3)(A_3+C_3+2B_1).
    \end{aligned}\right.\]
    We remark that the above results differ from \cite{Hamilton86JDG} by a
            factor $2$ (e.g. $A_1=a_1/2$) due to the different definition of
            the inner product on $\wedge^2$.

    Therefore, we obtain
    \begin{align*}
        &\eta Y^{\Rm}X^{Q(\Rm)}+\eta X^{\Rm}Y^{Q(\Rm)}-2Z^{\Rm}Z^{Q(\Rm)} \\
        &\qquad\ge2\eta(C_1+C_2)(A_1+A_2)(A_3+B_1) \\
                &\qquad\qquad+2\eta(A_1+A_2)(C_1+C_2)(C_3+B_1) \\
                &\qquad\qquad-2(B_2+B_3)^2(A_3+C_3+2B_1) \\
        &\qquad=0,
    \end{align*}
    where we have used $\eta(A_1+A_2)(C_1+C_2)=(B_2+B_3)^2$.

    \textup{(\romannumeral3)} We only prove the conclusion for $\hat{F}^2$ since
            the case for $\hat{F}^3$ is similar.
    By assumption, we have
    \[\left\{\;\begin{aligned}
        X^{\Rm}(\xi)&=A_1+A_2\ge0; \\
        W^{\Rm}(\xi)&=A_2+A_3\ge0.
    \end{aligned}\right.\]
    From the proof of \cite[Lemma 6.1]{Hamilton86JDG} and \cite[Theorem 1.4]
            {Hamilton97CAG}, it holds that
    \begin{align*}
        X^{Q(\Rm)}(\xi)&\ge A_1^2+A_2^2+2(A_1+A_2)A_3+2B_1^2 \\
        &\ge(A_1+A_2)(2A_3+A_1), \\
    \end{align*}
    and
    \begin{align*}
        W^{Q(\Rm)}(\xi)&\le (A_2+A_3)A_3+2(A_2+A_3)A_1+(B_2+B_3)^2 \\
        &\le(A_2+A_3)(A_3+2A_1)+\eta(A_1+A_2)(C_1+C_2) \\
        &\le(A_2+A_3)(A_3+2A_1+(\mu-1)(A_1+A_2)) \\
        &\le(A_2+A_3)(A_3+A_1-A_2+\mu(A_1+A_2)),
    \end{align*}
    where we have used $(B_2+B_3)^2\le\eta(A_1+A_2)(C_1+C_2)$, $\eta\le\mu-1$
            and $C_1+C_2\le\frac{2}{3}\tr(C)=\frac{2}{3}\tr(A)\le A_2+A_3$.

    Therefore, the desired inequality follows from $A_2+A_3=\mu(A_1+A_2)$.
\end{proof}

We will use several times the technique of extending vectors at a local extremum
        point to construct smooth barrier functions defined on a spacetime
        neighborhood of the extremum point.
We state it here for later use.
We can extend $\xi$ to a local section $\tilde{\xi}$ of
        $\Theta(\mathcal{U})$ near $\pi_\Theta(\xi)\in\mathcal{U}$ by parallel
        translation with respect to $g(t)$ and then extend it to spacetime
        with $\nabla_t\tilde{\xi}=0$, then we can define the following
        \emph{smooth} functions on some neighborhood of $\pi_\Theta(\xi)$ in
        $\mathcal{U}$ by
\[\left\{\;\begin{aligned}
    X_\xi^{\Rm}(y,s)&:=X^{\Rm}(\tilde{\xi}(y,s)); \\
    Y_\xi^{\Rm}(y,s)&:=Y^{\Rm}(\tilde{\xi}(y,s)); \\
    Z_\xi^{\Rm}(y,s)&:=Z^{\Rm}(\tilde{\xi}(y,s)); \\
    W_\xi^{\Rm}(y,s)&:=W^{\Rm}(\tilde{\xi}(y,s)); \\
    V_\xi^{\Rm}(y,s)&:=V^{\Rm}(\tilde{\xi}(y,s)),
\end{aligned}\right.\]
and
\[\left\{\;\begin{aligned}
    \tilde{F}_\xi^1(y,s)&:=\eta X_\xi^{\Rm}(y,s)Y_\xi^{\Rm}(y,s)
            -Z_\xi^{\Rm}(y,s)^2\ge\hat{F}^1(y,s); \\
    \tilde{F}_\xi^2(y,s)&:=\mu X_\xi^{\Rm}(y,s)-W_\xi^{\Rm}(y,s)
            \ge\hat{F}^2(y,s); \\
    \tilde{F}_\xi^3(y,s)&:=\mu Y_\xi^{\Rm}(y,s)-V_\xi^{\Rm}(y,s)
            \ge\hat{F}^3(y,s).
\end{aligned}\right.\]

\dummyline{1}

Next, we will show the following local preservation of the curvature cone $\mathfrak{C}_{\eta,\mu}$. The idea of the proof  is by now  standard, see for example \cite{HT18AJM,LT19,HL21JGA,LT22CJM}. 

\begin{theorem} \label{thm t power k}
    For any $\mu-1\ge\eta>0$, $a\ge3$, $\sigma>0$, $r>0$ and $k\in\mathbb{N}$,
            there exist an absolute constant $c_1>0$ and a $c_2(\eta)>0$ such that
            the following holds.
    Let $(M,g(t)),t\in[0,T],T\le1$, be a smooth solution to the $4$-dimensional Ricci flow.
    Suppose $B_{g(t)}(p,r+4\sigma)\subset\subset M$ for some $p\in M$ and all
            $t\in[0,T]$ such that
    \begin{enumerate}
        \item[\textup{(\romannumeral1)}] $\Rm_{g(0)}\in \mathfrak{C}_{\eta,\mu}$
                on $B_{g(0)}(p,r+4\sigma)$;
        \item[\textup{(\romannumeral2)}] $|\Rm|_{g(t)}\le\frac{a}{t}$ on
                $B_{g(t)}(p,r+4\sigma)$ for all $t\in(0,T]$.
    \end{enumerate}
    Then
    \[
        \Rm_{g(t)}+t^k\mathcal{I}_{g(t)}\in\mathfrak{C}_{\eta,\mu}
                \text{ on }B_{g(t)}(p,r),
    \]
    for all $t\le T\wedge c_1\sigma^2a^{-1}\wedge
            c_2\min\{\sigma^4,\sigma^{4-\frac{2}{\hat{k}+2}}\}\hat{k}^{-2}$,
            where $\hat{k}:=\max\{k,2a\}$.
    Here $\mathcal{I}_{g(t)}$ denotes the constant curvature operator with
            sectional curvature $1$.
\end{theorem}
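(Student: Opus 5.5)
We follow the standard localized maximum-principle scheme of \cite{HT18AJM,LT19,HL21JGA,LT22CJM}. The plan is to run a barrier argument on the three functions $\hat F^1,\hat F^2,\hat F^3$ of \eqref{eqn def hatF}, applied to the perturbed operator $\tilde{\Rm}:=\Rm_{g(t)}+\varphi(x,t)\,\mathcal{I}_{g(t)}$. The perturbation is chosen to be large near the boundary of the ball and of size comparable to $t^k$ near the centre.

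\textbf{Cutoff and perturbation.} By the curvature bound (ii) and the distance distortion estimates of Hamilton--Perelman type, there is a cut-off function $\phi$ supported in $B_{g(t)}(p,r+3\sigma)$, equal to $1$ on $B_{g(t)}(p,r)$, satisfying $(\partial_t-\Delta)\phi\le C\sqrt{a}/(\sigma\sqrt t)$ in the barrier sense, together with $|\nabla\phi|^2\le C\phi/\sigma^2$. This holds for $t\le c_1\sigma^2a^{-1}$, and the construction is the one given in the appendix on cut-off functions. We then take
\[
\varphi=\varepsilon+ t^{k}\,e^{\Lambda t}\,\Phi,
\]
where $\Phi$ is a large multiple of $\phi^{-m}$, or more precisely a power $e^{h}$ of an auxiliary function with $h=\infty$ off the support. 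Here $\varepsilon\to0$ at the end, and the exponents are tied to $\hat k=\max\{k,2a\}$. The point of using $\hat k\ge 2a$ is that $t^{\hat k}$ absorbs the reaction term, which is of size $|\Rm|\varphi\le a\varphi/t$, because $\partial_t t^{\hat k}=\hat k t^{\hat k-1}\ge 2a\,t^{\hat k-1}$.

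\textbf{Evolution of $\tilde{\Rm}$ and the first touching point.} Since $\mathcal{I}$ is parallel and satisfies $Q(\mathcal I)=2\cdot3\,\mathcal I$-type identities, the operator $\tilde{\Rm}$ evolves by
\[
(\partial_t-\Delta)\tilde{\Rm}=Q(\tilde{\Rm})+\big[(\partial_t-\Delta)\varphi\big]\mathcal{I}+E,
\]
with error $|E|\le C(|\Rm|\varphi+\varphi^2)$. At $t=0$ we have $\tilde{\Rm}\in\operatorname{int}\mathfrak C_{\eta,\mu}$ wherever $\varphi>0$ (adding $\varphi\mathcal I$ to a cone element pushes it strictly inside), and near the boundary $\varphi=\infty$. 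Suppose the claim fails, and consider the first time $t_0>0$ at which some $\hat F^i(\tilde{\Rm})$ vanishes at a point $x_0$ with $\phi(x_0)>0$. We pick the minimizing frame $\xi\in\Theta_{x_0,t_0}$, extend it by parallel transport as set up before the statement, and form the smooth barrier $\tilde F^i_\xi$. By the null-vector property, Lemma \ref{lma null vector}(ii),(iii) applied to $\tilde{\Rm}\in\mathfrak C_{\eta,\mu}$, the reaction term $Q(\tilde{\Rm})$ contributes nonnegatively. Note that case (i) of that lemma forces $\tilde{\Rm}=0$, which is impossible since $\varphi>0$. We are therefore left with showing that the $\varphi$ contributions have the right sign:
\[
0\ge(\partial_t-\Delta)\tilde F^i_\xi\ge \big[(\partial_t-\Delta)\varphi\big]\cdot L_i-|E|\cdot L_i' .
\]
Here $L_2=2(\mu-1)>0$ and $L_3=2(\mu-1)$, while $L_1=2\eta(X+Y)$ and $L_1'\sim(X+Y+|Z|)$. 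For $\hat F^1$ we need $Z^2\le\eta XY$ to bound $|Z|$ by $\sqrt\eta\,(X+Y)/2$. This is where $c_2$ acquires its dependence on $\eta$.

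\textbf{Main obstacle.} The hardest step is the weighted computation for $\hat F^1$. That function is quadratic, so terms like $\varphi^2$ and the gradient term $|\nabla\varphi|^2/\varphi$ appear. The gradient term enters through the Laplacian of the product $XY$ after adding $\varphi\mathcal I$; there $X$ becomes $X+2\varphi$, and the cross term $-2\langle\nabla X,\nabla Y\rangle$ must be controlled using the first-order condition at the minimum. We plan to handle this by choosing $\Phi=\phi^{-m}$ with $m$ of order $\hat k$, so that $|\nabla\Phi|^2/\Phi\le C m^2\Phi^{1+2/m}/\sigma^2$. Balancing this against $\partial_t(t^{\hat k})$ under the constraint $t\le c_2\sigma^{4-2/(\hat k+2)}\hat k^{-2}$ (and $\sigma^4\hat k^{-2}$) should yield the inequality $(\partial_t-\Delta)\varphi> C(a/t)\varphi+C\varphi^2+C|\nabla\varphi|^2/\varphi$ on the support. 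This inequality contradicts the display above. Letting $\varepsilon\to0$ and restricting to $B_{g(t)}(p,r)$, where $\phi=1$, then gives $\Rm+t^k\mathcal I\in\mathfrak C_{\eta,\mu}$, after adjusting constants so that the coefficient $e^{\Lambda t}\Phi$ is at most $1$ there. This last adjustment may require normalizing $\Phi$ rather than taking a large multiple.
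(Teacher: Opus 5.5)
The overall framework is right. You take the first touching time of $\Rm+\varphi\mathcal I$, extend the minimizing frame by parallel transport, use Lemma~\ref{lma null vector} to dispose of $Q(\tilde{\Rm})$, and use the first-order condition to make the gradient terms of $\eta XY-Z^2$ nonnegative. In fact your additive barrier is the paper's argument in disguise. Since $\mathfrak C_{\eta,\mu}$ is a cone, $\Rm+t^k\Phi\,\mathcal I\in\mathfrak C_{\eta,\mu}$ if and only if $\Phi^{-1}\Rm+t^k\mathcal I\in\mathfrak C_{\eta,\mu}$. That is the paper's normalization $\Lambda$, with its cut-off function equal to your $1/\Phi$.

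\textbf{The gap.} The step you defer to the end cannot be carried out as stated. You want $(\partial_t-\Delta)\varphi>C(a/t)\varphi+C\varphi^2+C|\nabla\varphi|^2/\varphi$ \emph{on the whole support}, with $\varphi=+\infty$ on its boundary, and no such $\varphi$ exists. Near the boundary, $C\varphi^2\sim t^{2k}\Phi^2$ and the positive part of $\Delta\varphi$ (comparable to $t^k|\nabla\Phi|^2/\Phi$, i.e.\ up to $m^2t^k\Phi^{1+2/m}/\sigma^2$ by your own estimate) grow faster in $\Phi$ than $\partial_t\varphi\approx kt^{k-1}\Phi$. More fundamentally, a nonnegative supersolution of $\partial_t-\Delta$ that tends to $+\infty$ at the lateral boundary cannot be finite inside at positive times. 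To see this, compare it with $N$ times the caloric function with boundary value $1$ and zero initial data, and let $N\to\infty$. Knowing only $|\Rm|\le a/t$ at the touching point does not keep $(x_0,t_0)$ away from where $\Phi$ is huge.

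\textbf{The missing ingredient} is the paper's estimate \eqref{eqn varphi over Phi}. Because $\Rm+\varphi\mathcal I\in\partial\mathfrak C_{\eta,\mu}$ at $(x_0,t_0)$, the geometry of the cone forces $\varphi\le C_\eta|\Rm|\le C_\eta a/t_0$ there:
\begin{itemize}
\item in case $1^\circ$, $\eta XY=Z^2$ gives $\sqrt\eta X\le Z$ or $\sqrt\eta Y\le Z$, so $\varphi\le(1+\eta^{-1/2})|\Rm|$;
\item in case $2^\circ$, $2(\mu-1)\varphi=-(\mu X^{\Rm}-W^{\Rm})$, so $\varphi\le\frac{\mu+1}{\mu-1}|\Rm|\le(1+\frac2\eta)|\Rm|$.
\end{itemize}
This bound, not a bound on $|Z|$, is where $\eta$ enters. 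The $Z$-part of the perturbation actually contributes $+8\varphi Z^2$, which has a good sign.

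With this bound the argument closes at the touching point alone:
\begin{itemize}
\item $6\varphi^2\le 6C_\eta(a/t_0)\varphi$, so the reaction term $(\mathcal R+6\varphi)\varphi$ is absorbed by $\partial_t t^k$ once $k\ge 2C_\eta a$. Note this is not $2a$; that is why the paper runs the argument with $C_\eta\hat k$ and then uses $t^k\ge t^{C_\eta\hat k}$.
\item $t_0^k\Phi\le C_\eta a/t_0$ converts $\Phi^{2/m}$ into a power of $t_0$. With $2/m=1/(2k+3)$, the cut-off term is then dominated by $\frac k2 t_0^{k-1}\Phi$ precisely when $t_0\lesssim\sigma^{4-2/(k+2)}k^{-2}$.
\end{itemize}
The inequality is needed, and true, only at the touching point.

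\textbf{Two smaller repairs.}
\begin{itemize}
\item The case $\tilde{\Rm}=0$ is not excluded by $\varphi>0$; it occurs when $\Rm=-\varphi\mathcal I$. In that case $\hat F^2=0$, so it is handled in case $2^\circ$.
\item The constant $\varepsilon$ is not a barrier by itself. Since $\partial_t\varepsilon=0$, it cannot absorb $(\mathcal R+6\varphi)\varepsilon$ at small times, where $t^k\Phi\ll\varepsilon$. Use, for example, $\varepsilon e^{Kt}$ with $K$ from the non-uniform curvature bound on the compact region; this is the role of the paper's preliminary Claim. Keep the coefficient in front of $t^k\Phi$ uniform.
\end{itemize}
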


\dummyline{0.5}

\begin{remark}
    We point out that the above theorem does not cover the case $\eta=0$.
    Although in this case $B=0$ and the associated metrics are Einstein as
            discussed in Section \ref{sect decomp. 4mfd.}, our method fails to
            handle it.
\end{remark}

\dummyline{0.5}

\begin{proof}[Proof of Theorem \ref{thm t power k}]
    Consider the curvature operator
    \[
        \Lambda(x,t):=\Phi(x,t)\Rm(x,t)+\varphi(t)\mathcal{I}_{g(t)},
    \]
    where $\Phi(\cdot,t)\ge0\in C_c^2(M)$ and $\varphi(t)>0$ are functions
            to be determined later.
    We are going to show that there exists a uniform time $\tilde{T}>0$ such that
            $\Lambda(t)\in\mathfrak{C}_{\eta,\mu}$ for all $t\in[0,\tilde{T}]$.

    Clearly $\Lambda(0)\in\interior(\mathfrak{C}_{\eta,\mu})$ and hence
            $\Lambda(t)\in\mathfrak{C}_{\eta,\mu}$ for small $t>0$.
    Let $t_1>0$ be the first time such that $\Lambda(t_1)$ hits
            the boundary of $\mathfrak{C}_{\eta,\mu}$ at some point $x_0\in M$,
            which means that $\Lambda(t)\in\mathfrak{C}_{\eta,\mu}$ for all
            $t\in[0,t_1]$ but at $(x_0,t_1)$, after choosing a local orientation,
            we have one of the following conditions holds
            (see \eqref{eqn def.2 cone}):
    \begin{enumerate}
        \item[\textup{$1^\circ$}] $\hat{F}^1(x_0,t_1)=0,
                \quad\hat{F}^2(x_0,t_1)\ge0,\quad\hat{F}^3(x_0,t_1)\ge0$;
        \item[\textup{$2^\circ$}] $\hat{F}^1(x_0,t_1)\ge0,
                \quad\hat{F}^2(x_0,t_1)=0,\quad\hat{F}^3(x_0,t_1)\ge0$;
        \item[\textup{$3^\circ$}] $\hat{F}^1(x_0,t_1)\ge0,
                \quad\hat{F}^2(x_0,t_1)\ge0,\quad\hat{F}^3(x_0,t_1)=0$.
    \end{enumerate}

\dummyline{0.5}

    For the first case, since we still have $\hat{F}^2,\hat{F}^3\ge0$,
            there exists $\xi\in\Theta_{x_0,t_1}(M)$ such that
    \begin{align*}
        0=\hat{F}^1(x_0,t_1)&=\eta X^{\Lambda}(\xi)Y^{\Lambda}(\xi)
                -Z^{\Lambda}(\xi)^2 \\
        &=\eta(A_1+A_2)(C_1+C_2)-(B_2+B_3)^2.
    \end{align*}
    Here we have decomposed $\Lambda$ as in \eqref{eqn decomp.4mfd.} and denoted
            the corresponding eigenvalues or singular values by $A_i,B_i,C_i$,
            $i=1,2,3$.
    Extend $\xi$ to a local section  and define the
            associated \emph{functions} as discussed above.
    Denote
    \[\left\{\;\begin{aligned}
        X&:=X_{\xi}^{\Lambda}=\Phi X_{\xi}^{\Rm}+2\varphi=:\Phi\x+2\varphi; \\
        Y&:=Y_{\xi}^{\Lambda}=\Phi Y_{\xi}^{\Rm}+2\varphi=:\Phi\y+2\varphi; \\
        Z&:=Z_{\xi}^{\Lambda}=\Phi Z_{\xi}^{\Rm}=:\Phi\z.
    \end{aligned}\right.\]
    The missing terms $2\varphi$ in the last equalities above are due to the
            definition of $Z^\Lambda$, which forces the contribution of
            $\varphi\mathcal{I}$ to vanish.
    Note that at $(x_0,t_1)$, we have
    \[\left\{\;\begin{aligned}
        X&=A_1+A_2\ge0; \\
        Y&=C_1+C_2\ge0; \\
        Z&=B_2+B_3\ge0.
    \end{aligned}\right.\]

    We may assume $X,Y,Z>0$ at $(x_0,t_1)$, since the vanishing of any of $X,Y,Z$ 
            at $(x_0,t_1)$ implies at least one of $X$ and $Y$ vanishes and
            hence $\Lambda(x_0,t_1)=0$ by Lemma \ref{lma null vector}
            \textup{(\romannumeral1)}, which means $\hat{F}^2=0$ and then
            we can handle this in the case $2^\circ$.
    Clearly, $\Phi(x_0,t_1)>0$ since $\varphi>0$, and the smooth function
            $\tilde{F}_{\xi}^1:=\eta XY-Z^2\ge\hat{F}^1$ attains its minimum $0$
            at $(x_0,t_1)$.
    Then at $(x_0,t_1)$, we have
    \begin{align}
        0&=2(\eta XY-Z^2) \notag \\
        &=\eta Y(\Phi\x+2\varphi)+\eta X(\Phi\y+2\varphi)-2Z(\Phi\z) \notag \\
        &=\Phi(\eta Y\x+\eta X\y-2Z\z)+2\eta(X+Y)\varphi, \label{eqn order 0} \\
        0&=\nabla^{g(t_1)}(\eta XY-Z^2) \notag \\
        &=\eta Y\nabla X+\eta X\nabla Y-2Z\nabla Z \notag \\
        &=\eta Y(\x\nabla\Phi+\Phi\nabla\x)+\eta X(\y\nabla\Phi+\Phi\nabla\y)
                -2Z(\z\nabla\Phi+\Phi\nabla\z) \notag \\
        &=\Phi(\eta Y\nabla\x+\eta X\nabla\y-2Z\nabla\z)
                +\nabla\Phi(\eta Y\x+\eta X\y-2Z\z) \notag \\
        &=\Phi(\eta Y\nabla\x+\eta X\nabla\y-2Z\nabla\z)
                -2\eta(X+Y)\varphi\frac{\nabla\Phi}{\Phi}, \label{eqn order 1}
    \end{align}
    and
    \begin{align}
        0&\ge\Boxg{g(t_1)}(\eta XY-Z^2) \notag \\
        &=\eta Y\Box X+\eta X\Box Y-2Z\Box Z-2\eta\langle\nabla X,\nabla Y
                \rangle+2|\nabla Z|^2 \notag \\
        &=\eta Y(\x\Box\Phi+\Phi\Box\x-2\langle\nabla\Phi,\nabla\x
                \rangle+2\varphi')\notag \\
                &\qquad+\eta X(\y\Box\Phi+\Phi\Box\y-2\langle\nabla\Phi,\nabla\y
                \rangle+2\varphi') \notag \\
                &\qquad-2Z(\z\Box\Phi+\Phi\Box\z-2\langle\nabla\Phi,
                \nabla\z\rangle) \notag \\
                &\qquad+\frac{\eta|Y\nabla X+X\nabla Y|^2
                -4\eta XY\langle\nabla X,\nabla Y\rangle}{2XY} \notag \\
        &=\Box \Phi(\eta Y\x+\eta X\y-2Z\z)-2\langle\nabla\Phi,\eta Y\nabla\x
                +\eta X\nabla\y-2Z\nabla\z\rangle \notag \\
                &\qquad+\Phi(\eta Y\Box\x+\eta X\Box\y-2Z\Box\z)
                +2\eta(X+Y)\varphi' \notag \\
                &\qquad+\frac{\eta|Y\nabla X-X\nabla Y|^2}{2XY} \notag \\
        &\begin{aligned} \label{eqn order 2}
            &\ge-2\eta(X+Y)\varphi\frac{\Box\Phi}{\Phi}
                -4\eta(X+Y)\varphi\frac{|\nabla\Phi|^2}{\Phi^2}
                +2\eta(X+Y)\varphi' \\
                &\qquad+2\Phi(\eta YX_\xi^{Q(\Rm)}+\eta XY_\xi^{Q(\Rm)}
                -2ZZ_\xi^{Q(\Rm)}),
        \end{aligned}
    \end{align}
    where we denote $\left(\frac{\partial}{\partial t}-\Delta\right)$
            by $\Box$ for notational convenience and have used
            \eqref{eqn order 0}, \eqref{eqn order 1} and \eqref{eqn evol. Rm}.

    The last term in \eqref{eqn order 2} is the difficult one to handle.
    We first compute
    \begin{align*}
        Q(\Lambda)&=(\Phi\Rm+\varphi\mathcal{I})^2+(\Phi\Rm
                +\varphi\mathcal{I})^\# \\
        &=\Phi^2\Rm^2+2\varphi\Phi\Rm+\varphi^2\mathcal{I}+\Phi^2\Rm^\#
                +2\varphi\Phi(\Rm\#\mathcal{I})+\varphi^2\mathcal{I}^\# \\
        &=\Phi^2Q(\Rm)+\varphi\Phi\Ric\owedge\id+3\varphi^2\mathcal{I},
    \end{align*}
    where we have used Lemma \ref{lma op. sharp} in the last equality.
    For any $\zeta\in\wedge^+(M)\text{ or }\wedge^-(M)$ with $|\zeta|=1$,
            there exists a local orthonormal frame $\{e_i\}_{i=1}^4$ such that
            $\zeta=\frac{1}{\sqrt{2}}(e_1\wedge e_2\pm e_3\wedge e_4)$,
            see \cite[Section 2.2]{CX23MathZ} for example.
    It follows that
    \begin{align*}
        (\Ric\owedge\id)(\zeta,\zeta)&=\frac{1}{2}(\Ric\owedge\id)\big(
                e_1\wedge e_2\pm e_3\wedge e_4,
                e_1\wedge e_2\pm e_3\wedge e_4\big) \\
        &=\frac{1}{2}\big(\Ric(e_1,e_1)+\Ric(e_2,e_2)
                +\Ric(e_3,e_3)+\Ric(e_4,e_4)\big) \\
        &=\frac{1}{2}\mathcal{R}.
    \end{align*}
    Then we obtain
    \begin{align*}
        X_\xi^{Q(\Lambda)}(x_0,t_1)&=Q(\Lambda)(\xi_1,\xi_1)
                +Q(\Lambda)(\xi_2,\xi_2) \\
        &=\Phi^2Q(\Rm)(\xi_1,\xi_1)+\Phi^2Q(\Rm)(\xi_2,\xi_2)
                +\varphi\Phi\mathcal{R}+6\varphi^2 \\
        &=\Phi^2X_\xi^{Q(\Rm)}+(\Phi\mathcal{R}+6\varphi)\varphi,
    \end{align*}
    Similarly, we have
    \[
        Y_\xi^{Q(\Lambda)}(x_0,t_1)=\Phi^2Y_\xi^{Q(\Rm)}
                +(\Phi\mathcal{R}+6\varphi)\varphi.
    \]
    Next, we compute that
    \begin{align*}
        Z_\xi^{Q(\Lambda)}(x_0,t_1)&=Q(\Lambda)(\xi_7,\xi_3)
                +Q(\Lambda)(\xi_8,\xi_4) \\
        &=\Phi^2Z_\xi^{Q(\Rm)}
                +\varphi\Phi(\mathring{\Rc}\owedge\id)(\xi_7,\xi_3)
                +\varphi\Phi(\mathring{\Rc}\owedge\id)(\xi_8,\xi_4) \\
        &=\Phi^2Z_\xi^{Q(\Rm)}+2\varphi\Phi Z_\xi^{\Rm} \\
        &=\Phi^2Z_\xi^{Q(\Rm)}+2\varphi Z,
    \end{align*}
    where we have used \eqref{eqn rc owedge g} and \eqref{eqn Rm u v} in the
            third equality.

    On the other hand, by Lemma \ref{lma null vector} \textup{(\romannumeral2)},
            we have at $(x_0,t_1)$,
    \begin{equation}\begin{aligned} \label{eqn box comp.1}
        0&\le\eta YX_\xi^{Q(\Lambda)}+\eta XY_\xi^{Q(\Lambda)}
                -2ZZ_\xi^{Q(\Lambda)} \\
        &\le\Phi^2(\eta YX_\xi^{Q(\Rm)}+\eta XY_\xi^{Q(\Rm)}-2ZZ_\xi^{Q(\Rm)}) \\
                &\qquad+\eta(X+Y)(\Phi\mathcal{R}+6\varphi)\varphi.
    \end{aligned}\end{equation}
    Combining with \eqref{eqn order 2}, we arrive at
    \begin{equation} \label{eqn varphi prime}
        \varphi'\le\left(\frac{\Box\Phi}{\Phi}+2\frac{|\nabla\Phi|^2}{\Phi^2}
                +\mathcal{R}+\frac{6\varphi}{\Phi}\right)\varphi.
    \end{equation}

\dummyline{0.5}

    Since the case $2^\circ$ is simpler and the case $3^\circ$ is exactly
            the same as the case $2^\circ$, we only prove the case $2^\circ$
            briefly here.
    Suppose $\xi\in\Theta_{x_0,t_1}(M)$ satisfies $\mu X^\Lambda(\xi)
            -W^\Lambda(\xi)=0$ and we extend it to a local section.
    Denote
    \[\left\{\;\begin{aligned}
        X&:=X_{\xi}^{\Lambda}=\Phi X_{\xi}^{\Rm}+2\varphi=:\Phi\x+2\varphi; \\
        W&:=W_{\xi}^{\Lambda}=\Phi W_{\xi}^{\Rm}+2\varphi=:\Phi\w+2\varphi.
    \end{aligned}\right.\]
    Since $\mu>1$ and $\varphi>0$, we have $\Phi(x_0,t_1)>0$.
    Then at $(x_0,t_1)$, we have
    \begin{align}
        0&=\mu X-W=\Phi(\mu\x-\w)+2\varphi(\mu-1), \label{eqn case2 order 0} \\
        0&=\nabla^{g(t_1)}(\mu X-W)=\Phi(\mu\nabla\x-\nabla\w)
                +\nabla\Phi(\mu\x-\w) \notag \\
        &= \Phi(\mu\nabla\x-\nabla\w)
                -2(\mu-1)\varphi\frac{\nabla\Phi}{\Phi}, \notag
    \end{align}
    and
    \begin{align*}
        0&\ge\Boxg{g(t_1)}(\mu X-W) \\
        &\ge\Box\Phi(\mu\x-\w)-2\langle\nabla\Phi,\mu\nabla\x-\nabla\w\rangle
                +\Phi(\mu\Box\x-\Box\w)+2(\mu-1)\varphi' \\
        &\ge-2(\mu-1)\varphi\frac{\Box\Phi}{\Phi}-4(\mu-1)\varphi
                \frac{|\nabla\Phi|^2}{\Phi^2}+2(\mu-1)\varphi' \\
                &\qquad+2\Phi(\mu X_\xi^{Q(\Rm)}-W_\xi^{Q(\Rm)}).
    \end{align*}
    By Lemma \ref{lma null vector} \textup{(\romannumeral3)},
            we have at $(x_0,t_1)$,
    \begin{equation}\begin{aligned} \label{eqn box comp.2}
        0&\le\mu X_\xi^{Q(\Lambda)}-W_\xi^{Q(\Lambda)} \\
        &=\Phi^2(\mu X_\xi^{Q(\Rm)}-W_\xi^{Q(\Rm)})
                +(\mu-1)(\Phi\mathcal{R}+6\varphi)\varphi.
    \end{aligned}\end{equation}
    Combining the two inequalities above, we obtain \eqref{eqn varphi prime}.

\dummyline{0.5}

    Therefore, we have established \eqref{eqn varphi prime} in all cases.
    We claim that there exists constant $C_{\eta}\ge1$ such that at $(x_0,t_1)$,
    \begin{equation} \label{eqn varphi over Phi}
        |\mathcal{R}|+\frac{6\varphi}{\Phi}\le C_{\eta}|\Rm|_{g(t_1)}.
    \end{equation}

    Indeed, for the case $1^\circ$, we have $\eta XY=Z^2$ at $(x_0,t_1)$, which
            implies either $\sqrt{\eta}X\le Z$ or $\sqrt{\eta}Y\le Z$.
    We may assume $\sqrt{\eta}X\le Z$ since $\sqrt{\eta}Y\le Z$ is similar.
    Then it holds $\sqrt{\eta}(\Phi\x+2\varphi)\le\Phi\z$ and hence
            $\frac{\varphi}{\Phi}\le\frac{\z}{2\sqrt{\eta}}-\frac{\x}{2}\le
            (1+\frac{1}{\sqrt{\eta}})|\Rm|$.
    For the last two cases, we only show the case $2^\circ$.
    It suffices to note that by \eqref{eqn case2 order 0}, we have
            $\frac{\varphi}{\Phi}=-\frac{\mu\x-\w}{2(\mu-1)}
            \le\frac{\mu+1}{\mu-1}\cdot|\Rm|
            \le(1+\frac{2}{\eta})\cdot|\Rm|$.

    Plugging the claim into \eqref{eqn varphi prime}, we have proved that
            at $(x_0,t_1)$,
    \begin{equation} \label{eqn varphi prime 2}
        \varphi'\le\left(\frac{\Box\Phi}{\Phi}+2\frac{|\nabla\Phi|^2}{\Phi^2}
                +C_{\eta}|\Rm|_{g(t_1)}\right)\varphi.
    \end{equation}
    which is the same form as \cite[(2.2)]{HL21JGA}.
    The remaining proof is almost identical to that of
            \cite[Theorem 2.1]{HL21JGA}.
    We would like to provide a more detailed proof here for readers' convenience.

\dummyline{0.5}

    Let $\tilde{d}(\cdot,t):=d_{g(t)}(\cdot,p)+10\sqrt{at}$ and
            $\Phi(\cdot,t):=\phi(\tilde{d}(\cdot,t))$, where $\phi$ is
            some cut-off function defined on $[0,+\infty)$ to be chosen.
    We may assume $\Phi$ to be smooth when applying the maximum principle since we
            can use Calabi's trick, see \cite{HT18AJM} for example,  to take care
            of the case when the extremum occurs at a point in the cut locus of $p$.
    By applying \cite[Lemma 8.3 (a)]{Perelman02arXiv} with $K=\frac{a}{t}$ and
            $r_0=\sqrt{\frac{3t}{2a}}<\sqrt{t}$, we have
    \[
        \Boxg{g(t)}\tilde{d}\ge-6\sqrt{\frac{2a}{3t}}
                +5\sqrt{\frac{a}{t}}>0,
    \]
    in the sense of barrier, whenever $\tilde{d}\ge\sqrt{t}+10\sqrt{at}$.

    We first let $\phi$ be identical to 1 on $[0,r+2\sigma]$ and vanishing
            outside $[0,r+3\sigma]$.
    Then
    \begin{equation} \label{eqn box Phi}
        \Boxg{g(t)}\Phi=-\phi''+\phi'\Box\tilde{d}\le-\phi'',
    \end{equation}
    whenever $\tilde{d}\ge\sqrt{t}+10\sqrt{at}$ or $\tilde{d}\le r+2\sigma$.
    We may assume \eqref{eqn box Phi} holds at $(x_0,t_1)$, otherwise we have
            $r+2\sigma<\tilde{d}(x_0,t_1)<\sqrt{t_1}+10\sqrt{at_1}$, which
            means
    \begin{equation} \label{eqn t1 bound 1}
        t_1>\frac{(r+2\sigma)^2}{(1+10\sqrt{a})^2}\ge c_1\sigma^2a^{-1},
    \end{equation}
    where $c_1>0$ is an absolute constant.

\dummyline{0.5}

\begin{claim*}
    For any $\alpha>0$, there exists $\delta>0$ depending on $\alpha,\sigma$
            and the initial metric such that for all $t\in[0,\delta]$,
            $\tilde{d}\le r+2\sigma$,
    \[
        \Rm_{g(t)}+t^\alpha\mathcal{I}_{g(t)}\in\mathfrak{C}_{\eta,\mu}.
    \]
\end{claim*}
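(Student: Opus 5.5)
The claim is a qualitative statement on a fixed compact set, for an arbitrary exponent $\alpha$. So I would rerun the maximum principle argument already set up in this proof, but now use that the flow is smooth up to $t=0$ on the compact set $\overline{B_{g(0)}(p,r+4\sigma)}$. Hence there is $K$, depending on the initial metric, with $|\Rm|_{g(t)}\le K$ there for $t\in[0,T]$. This replaces the bound $a/t$, which is useless near $t=0$. The idea is to choose $\Phi$ and $\varphi$ adapted to a power $m>\alpha$, so that \eqref{eqn varphi prime 2} contradicts an ODE that $\varphi$ satisfies strictly.

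\emph{Choice of cutoff.} Fix an integer $m>\alpha$, $m\ge 2$. Take a cutoff $\phi$ on $[0,\infty)$ with $\phi\equiv1$ on $[0,r+2\sigma]$ and $\phi\equiv0$ outside $[0,r+3\sigma]$, such that
\[
\phi''\ge -C\phi^{1-1/m}\quad\text{and}\quad |\phi'|^2\le C\phi^{2-1/m},
\]
with $C=C(m,\sigma)$. For example, $\phi=\psi^{m}$ for a standard cutoff $\psi$ has this property; the appendix on cut-off functions should supply such $\phi$. Set $\Phi=\phi(d_{g(t)}(\cdot,p))$. For $t\le\delta$ small, the Laplacian comparison on the annulus where $\phi'\ne0$ uses the bound $K$. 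Together with Calabi's trick, this gives
\[
\frac{\Box\Phi}{\Phi}+2\frac{|\nabla\Phi|^2}{\Phi^2}\le C_1\Phi^{-1/m}
\]
with $C_1=C_1(m,\sigma,K)$.

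\emph{Using the touching point.} Let $\Lambda=\Phi\Rm+\varphi\mathcal{I}$. At a first touching point $(x_0,t_1)$, estimate \eqref{eqn varphi over Phi} gives $\varphi/\Phi\le C_\eta K$, that is, $\Phi^{-1}\le C_\eta K/\varphi$. So \eqref{eqn varphi prime 2} becomes
\[
\varphi'(t_1)\le C_2\,\varphi^{1-1/m}+C_\eta K\varphi\le C_3\varphi^{1-1/m},
\]
the last step valid as long as $\varphi\le1$. Now choose
\[
\varphi_\varepsilon(t)=(\varepsilon+ct)^m,\qquad \varepsilon\in(0,1),
\]
with $c$ so large that $cm>C_3$. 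Then $\varphi_\varepsilon'=cm\varphi_\varepsilon^{1-1/m}>C_3\varphi_\varepsilon^{1-1/m}$. This is a contradiction as long as $\varphi_\varepsilon\le1$, say for $t\le\delta_1$ with $2c\delta_1\le1$. At $t=0$ we have $\Lambda(0)=\Phi\Rm_{g(0)}+\varepsilon^m\mathcal{I}$. Since $\Rm_{g(0)}\in\mathfrak{C}_{\eta,\mu}$, $\mathcal{I}\in\interior(\mathfrak{C}_{\eta,\mu})$ and the cone is convex, $\Lambda(0)$ lies in the interior. Hence no first touching time exists in $(0,\delta_1]$, and $\Lambda(t)\in\mathfrak{C}_{\eta,\mu}$ there.

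\emph{Passing to the limit.} On $\tilde d\le r+2\sigma$ we have $d_{g(t)}(\cdot,p)\le r+2\sigma$, so $\Phi=1$ and $\Rm+\varphi_\varepsilon\mathcal{I}\in\mathfrak{C}_{\eta,\mu}$. Let $\varepsilon\to0$ and use that the cone is closed, which gives $\Rm+(ct)^m\mathcal{I}\in\mathfrak{C}_{\eta,\mu}$. Since $m>\alpha$, we have $(ct)^m\le t^\alpha$ for $t\le\delta\le\delta_1$ small. Adding the nonnegative multiple $(t^\alpha-(ct)^m)\mathcal{I}$ keeps us in the convex cone, because $\mathcal{I}\in\mathfrak{C}_{\eta,\mu}$. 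This yields the claim with $\delta=\delta(\alpha,\sigma,K)$.

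The main obstacle is the cutoff step. The terms $\Box\Phi/\Phi$ and $|\nabla\Phi|^2/\Phi^2$ blow up where $\Phi\to0$, and the only way to control them is through the lower bound on $\Phi$ coming from \eqref{eqn varphi over Phi}. This forces the power-type cutoff with $m>\alpha$ and the matching ODE for $\varphi$. I would also double-check that the time derivative of $d_{g(t)}$ and the Laplacian comparison are controlled by $K$ in the barrier sense on the annulus.
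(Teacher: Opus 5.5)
Your proposal is correct and follows essentially the same route as the paper. Both use a uniform curvature bound on the compact region for short time, the touching-point estimate $\varphi/\Phi\le C_\eta|\Rm|$ to turn the power-type cutoff terms into a sublinear power of $\varphi$, and an ODE comparison that rules out an early first touching time. The paper takes $\varphi=(t+s)^{\alpha+1}$ with cutoff exponent $\frac{1}{2(\alpha+1)}$, deduces $t_1\ge s$, and evaluates at $t=s$ using $(2s)^{\alpha+1}\le s^\alpha$; you instead take $\varphi_\varepsilon=(\varepsilon+ct)^m$ with $c$ large and let $\varepsilon\to0$. For that step, restrict to $\varepsilon\le\frac12$ so that $\varphi_\varepsilon\le1$ on $[0,\delta_1]$.
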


\begin{proof}[Proof of Claim]
    Since we are working on a compact subset of $M$, there exists $0<\rho<1/2$
            depending on $\sigma$ and the initial metric such that
            $|\Rm|_{g(t)}\le\rho^{-2}$ for all $t\in[0,\rho^2]$.
    Assume $t_1\le\rho^2$.
    Using \eqref{eqn varphi over Phi}, we have $\Phi\ge C_{\eta}^{-1}
            \rho^2\varphi$ at $(x_0,t_1)$.
    Let $\varepsilon:=\frac{1}{2(\alpha+1)}$.
    By Lemma \ref{fact exist. of cutoff}, we may choose $\phi$ satisfying
            $|\phi'|^2\le\frac{C_0}{4\varepsilon^2\sigma^2}\phi^{2-\varepsilon}$
            and
            $|\phi''|\le\frac{C_0}{2\varepsilon^2\sigma^2}\phi^{1-\varepsilon}$
            for some absolute constant $C_0\ge1$.
    Then \eqref{eqn varphi prime 2} becomes
    \[
        \varphi'\le\left(\frac{C_0}{\varepsilon^2\sigma^2}\phi^{-\varepsilon}
                +C_{\eta}\rho^{-2}\right)\varphi
                \le D_1(1+\varphi^\varepsilon)
                (\varepsilon\varphi^{\varepsilon-1})^{-1},
    \]
    where $D_1>0$ depends on $\varepsilon,\sigma,C_0,C_{\eta}$ and $\rho$.
    For any $0<s\le s_0:=\rho^2\wedge\frac{1}{32D_1^2}$, choose
            $\varphi=(t+s)^{\alpha+1}$.
    Then the inequality above implies at $t=t_1$,
    \[
        \left((t+s)^\frac{1}{2}\right)'=(\varphi^\varepsilon)'\le 2D_1.
    \]
    It follows that $t_1\ge s$.
    Hence $\Phi\Rm_{g(s)}+(2s)^{\alpha+1}\mathcal{I}_{g(s)}\in
            \mathfrak{C}_{\eta,\mu}$ for all $s\le s_0$.
    If we shrink $s_0$ if necessary to ensure $2^{\alpha+1}s_0\le1$, then
            $(2s)^{\alpha+1}\le2^{\alpha+1}s_0s^\alpha\le s^\alpha$,
            which completes the proof of the claim.
\end{proof}

\dummyline{0.5}

    Now we prove the theorem for $k\ge2C_{\eta} a\ge6$.
    In this case $\hat{k}=\max\{k,2a\}=k$.
    Choose $\varphi(t)=t^k$.
    Although $\varphi(0)=0$, by the claim above, we know that
            $\Lambda(t)\in\mathfrak{C}_{\eta,\mu}$ for small $t>0$.
    Let $\varepsilon:=\frac{1}{2k+3}>\frac{1}{3k}$ and $\phi$ be
            identical to $1$ on $[0,r+\sigma]$,
            vanishing outside $[0,r+2\sigma]$ and satisfying
            $|\phi'|^2\le\frac{C_0}{4\varepsilon^2\sigma^2}\phi^{2-\varepsilon}$
            and
            $|\phi''|\le\frac{C_0}{2\varepsilon^2\sigma^2}\phi^{1-\varepsilon}$, also  due to Lemma \ref{fact exist. of cutoff}.
    We may assume \eqref{eqn box Phi} holds at $(x_0,t_1)$, otherwise we obtain
            \eqref{eqn t1 bound 1} by the same argument.
    By \eqref{eqn varphi over Phi} and $|\Rm|_{g(t)}\le\frac{a}{t}$, we have
            $\Phi^{-1}\le C_{\eta}a\varphi^{-1}/t_1$.
    Then \eqref{eqn varphi prime 2} becomes
    \[
        \varphi'\le\frac{C_{\eta}a}{t_1}\varphi
                +\frac{C_0(C_{\eta}a)^\varepsilon}{\varepsilon^2
                \sigma^2t_1^\varepsilon}\varphi^{1-\varepsilon}.
    \]
    By the choice of $\varphi$, it follows that
    \begin{equation}\begin{aligned} \label{eqn t1 lower bound}
        t_1&\ge\left(\frac{\varepsilon^2\sigma^2(k-C_{\eta}a)}
                {C_0(C_{\eta}a)^\varepsilon}\right)^\frac{1}{1-\varepsilon(k+1)} \\
        &\ge\left(\frac{\sigma^2}{9C_0k^2}\left(\frac{k}{2}
                \right)^\frac{2k+2}{2k+3}\right)^\frac{2k+3}{k+2}
                \ge \frac{\sigma^{4-\frac{2}{k+2}}}{(18C_0)^2k^2}
                =\frac{\sigma^{4-\frac{2}{\hat{k}+2}}}{(18C_0)^2\hat{k}^2},
    \end{aligned}\end{equation}
    where we have used $C_{\eta}a\le k/2$, $C_0\ge1$ and $k\ge6$.
    Combining the definition of $\Phi$, we have established that for any
            $k\ge2C_{\eta}a$, it holds that
    \[
        \Rm_{g(t)}+t^k\mathcal{I}_{g(t)}\in\mathfrak{C}_{\eta,\mu}
                \text{ on }B_{g(t)}(p,r+\sigma-10\sqrt{at}),
    \]
    for all $t\le T\wedge c_1\sigma^2a^{-1}\wedge t_1$, where $t_1$ satisfies
            \eqref{eqn t1 lower bound}.

    For the case $1\le k<2C_{\eta}a$, we may assume $t_1\le1$.
    Then $C_{\eta}\hat{k}\ge2C_{\eta}a$
            and $t^k\ge t^{C_{\eta}\hat{k}}$ for all $t\le t_1$.
    By applying what we just proved to $C_{\eta}\hat{k}$, we obtain that
    \[
        \Rm_{g(t)}+t^{C_{\eta}\hat{k}}\mathcal{I}_{g(t)}
                \in\mathfrak{C}_{\eta,\mu}\text{ and hence }
                \Rm_{g(t)}+t^k\mathcal{I}_{g(t)}
                \in\mathfrak{C}_{\eta,\mu},
    \]
    for all $t\le T\wedge c_1\sigma^2a^{-1}\wedge t_1$, where $t_1$ satisfies

    \[
        t_1\ge\frac{\sigma^{4-\frac{2}{C_{\eta}\hat{k}+2}}}
                {(18C_0)^2(C_{\eta}\hat{k})^2}
                \ge\frac{\min\{\sigma^4,\sigma^{4-\frac{2}{\hat{k}+2}}\}}
                {(18C_0)^2C_{\eta}^2\hat{k}^2}.
    \]

\dummyline{0.5}

    Therefore, we have shown that
    \[
        \Rm_{g(t)}+t^k\mathcal{I}_{g(t)}\in\mathfrak{C}_{\eta,\mu}
                \text{ on }B_{g(t)}(p,r+\sigma-10\sqrt{at}),
    \]
    for all $t\le T\wedge c_1\sigma^2a^{-1}\wedge
            c_2\min\{\sigma^4,\sigma^{4-\frac{2}{\hat{k}+2}}\}\hat{k}^{-2}$,
            where $c_2>0$ is a constant only depending on $\eta$.
    Shrink $c_1$ if necessary to ensure $10\sqrt{at}\le\sigma$. We
            complete the proof.
\end{proof}

\dummyline{1}

Applying the above local preservation Theorem \ref{thm t power k}, we can obtain the following global preservation of the curvature cone $\mathfrak{C}_{\eta,\mu}$ and a uniqueness result for Ricci flow. 

\begin{corollary} \label{cor preseve.}
    Let $(M, g(t)),t\in[0,T]$ be a complete solution to the $4$-dimensional Ricci flow with
            $g(0)=g_0$ and
    \[
        |\Rm|(x,t)\le\frac{a(1+d_{g_0}(x,p)^2)}{t},
    \]
    for some $a\ge3$, $p\in M$ and for all $x\in M$.
    If $\Rm_{g_0}\in\mathfrak{C}_{\eta,\mu}$ for some $\mu-1\ge\eta\ge0$
            and $\mu>1$, then $\Rm_{g(t)}\in \mathfrak{C}_{\eta,\mu}$
            for all $t\in[0,T]$.
    In particular, if $\Rm_{g_0}$ is flat, then $\Rm_{g(t)}$ is flat for all
            $t\in[0,T]$.
\end{corollary}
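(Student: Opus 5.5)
The plan is to apply Theorem \ref{thm t power k} on larger and larger balls, choosing the parameters so that the admissible time does not shrink as the radius grows. I then remove the correction term $t^k\mathcal{I}$ and iterate in time. Two special situations need separate handling: the case $\eta=0$, which Theorem \ref{thm t power k} does not cover, and the flatness statement.

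\emph{Step 1: a uniform short time, assuming $\eta>0$.}
\begin{itemize}
\item Fix $r\ge1$, set $\sigma=r$, and put $R=10r$. On $B_{g_0}(p,R)$ the hypothesis gives $|\Rm|\le a_r/t$ with $a_r:=a(1+R^2)$, which is comparable to $a r^2$.
\item I need $B_{g(t)}(p,5r)\subset B_{g_0}(p,R)$ for $t\le c/a_r$. For this I would use the distance-shrinking estimate \cite[Lemma 8.3 (b)]{Perelman02arXiv}, $d_{g(t)}\ge d_{g_0}-C\sqrt{a_r t}$. I would run it as a continuity argument: as long as the ball stays inside the region where $|\Rm|\le a_r/t$, the distance to $p$ decreases by at most $C\sqrt{a_rt}<r$. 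Completeness gives the relative compactness needed in the theorem.
\item Theorem \ref{thm t power k} then gives $\Rm_{g(t)}+t^k\mathcal{I}\in\mathfrak{C}_{\eta,\mu}$ on $B_{g(t)}(p,r)$ for $t\le T\wedge c_1\sigma^2a_r^{-1}\wedge c_2\min\{\sigma^4,\sigma^{4-\frac{2}{\hat k+2}}\}\hat k^{-2}$.
\item With $\sigma=r$ and $\hat k=\max\{k,2a_r\}$, we have $\sigma^2/a_r\approx 1/a$. Once $r$ is so large that $2a_r\ge k$, also $\sigma^4/\hat k^2\approx 1/a^2$.
\end{itemize}
So there is $\tau_0=\tau_0(a,\eta)>0$, independent of $r$ and $k$, such that $\Rm_{g(t)}+t^k\mathcal{I}\in\mathfrak{C}_{\eta,\mu}$ on $B_{g(t)}(p,r)$ for all $t\le T\wedge\tau_0$ and all large $r$.

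Letting $r\to\infty$ gives the statement on all of $M$ for every $k$. Since $\tau_0<1$ and the cone is closed, letting $k\to\infty$ yields $\Rm_{g(t)}\in\mathfrak{C}_{\eta,\mu}$ on $[0,T\wedge\tau_0]$.

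\emph{Step 2: iteration in time.} Restart the flow at $\tau_0$. For $t\ge\tau_0$ we have $a/t\le a/(t-\tau_0)$, so the shifted flow satisfies the same type of bound $a(1+d_{g_0}(x,p)^2)/(t-\tau_0)$. I keep measuring distances with $d_{g_0}$ and redo the containment argument of Step 1 using $d_{g(\tau_0)}$ versus $d_{g_0}$ via the same shrinking lemma. Because $\tau_0$ depends only on $a$ and $\eta$, finitely many steps reach $T$.

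\emph{Step 3: the case $\eta=0$.} If $\mu-1\ge\eta'>0$, then $\mathfrak{C}_{0,\mu}\subset\mathfrak{C}_{\eta',\mu}$. Hence Steps 1 and 2 give $\Rm_{g(t)}\in\mathfrak{C}_{\eta',\mu}$ for every small $\eta'>0$. The last two conditions of the cone force $A_1+A_2\ge0$ and $C_1+C_2\ge0$. Letting $\eta'\to0$ gives $(B_2+B_3)^2\le0$, so $\Rm_{g(t)}\in\mathfrak{C}_{0,\mu}$.

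\emph{Step 4: flatness.} A flat metric lies in $\mathfrak{C}_{0,1^+}$. Applying the result for every $\mu>1$ and using \eqref{eqn C01} gives $\Rm_{g(t)}=k(x,t)\mathcal{I}$ with $k\ge0$. By Schur's lemma $k=k(t)$ is constant in space. If $k(t)>0$, then $(M,g(t))$ is a complete space form of positive curvature, hence compact with finite fundamental group. But $M$ carries the complete flat metric $g_0$. A compact flat manifold has infinite fundamental group, contradicting finiteness of $\pi_1(M)$, while a noncompact $M$ contradicts compactness. Hence $k\equiv0$.

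\emph{Main obstacle.} I expect the distance comparison in Step 1 (and its repetition in Step 2) to be the delicate point. The curvature bound is only known in terms of $d_{g_0}$, while Theorem \ref{thm t power k} is phrased on $g(t)$-balls. So one must make sure that $g(t)$-balls of radius about $5r$ stay inside the $g_0$-ball where $|\Rm|\le a_r/t$, on a time interval of length comparable to $1/a$ that is independent of $r$. The choice $\sigma=r$ is exactly what makes both time restrictions in Theorem \ref{thm t power k} scale-free.
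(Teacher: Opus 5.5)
Your Steps 1--3 are essentially the paper's proof. The paper also uses the shrinking balls lemma, which is the rigorous form of your continuity argument with Perelman's Lemma 8.3(b), to get $B_{g(t)}(p,5\sigma)\subset B_{g_0}(p,\sqrt{(11\sigma)^2-1})$ for $t\le T\wedge\frac{1}{4a\beta^2}$. It then applies Theorem~\ref{thm t power k} with $r=\sigma$ and $a$ replaced by $a(11\sigma)^2$, so that both time restrictions become $\sigma$-independent; it restarts, and it handles $\eta=0$ by letting $\eta\to0$.

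The only difference in these steps is bookkeeping. The paper couples $\sigma$ and $k$ through $2(11\sigma)^2a=-\log\varepsilon$ and $k=\lfloor 2(11\sigma)^2a\rfloor+1$, so that $t^k\le\varepsilon$ for $t\le e^{-1}$. You instead send $r\to\infty$ at fixed $k$ and then $k\to\infty$. That is fine, since $\hat k=2a_r$ no longer depends on $k$. One small fix: in your second bullet the containment window should be $t\le c/a$ (i.e.\ $cR^2/a_r$), not $c/a_r$; otherwise $\tau_0$ would not be $r$-independent.

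Step 4 is a genuinely different argument. The paper writes $\Rm_{g(t)}=\varphi(t)\mathcal{I}$, notes that $\varphi$ is bounded on $[0,T]$ by continuity, and invokes Chen--Zhu uniqueness of complete bounded-curvature flows to compare with the static flow $g_0$. You argue topologically: if $k(t)>0$, Myers gives $M$ compact with finite $\pi_1$. A compact flat manifold has infinite $\pi_1$, since its universal cover is $\mathbb{R}^4$. Your route is more elementary and needs neither the uniqueness theorem nor the bound on $\varphi$. The paper's route gives $g(t)\equiv g_0$ directly, which you also recover afterwards since $\Ric\equiv0$.

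The restart in Step 2 needs more than is written, in your sketch and equally in the paper's ``if $T_2<T$, we can repeat the argument''.
\begin{itemize}
\item If you keep $d_{g_0}$ as the reference, the containment $B_{g(t)}(p,5r)\subset B_{g_0}(p,10r)$ at later times still has to come from a shrinking estimate started at time $0$. Its loss is of order $r\sqrt{at}$, because the curvature bound near $g_0$-distance $10r$ is $a(1+100r^2)/t$. This loss stays below $5r$ only for $t\lesssim 1/(a\beta^2)$, however you subdivide the interval.
\item If you instead rewrite the hypothesis in terms of $d_{g(\tau_0)}$, the constant $a$ gets multiplied by a factor larger than $1$ at each restart. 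The admissible step lengths can then shrink geometrically and sum to less than $T$.
\end{itemize}
So ``$\tau_0$ depends only on $a,\eta$, hence finitely many steps reach $T$'' is justified only on $[0,T\wedge c/(a\beta^2)]$. Within that window, restarts with your $\tau_0$ do work.

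This gap is harmless where the corollary is actually used (Corollary~\ref{cor global exist.0} and Theorem~\ref{thm diff. R4}), because there $|\Rm|\le D_1/t$ on all of $M$. That bound is scale invariant, so after rescaling to $T=1$ you can let $\sigma\to\infty$ in Theorem~\ref{thm t power k} with no containment issue and cover $[0,T]$ in one step. It is also harmless for the flatness statement: once $g(t)\equiv g_0$ on an initial interval, the restarted flow satisfies the original hypothesis verbatim, so the iteration goes through.
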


\dummyline{1}

\begin{proof}  The proof adapts ideas in the proof of \cite[Corollary 2.1]{HL21JGA}.
Without loss of generality, we may assume $\eta>0$ since the case $\eta=0$
            can be handled by letting $\eta\rightarrow0$.
            Let $T_0=T\wedge\frac{1}{4a\beta^2}\le e^{-1}$, where $\beta=24\sqrt{2/3}$ is the
            constant in the shrinking balls lemma \cite[Corollary 3.3]{ST22JDG}.
    We claim that for any $\sigma\ge1$, we have
    \[
        B_{g(t)}(p,\sigma+4\sigma)\subset B_{g_0}(p,\sqrt{(11\sigma)^2-1})
                \text{ for }t\le T_0.
    \]
    Indeed, since we have $|\Rm|_{g(t)}\le\frac{a(11\sigma)^2}{t}$ in
            $B_{g_0}(p,\sqrt{(11\sigma)^2-1})$, by the shrinking balls lemma \cite[Corollary 3.3]{ST22JDG},
            it suffices to show that $5\sigma\le\sqrt{(11\sigma)^2-1}
            -\beta\sqrt{(11\sigma)^2at}$ for $t\le T_0$.
    This is true by our choice of $T_0\le\frac{1}{4a\beta^2}$ and $\sigma\ge1$.
    
    Now we can apply Theorem \ref{thm t power k} to $B_{g(t)}(p,\sigma+4\sigma)$
            to show that for any $k\in\mathbb{N}$, it holds that
    \[
        \Rm_{g(t)}+t^k\mathcal{I}_{g(t)}\in\mathfrak{C}_{\eta,\mu}
                \text{ on }B_{g(t)}(p,\sigma),
    \]
    for $t\le T_1\wedge c_2\sigma^{4-\frac{2}{\hat{k}+2}}\hat{k}^{-2}$,
            where $T_1:=T_0\wedge\frac{c_1}{(11^2a)}$
            and $\hat{k}:=\max\{2(11\sigma)^2a,k\}$.

    For any small $\varepsilon>0$, we can choose $\sigma\ge1$ satisfying
            $2(11\sigma)^2a=-\log\varepsilon$ and then take
            $k=\lfloor2(11\sigma)^2a\rfloor+1$.
    Then $t^k\le (e^{-1})^k\le(e^{-1})^{-\log\varepsilon}=\varepsilon$
            and hence
    \[
        \Rm_{g(t)}+\varepsilon\mathcal{I}_{g(t)}\in\mathfrak{C}_{\eta,\mu}
                \text{ on }B_{g(t)}(p,\sigma),
    \]
    for $t\le T_1\wedge c_2\sigma^{4-\frac{2}{k+2}}k^{-2}$.

    Letting $\varepsilon\to0$, we have $\sigma\to+\infty$ and
            $\sigma^{4-\frac{2}{k+2}}k^{-2}$ converges to some constant $C_a>0$,
            since $k=\lfloor2(11\sigma)^2a\rfloor+1$.
    Then $\Rm_{g(t)}\in\mathfrak{C}_{\eta,\mu}$ for all $t\in[0,T_2]$,
            where $T_2:=T_1\wedge c_2C_a/2$.
    If $T_2<T$, we can repeat the argument to show it holds for all $t\in[0,T]$.

    If $\Rm_{g_0}$ is flat, then $\Rm_{g_0}\in\mathfrak{C}_{\eta,\mu}$ for all
            $\mu-1\ge\eta\ge0$ and $\mu>1$, i.e. $\Rm_{g_0}\in\mathfrak{C}
            _{0,1^+}$, and hence $\Rm_{g(t)}\in\mathfrak{C}_{0,1^+}$.
    By \eqref{eqn C01} and Schur's lemma, we can write $\Rm_{g(t)}=\varphi(t)\mathcal{I}_{g(t)}$
            with $\varphi(0)=0$.
    By the continuity of $\varphi$, we have $\varphi$ is uniformly bounded on
            $[0,T]$.
    By the uniqueness of Ricci flow \cite[Theorem 1.1]{CZ06JDG}, it follows that
            $\varphi(t)\equiv0$ for all $t\in[0,T]$.
    This completes the proof.
\end{proof}

\dummyline{1}

\dummynewpage
\section{Curvature estimates} \label{cur-est}

We first recall a notion of curvature.
\begin{definition}[2-nonnegative flag curvature {\cite{BS10ALM}}]
        \label{def 2-non flag}
    For $n\ge3$, we say $(M^n,g)$ or $\Rm_{g}$ has 2-nonnegative flag
            curvature if
    \[
        R_{1313}+R_{2323}\ge0,
    \]
    for all orthonormal $3$-frames $\{e_1,e_2,e_3\}$.
\end{definition}

\dummyline{0.5}

\begin{remark}
    This is the case $\lambda=0$ in the characterization of
            $\mathfrak{C}_\mathrm{WPIC1}$ and a curvature operator with
            2-nonnegative flag curvature has nonnegative Ricci curvature. See \cite[Corollary 7.15]{Brendle10Book} for example. 
\end{remark}

\dummyline{1}

The following lemma shows the curvature cone $\mathfrak{C}_{\eta,\mu}$ implies 2-nonnegative flag curvature if $\eta\leq1$.

\begin{lemma} \label{lma 2-non flag}
    Curvature operators in $\mathfrak{C}_{\eta,\mu}$ with $\eta\in[0,1]$,
            $\mu\ge\eta+1$ and $\mu>1$ have 2-nonnegative flag curvature.
\end{lemma}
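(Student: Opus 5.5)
The plan is to express the flag-curvature sum $R_{1313}+R_{2323}$ through the block decomposition \eqref{eqn decomp.4mfd.}, and then to use the three defining inequalities of $\mathfrak{C}_{\eta,\mu}$ directly. Fix a point and an orthonormal $3$-frame $\{e_1,e_2,e_3\}$, and let $W=\mathrm{span}\{e_1,e_2,e_3\}$. Using the construction in Section \ref{sect decomp. 4mfd.}, take an orthonormal basis $\{w_1,w_2,w_3\}$ of $\wedge^2W$ in which, up to sign, $w_1=e_3\wedge e_2$ and $w_2=e_1\wedge e_3$. Set $\varphi^\pm_\alpha=\frac{1}{\sqrt2}(w_\alpha\pm\star w_\alpha)$, so that $w_\alpha=\frac{1}{\sqrt2}(\varphi^+_\alpha+\varphi^-_\alpha)$. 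Then $\{\varphi^+_1,\varphi^+_2\}$ is orthonormal in $\wedge^+$ and $\{\varphi^-_1,\varphi^-_2\}$ is orthonormal in $\wedge^-$.

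Next we expand using \eqref{eqn Rm u v}. For each $\alpha$,
\[
\Rm(w_\alpha,w_\alpha)=\tfrac12\bigl(A(\varphi^+_\alpha,\varphi^+_\alpha)+2B(\varphi^-_\alpha,\varphi^+_\alpha)+C(\varphi^-_\alpha,\varphi^-_\alpha)\bigr),
\]
and the signs in $w_\alpha$ do not matter since this expression is quadratic. Summing over $\alpha=1,2$ gives
\[
R_{1313}+R_{2323}=\tfrac12\bigl(X+Y+2Z\bigr),
\]
where $X=\sum_{\alpha\le2}A(\varphi^+_\alpha,\varphi^+_\alpha)$, $Y=\sum_{\alpha\le2}C(\varphi^-_\alpha,\varphi^-_\alpha)$ and $Z=\sum_{\alpha\le2}B(\varphi^-_\alpha,\varphi^+_\alpha)$. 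These are exactly the quantities $X^{\Rm},Y^{\Rm},Z^{\Rm}$ of Section \ref{pre-cone}, evaluated at a suitable $\xi$.

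The estimates are as follows.
\begin{itemize}
\item By the Ky Fan minimum principle, $X\ge A_1+A_2$ and $Y\ge C_1+C_2$.
\item For a pair of orthonormal $2$-frames, the sum of two matrix entries of $B$ is bounded in absolute value by the sum of the two largest singular values. This is the Ky Fan/von Neumann trace inequality, and it gives $Z\ge-(B_2+B_3)$.
\item As observed after \eqref{eqn def.2 cone}, $\hat F^2,\hat F^3\ge0$ force $x:=A_1+A_2\ge0$ and $y:=C_1+C_2\ge0$.
\item The first cone condition gives $B_2+B_3\le\sqrt{\eta xy}\le\sqrt{xy}$, using $\eta\le1$.
\end{itemize}
Combining these,
\[
R_{1313}+R_{2323}\ge\tfrac12\bigl(x+y-2\sqrt{xy}\bigr)=\tfrac12\bigl(\sqrt x-\sqrt y\bigr)^2\ge0 .
\]

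The only step needing care is the bound $|Z|\le B_2+B_3$, which is a singular-value inequality for a partial trace $\tr(P^TBQ)$ with $P,Q$ isometries of rank $2$. We will justify it via von Neumann's trace inequality: the singular values of $P^TBQ$ are dominated by $B_3,B_2$ by interlacing. Everything else is bookkeeping of the normalization factor $\frac1{\sqrt2}$, which we will check against the convention behind $\tr A=\mathcal R/4$.
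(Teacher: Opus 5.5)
Your proposal is correct and follows essentially the same route as the paper: write $R_{1313}+R_{2323}$ in terms of the blocks $A,B,C$ in an adapted basis of $\wedge^\pm$, bound it below by $\tfrac12(A_1+A_2+C_1+C_2-2B_2-2B_3)$, and conclude from $A_1+A_2,\,C_1+C_2\ge0$ and $(B_2+B_3)^2\le\eta(A_1+A_2)(C_1+C_2)\le(A_1+A_2)(C_1+C_2)$ by AM--GM, which is your $(\sqrt{x}-\sqrt{y})^2\ge0$. The only cosmetic difference is the basis: the paper uses a basis in which the $B$-contribution appears as $2B_{22}-2B_{33}$, while you use the Section 2 basis, where it is your $Z$, and you justify $|Z|\le B_2+B_3$ explicitly via von Neumann's trace inequality, a step the paper leaves implicit.
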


\dummyline{0.5}

\begin{proof}
 After choosing a local orientation, we can extend $\{e_1,e_2,e_3\}$ 
            to an orthonormal $4$-frame $\{e_1,e_2,e_3,e_4\}$ locally and
            choose a basis for $\wedge^+$ and $\wedge^-$
            as in \cite[Section 2.2]{CX23MathZ}.
Then we have
    \[\left\{\;\begin{aligned}
        R_{1313}&=\frac{1}{2}(A_{22}+C_{22}+2B_{22}); \\
        R_{2323}&=\frac{1}{2}(A_{33}+C_{33}-2B_{33}).
    \end{aligned}\right.\]
    It follows that
    \[
        R_{1313}+R_{2323}\ge\frac{1}{2}(A_1+A_2+C_1+C_2-2B_2-2B_3).
    \]
    The last two conditions in the definition of $\mathfrak{C}_{\eta,\mu}$ imply
            that $A_1+A_2\ge0$ and $C_1+C_2\ge0$ and hence the first condition
            with $\eta\le1$ implies
    \begin{align*}
        (B_2+B_3)^2&\le(A_1+A_2)(C_1+C_2) \\
        &\le\left(\frac{A_1+A_2+C_1+C_2}{2}\right)^2.
    \end{align*}
    This completes the proof.
\end{proof}

\dummyline{1}

As \cite[Lemma 3.1]{HL21JGA} and \cite[Lemma 2.1]{ST22JDG}, we will show the following local curvature estimate, which is an important ingredient for constructing local Ricci flow. 

\begin{lemma} \label{lma pseudo.}
    For any $v_0,K>0$, $\eta\in[0,1]$, $\mu\ge\eta+1$ and $\mu>1$, there exist
            $\bar{T}(\mu,v_0,K)$, $C_0(\mu,v_0,K)>0$ such that the following holds.
    Suppose $(M,g(t))$ is a $4$-dimensional Ricci flow for $t\in[0,T]$ and $p\in M$ such that
    $B_{g(t)}(p,r)\subset\subset M$ for all $t\in[0,T]$.
    Assume that on $B_{g(t)}(p,r),t\in[0,T]$,
    \[\left\{\;\begin{aligned}
        &\Rm_{g(t)}+Kr^{-2}\mathcal{I}_{g(t)}\in\mathfrak{C}_{\eta,\mu}; \\
        &\VolB_{g(0)}(p,r)\ge v_0r^4.
    \end{aligned}\right.\]
    Then for all $t\in(0,T]\cap(0,\bar{T}r^2]$,
    \[\left\{\;\begin{aligned}
        &|\Rm|_{g(t)}\le\frac{C_0}{t}
                &\quad\text{on }B_{g(t)}\left(p,\frac{r}{8}\right); \\
        &\Inj_{g(t)}\ge\sqrt{C_0^{-1}t}
                &\text{on }B_{g(t)}\left(p,\frac{r}{8}\right).
    \end{aligned}\right.\]
\end{lemma}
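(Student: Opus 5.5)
By scaling we may take $r=1$, since both the cone condition and the conclusion are scaling invariant. We then follow the contradiction and point-picking scheme of \cite[Lemma 2.1]{ST22JDG} and \cite[Lemma 3.1]{HL21JGA}.

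The first step is to translate the cone hypothesis into the two lower bounds used in those works. By Lemma \ref{lma 2-non flag} applied to $\Rm+K\mathcal{I}\in\mathfrak{C}_{\eta,\mu}$, the curvature has 2-nonnegative flag curvature up to an error $2K$: we get $R_{1313}+R_{2323}\ge-2K$, and in particular $\Ric\ge-3K$ on $B_{g(t)}(p,1)$. We also use Hamilton's inclusion $\mathfrak{C}_{\eta,\mu}\subset\mathfrak{C}_{\mathrm{WPIC}}$ quoted in Section \ref{pre-cone}. The Ricci lower bound gives the local volume control we need, namely Bishop--Gromov relative volume comparison together with the distance distortion estimates. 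Those distortion estimates come from \cite[Lemma 8.3]{Perelman02arXiv} and the shrinking/expanding balls lemmas of \cite{ST22JDG}.

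Next, suppose the curvature estimate fails. Then there are flows $g_i(t)$ satisfying the hypotheses with $K$ and $v_0$ fixed, points $x_i\in B_{g_i(t_i)}(p_i,1/8)$, and times $t_i\to0$ such that $t_i|\Rm|(x_i,t_i)\ge C_i\to\infty$. We run the standard point-picking argument (as in \cite[Lemma 2.1]{ST22JDG}) to find nearby $(\bar x_i,\bar t_i)$ with $Q_i:=|\Rm|(\bar x_i,\bar t_i)$ and $\bar t_iQ_i\to\infty$, and with $|\Rm|\le 4Q_i$ on a backward parabolic neighborhood whose size, in rescaled units, tends to infinity. We then rescale by $Q_i$. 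The rescaled flows satisfy $\Rm+KQ_i^{-1}\mathcal{I}\in\mathfrak{C}_{\eta,\mu}$, and this error term tends to $0$. Using the distance distortion control and the Ricci lower bound, the volume lower bound $v_0$ propagates from $g(0)$ to unit-scale balls at later times, up to a constant depending only on $v_0$ and $K$. So the rescaled flows are noncollapsed, and Hamilton's compactness theorem gives a complete ancient limit $g_\infty(t)$. Since $\mathfrak{C}_{\eta,\mu}$ is a closed cone, the limit satisfies $\Rm_{g_\infty}\in\mathfrak{C}_{\eta,\mu}$, has bounded curvature, has $|\Rm|(x_\infty,0)=1$, and has maximal volume growth inherited from the scale-invariant noncollapsing.

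The contradiction then comes from the limit. The cone implies nonnegative Ricci curvature (Lemma \ref{lma 2-non flag}) and weakly PIC. We want to upgrade this to nonnegative curvature operator, or at least to a condition under which known results apply. The key available fact is Corollary \ref{cor preseve.}, which preserves the cone for complete flows. Together with the strong maximum principle encoded in Lemma \ref{lma null vector}, a splitting or asymptotic-soliton analysis should force the Hamilton--Perelman asymptotic volume ratio to vanish. Concretely, we take a blow-down of the ancient solution, which still lies in the closed cone and is noncollapsed. Perelman's result gives $\AVR=0$ for $\kappa$-noncollapsed ancient solutions with bounded nonnegative curvature operator. So the plan is to verify that nonnegative curvature is inherited, or alternatively to use Brendle's results for weakly PIC ancient solutions in dimension 4, which also yield $\AVR=0$. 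This contradicts maximal volume growth.

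The injectivity radius bound then follows from the curvature bound $C_0/t$ and the volume lower bound at scale $\sqrt t$, via Cheeger--Gromov--Taylor.

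The main obstacle is the last step, showing that an ancient noncollapsed limit in $\mathfrak{C}_{\eta,\mu}$ with bounded curvature cannot have maximal volume growth. The cone does not obviously imply nonnegative sectional curvature, so the sectional-curvature version of Perelman's argument must be replaced by the weakly PIC theory, or by using $\mu$-pinching of $A$ and $C$ to control the curvature operator. This explains why $\bar T$ and $C_0$ depend only on $\mu$, $v_0$ and $K$.
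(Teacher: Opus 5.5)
Your reduction matches the paper's. You rescale to $r=1$, get $\Ric\ge-3K$ from Lemma \ref{lma 2-non flag}, propagate the volume bound, point-pick via \cite[Lemma 5.1]{ST22JDG}, and extract a complete, bounded-curvature, non-flat ancient limit in $\mathfrak{C}_{\eta,\mu}$ with Euclidean volume growth. The gap is the step you yourself call the main obstacle: you never show that such a limit cannot exist, and none of your proposed routes closes it. Corollary \ref{cor preseve.} plays no role, because the cone condition is assumed along the flow and passes to the limit by closedness. A blow-down or asymptotic-soliton argument in Perelman's style presupposes nonnegative curvature operator, which is exactly what has to be proved, so that route is circular. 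You also do not identify any theorem giving $\AVR=0$ for ancient solutions that are only weakly PIC with $\Ric\ge0$. The results that do apply, such as \cite[Lemma 4.4]{CL23MathAnn}, require \emph{uniform} PIC.

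The missing idea is that $\mathfrak{C}_{\eta,\mu}$ forces uniform PIC in the sense of \cite[Definition 2.2]{CL23MathAnn}, namely $0<\max\{A_3,B_3,C_3\}\le\Lambda\min\{A_1+A_2,C_1+C_2\}$.

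The upper bound is purely algebraic. If $A_1+A_2\le C_1+C_2$, then $C_1+C_2\le\frac{2}{3}\tr C=\frac{2}{3}\tr A\le A_2+A_3\le\mu(A_1+A_2)$. Hence $A_3\le\mu(A_1+A_2)$, $B_3\le\sqrt{\eta\mu}(A_1+A_2)$ and $C_3\le\mu^2(A_1+A_2)$.

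Strict positivity takes three ingredients. First, the limit is non-flat at every time, by uniqueness \cite{CZ06JDG}. Second, Lemma \ref{lma null vector}(i) then gives $A_1+A_2>0$ at some point. Third, apply the strong maximum principle to $(\partial_t-\Delta)(A_1+A_2)\ge(A_1+A_2)(2A_3+A_1)\ge0$, in the barrier sense, on neighborhoods of curves carrying a consistent local orientation.

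With uniform PIC, \cite[Lemma 4.4]{CL23MathAnn} gives nonnegative curvature operator and \cite[Proposition 11.4]{Perelman02arXiv} gives $\AVR=0$, which contradicts the limiting volume bound. This is the paper's Lemma \ref{lma ancient sol.}.

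Separately, fixing $\eta$ in your contradiction sequence only yields constants that depend on $\eta$, and your closing remark does not justify independence from it. You can either let $\eta_k$ range over $[0,\min\{1,\mu-1\}]$ and pass to a convergent subsequence, as the paper does. Or you can note that $\mathfrak{C}_{\eta,\mu}\subset\mathfrak{C}_{\min\{1,\mu-1\},\mu}$, since $(A_1+A_2)(C_1+C_2)\ge0$ on these cones.
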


\dummyline{1}

Before we prove Lemma \ref{lma pseudo.}, we state two lemmata without proof.  

\begin{lemma}[{\cite[Lemma 2.3]{ST22JDG}}] \label{fact lower vol. ctrl.}
    Suppose $(M^n,g(t))$ is a Ricci flow for $t\in[0,T)$, such that
            $B_{g(t)}(x_0,\gamma)\subset\subset M$ for some $x_0\in M$ and
            $\gamma>0$ for all $t\in[0,T)$.
    Assume that
    \begin{enumerate}
        \item[\textup{(\romannumeral1)}] $\Ric_{g(t)}\ge-K$ on $B_{g(t)}(x_0,
                \gamma)$ for some $K>0$ and all $t\in[0,T)$;
        \item[\textup{(\romannumeral2)}] $|\Rm|_{g(t)}\le\frac{c_0}{t}$ on
                $B_{g(t)}(x_0,\gamma)$ for some $c_0<+\infty$ and all
                $t\in(0,T)$;
        \item[\textup{(\romannumeral3)}] $\VolB_{g(0)}(x_0,\gamma)
                \ge v_0$ for some $v_0>0$.
    \end{enumerate}
    Then there exist $\varepsilon_0(v_0,K,\gamma,n)>0$ and $\hat{T}(v_0,K,
            \gamma,c_0,n)>0$ such that
    \[
        \VolB_{g(t)}(x_0,\gamma)\ge\varepsilon_0
                \text{ for all }t\in[0,T)\cap[0,\hat{T}).
    \]
\end{lemma}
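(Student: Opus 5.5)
The statement to prove is Lemma \ref{fact lower vol. ctrl.}, quoted from \cite[Lemma 2.3]{ST22JDG}; its hypotheses are only a lower Ricci bound, a $c_0/t$ curvature bound and an initial volume bound, with nothing specific to dimension $4$ or to $\mathfrak{C}_{\eta,\mu}$. The plan is to follow Simon--Topping. Volume could in principle shrink under the flow, since $\partial_t d\mu=-\mathcal{R}\,d\mu$ and $\mathcal{R}\le nc_0/t$ is not integrable at $t=0$. So instead of controlling the volume of the whole ball directly, we aim to find a definite subset of the initial ball that stays inside $B_{g(t)}(x_0,\gamma)$ and whose volume we can bound from below in measure.

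\textbf{Step 1 (distance distortion).} From (i), $\partial_t d_{g(t)}(x,y)\le 2K\,d$ whenever the whole flow region is controlled, so distances grow at most exponentially. From (ii), Hamilton--Perelman's lemma (\cite[Lemma 8.3]{Perelman02arXiv}) gives $\partial_t d\ge -C(n)\sqrt{c_0/t}$, which integrates to a shrinkage of at most $\beta\sqrt{c_0t}$; this is the shrinking-balls/expanding-balls lemmas of \cite{ST22JDG}. Consequently, for $t\le\hat T$ small, $B_{g(0)}(x_0,\gamma/2)\subset B_{g(t)}(x_0,\gamma)$ via the Ricci lower bound, since $e^{2Kt}\gamma/2<\gamma$. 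One must make this rigorous while staying inside the compactly contained balls, using a continuity argument in $t$.

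\textbf{Step 2 (volume lower bound).} From Step 1, $\VolB_{g(t)}(x_0,\gamma)\ge\Vol_{g(t)}(B_{g(0)}(x_0,\gamma/2))$. By Bishop--Gromov at $t=0$ with $\Ric\ge-K$, we have $\Vol_{g(0)}B_{g(0)}(x_0,\gamma/2)\ge c(n,K,\gamma)v_0$. The remaining task is to show that the $g(t)$-volume of this fixed set does not collapse.

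\textbf{Step 3 (the main obstacle).} Here we use $\Ric\ge -K$ differently. Under the flow, $\partial_t g=-2\Ric\le 2Kg$, so volumes of fixed sets can decrease only through positive curvature, and no pointwise lower bound is available. Simon--Topping resolve this by working with a smaller ball around each point: they use the $g(t)$-Bishop--Gromov bound together with the fact that $B_{g(t)}(x_0,r)$ contains $B_{g(0)}(x_0,r-\beta\sqrt{c_0t})$ and is contained in $B_{g(0)}(x_0,e^{Kt}r)$. Comparing the volume ratios at scale $\gamma$ and at scale $\sqrt{t}$, and using the uniform $\Ric\ge -K$ at each time, they show that the volume cannot drop below $\varepsilon_0$ before time $\hat T$, via an argument by contradiction with a scale-invariant quantity. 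I would follow their proof verbatim and adjust constants. Since the statement is cited rather than proved here, the paper can legitimately refer the reader to \cite{ST22JDG}; that is what I would do, with the sketch above as the justification.
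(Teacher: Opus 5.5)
You are right that the paper gives no proof of this lemma: it quotes \cite[Lemma 2.3]{ST22JDG}, so deferring to that reference matches the paper. The problem is the sketch you offer as justification. Step 3, which you correctly identify as the whole difficulty, contains no working mechanism, and its inclusions are wrong.

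\textbf{The inclusions.} You swapped the roles of the two distortion estimates.
\begin{itemize}
\item The shrinking-balls lemma limits how much distances can \emph{decrease}: $d_{g(0)}\le d_{g(t)}+\beta\sqrt{c_0t}$, i.e.\ $B_{g(t)}(x_0,r)\subset B_{g(0)}(x_0,r+\beta\sqrt{c_0t})$.
\item The lower Ricci bound limits how much they can \emph{increase}: $d_{g(t)}\le e^{Kt}d_{g(0)}$, i.e.\ $B_{g(0)}(x_0,e^{-Kt}r)\subset B_{g(t)}(x_0,r)$. The rate is $K$, not $2K$.
\item Your inclusion $B_{g(t)}(x_0,r)\subset B_{g(0)}(x_0,e^{Kt}r)$ is false in general. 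A lower Ricci bound does not stop distances from shrinking: on a round sphere of radius $\rho$ with $K<(n-1)/\rho^2$ it already fails.
\end{itemize}

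\textbf{The argument.} Bishop--Gromov at time $t$, distance distortion and packing cannot produce the bound.
\begin{itemize}
\item Bishop--Gromov says the volume ratio at scale $\sqrt t$ is at least the ratio at scale $\gamma$. That is the wrong direction for bounding the scale-$\gamma$ volume from below.
\item A lower volume ratio at scale $\sqrt t$ would suffice, combined with a packing count carried over from time $0$. But $|\Rm|\le c_0/t$ gives no such bound without injectivity radius control.
\item That control is exactly what Lemma \ref{lma pseudo.} extracts \emph{from} the present lemma via Cheeger--Gromov--Taylor, so this route is circular.
\item Coarse metric information of this kind cannot detect collapse. The Ricci lower bound at time $t$ must enter in an essential way, and your sketch never uses it beyond Bishop--Gromov.
\end{itemize}

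\textbf{The missing idea.} What is needed is continuity of volume under Gromov--Hausdorff convergence with Ricci curvature bounded below (Colding \cite{Colding97AnnMath}, in the general form due to Cheeger--Colding). As far as I recall, this is also what the cited proof rests on. The argument runs as follows.
\begin{itemize}
\item Fix $n,K,\gamma,v_0,c_0$, and suppose there are flows and times $t_i\to0$ with $\VolB_{g_i(t_i)}(x_i,\gamma)<\varepsilon_0$.
\item On a slightly smaller ball, the two-sided distortion $d_{g(0)}-\beta\sqrt{c_0t}\le d_{g(t)}\le e^{Kt}d_{g(0)}$ makes the identity map an $o(1)$-Gromov--Hausdorff approximation between the time-$0$ and time-$t_i$ balls of radius $\gamma/2$.
\item Passing to a subsequence, both sequences converge to the same limit ball $B_\infty$.
\item Both sequences satisfy $\Ric\ge-K$, so volume convergence forces both volumes to converge to $\mathcal{H}^n(B_\infty)$.
\item On the time-$0$ side, Bishop--Gromov gives this limit at least $c(n,K,\gamma)v_0$.
\item Choosing $\varepsilon_0=\tfrac12c(n,K,\gamma)v_0$ gives the contradiction.
\end{itemize}
This also explains the constants in the statement: $\varepsilon_0$ does not depend on $c_0$, while $\hat T$ depends on $c_0$ only through how fast the distortion tends to zero. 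Your Steps 1 and 2 are the right preparation, but without this ingredient Step 3 has no content.
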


\dummyline{1}

\begin{lemma}[{\cite[Lemma 5.1]{ST22JDG}}] \label{fact decay or not}
    For $n\ge2$, take $\beta(n)>0$ as in the shrinking balls lemma \cite[Corollary 3.3]{ST22JDG}.
    Suppose $(M^n,g(t))$ is a Ricci flow for $t\in[0,T]$, $x_0\in M$, $r_0>0$
            and $c_0>0$ such that $B_{g(t)}(x_0,r_0)\subset\subset M$ for all
            $t\in[0,T]$.
    Then at least one of the following holds:
    \begin{enumerate}
        \item[\textup{(1)}] For each $t\in(0,T]$ with $t<\frac{r_0^2}{c_0\beta^2
                }$, we have
            \[
                B_{g(t)}(x_0,r_0-\beta\sqrt{c_0t})\subset B_{g(0)}(x_0,r_0)
            \]
            and
            \[
                |\Rm|_{g(t)}<\frac{c_0}{t}\text{ on }B_{g(t)}(
                        x_0,r_0-\beta\sqrt{c_0t}).
            \]
        \item[\textup{(2)}] There exist $\bar{t}\in(0,T]$ with $\bar{t}<
                \frac{r_0^2}{c_0\beta^2}$ and $\bar{x}\in B_{g(\bar{t})}(x_0,r_0
                -\frac{1}{2}\beta\sqrt{c_0\bar{t}})$ such that
            \[
                Q:=|\Rm|(\bar{x},\bar{t})\ge\frac{c_0}{\bar{t}}\text{ and }
                        |\Rm|(x,t)\le4Q,
            \]
            whenever $x\in B_{g(\bar{t})}(\bar{x},\frac{\beta c_0}{8}Q^{-\frac
                    {1}{2}})$ and $t\in[\bar{t}-\frac{1}{8}c_0Q^{-1},\bar{t}]$.
    \end{enumerate}
\end{lemma}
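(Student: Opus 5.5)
This result is quoted from \cite[Lemma 5.1]{ST22JDG}. If I were to prove it, I would argue by contradiction: assume (1) fails, and then produce the point of (2) by a point-picking iteration in the style of Perelman.

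\emph{Step 1: a first bad point.} Assume (1) fails. The shrinking balls lemma \cite[Corollary 3.3]{ST22JDG} (with the same $\beta$) says the following. Suppose $|\Rm|_{g(s)}\le c_0/s$ on $B_{g(s)}(x_0,r_0-\beta\sqrt{c_0s})$ for all $s\in(0,t]$. Then $B_{g(t)}(x_0,r_0-\beta\sqrt{c_0t})\subset B_{g(0)}(x_0,r_0)$. So the inclusion in (1) can only fail after the curvature bound has failed. Hence there is some $t<\frac{r_0^2}{c_0\beta^2}$ and a point of $B_{g(t)}(x_0,r_0-\beta\sqrt{c_0t})$ where $|\Rm|\ge c_0/t$. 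Call this pair $(x_1,t_1)$. It lies in the larger ball $B_{g(t_1)}(x_0,r_0-\frac12\beta\sqrt{c_0t_1})$, which leaves a margin of $\frac12\beta\sqrt{c_0t_1}$ for the iteration.

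\emph{Step 2: iteration.} Set $Q_1=|\Rm|(x_1,t_1)$. If (2) holds at $(x_1,t_1)$, we are done. Otherwise there is $(x_2,t_2)$ with $t_2\in[t_1-\frac18c_0Q_1^{-1},t_1]$ and $x_2\in B_{g(t_1)}(x_1,\frac{\beta c_0}{8}Q_1^{-1/2})$ such that $Q_2:=|\Rm|(x_2,t_2)>4Q_1$. Then $Q_2>4Q_1\ge c_0/t_1\ge c_0/t_2$, so the threshold condition $Q\ge c_0/\bar t$ is inherited. Since $t_2\le t_1$, it also inherits $t_2<\frac{r_0^2}{c_0\beta^2}$, and $t_2>0$ because $\frac18 c_0Q_1^{-1}\le\frac18t_1$. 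Repeating, either (2) holds at some step, or we get a sequence with $Q_k\ge4^{k-1}Q_1$. The latter is impossible, because $g(t)$ is smooth on the compact set $\overline{B_{g(0)}(x_0,r_0)}\times[t_1/2,T]$ and all $t_k\ge t_1-\sum_k\frac18c_0Q_k^{-1}\ge\frac{6}{7}t_1$ stay away from $0$.

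\emph{Step 3: keeping the points in the ball (main obstacle).} The delicate part is to show that every $x_k$ stays in $B_{g(t_k)}(x_0,r_0-\frac12\beta\sqrt{c_0t_k})$, and inside the region where the flow is controlled. The spatial displacements sum geometrically: $\sum_k\frac{\beta c_0}{8}Q_k^{-1/2}\le\frac{\beta c_0}{4}Q_1^{-1/2}\le\frac{\beta}{4}\sqrt{c_0t_1}$, which fits inside the margin. However, each displacement is measured in a different metric $g(t_k)$. To transfer $g(t_{k})$-distances to $g(t_{k+1})$-distances I would again use the shrinking balls lemma \cite[Corollary 3.3]{ST22JDG}. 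Over the time interval between steps one has the bound $|\Rm|\lesssim c_0/t$, available from Step 1's choice of first bad time. This controls how far the balls $B_{g(t)}(x_0,\rho-\beta\sqrt{c_0t})$ can shrink. The monotonicity $\rho-\beta\sqrt{c_0t}$ in $t$ then absorbs the backward time shifts, since $t_{k+1}\le t_k$ only enlarges the allowed radius. I would first try to choose $t_1$ as the infimum of bad times, so that the bound $|\Rm|<c_0/t$ holds on the shrinking balls for all earlier times, and verify the distance bookkeeping with the constants $\frac18$ and $\frac12$ as stated. If that bookkeeping fails to close, the fallback is to pick $(\bar x,\bar t)$ instead as an almost-maximizer of $|\Rm|$ weighted by the distance to the boundary of the shrinking ball.
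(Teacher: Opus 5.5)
Your argument breaks at Step 3, which is the actual content of the lemma; Steps 1 and 2 are routine. The paper only quotes this lemma from \cite{ST22JDG}, so the argument has to stand on its own.

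When (2) fails at $(x_k,t_k)$, you only learn $d_{g(t_k)}(x_k,x_{k+1})<\frac{\beta c_0}{8}Q_k^{-1/2}$. This is a distance at the \emph{later} time $t_k$. To continue, you must bound how much $d_{g(s)}(x_0,x_{k+1})$ can grow as $s$ decreases from $t_k$ to $t_{k+1}$. Both the shrinking balls lemma (run on $[t_{k+1},t_k]$ about $x_0$) and Perelman's Lemma 8.3(b) need an upper curvature bound for $s\in(t_{k+1},t_k]$. For the former, the bound is needed on $g(s)$-balls about $x_0$ that contain $x_{k+1}$; for the latter, on a neighbourhood of $x_{k+1}$.

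Choosing $t_1$ as the first bad time does not supply this. It gives $|\Rm|<c_0/s$ only on $B_{g(s)}(x_0,r_0-\beta\sqrt{c_0s})$ for $s<t_1$. Every later point with $t_k<t_1$ lies \emph{outside} that ball, because $|\Rm|(x_k,t_k)>4Q_{k-1}>c_0/t_k$.

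Around $x_{k+1}$ the curvature exceeds $4Q_k$ at time $t_{k+1}$ and is uncontrolled on $[t_{k+1},t_k]$; that is exactly what the failure of (2) at $(x_k,t_k)$ means. A crude comparison of $g(t_{k+1})$ with $g(t_k)$ would also need such a bound, and even then it loses a factor $e^{Cc_0}$. That is useless, since $c_0\to\infty$ in the application.

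So it is not shown that the $x_k$ stay in $B_{g(t_k)}(x_0,r_0-\frac12\beta\sqrt{c_0t_k})$. Consequently the termination argument in Step 2, which needs the points to remain in a fixed compact region, is not justified either. The weighted-maximiser fallback is only named, not carried out.

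The missing idea is to organise the selection so that distances at different times are only ever compared on $g(s)$-balls \emph{centred at $x_0$} where a curvature bound is already known. The standard device is Perelman's, from Claims 1--2 in the proof of pseudolocality \cite[\S 10]{Perelman02arXiv}:
\begin{itemize}
\item Run the iteration with neighbourhoods $\{(x,t): t\in[t_k-\frac18c_0Q_k^{-1},t_k],\ d_{g(t)}(x_0,x)<d_{g(t_k)}(x_0,x_k)+\rho_k\}$. Then $d_{g(t_k)}(x_0,x_k)$ simply telescopes, and no cross-time comparison is needed while picking.
\item Only at the end, convert the bound $|\Rm|\le 4Q$ on such a region into the required bound on $B_{g(\bar t)}(\bar x,\frac{\beta c_0}{8}Q^{-1/2})\times[\bar t-\frac18c_0Q^{-1},\bar t]$. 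Do this with the shrinking balls lemma about $x_0$ on $[t,\bar t]$.
\item The hypothesis of that lemma now holds: on the $g(s)$-balls about $x_0$ that make up the region, $|\Rm|\le4Q\le\frac{c_0}{2(s-t)}$ for $0<s-t\le\frac18c_0Q^{-1}$.
\item This conversion costs $\frac{\beta c_0}{4}Q^{-1/2}$, so $\rho_k$ must be at least $\frac{3\beta c_0}{8}Q_k^{-1/2}$. The margin bookkeeping (the constants $\frac12$, $\frac18$) then has to be checked afresh.
\end{itemize}
A spacetime maximisation weighted by distance to $x_0$, as in your fallback, would need the same end-stage conversion.

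Two minor slips in Step 2. The chain $4Q_1\ge c_0/t_1\ge c_0/t_2$ has its second inequality reversed; use $t_2\ge\frac78 t_1$, so that $c_0/t_2\le\frac87c_0/t_1<4Q_1$. The time sum gives $t_k\ge\frac56t_1$, not $\frac67t_1$.
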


\dummyline{1}

We also need the following result for ancient solution to the Ricci flow with $\Rm_{g(t)}\in\mathfrak{C}_{\eta,\mu}$. 

\begin{lemma} \label{lma ancient sol.}
    Suppose $(M,g(t)),t\in(-\infty,0]$ is a $4$-dimensional complete non-flat ancient solution
            to the Ricci flow with bounded curvature and
            $\Rm_{g(t)}\in\mathfrak{C}_{\eta,\mu}$
            for some $\mu-1\ge\eta\ge0$ with $\mu>1$.
    Then $(M,g(t))$ has nonnegative curvature operator and hence
            the volume growth is non-Euclidean.
\end{lemma}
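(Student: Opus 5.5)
The plan is to show that the cone condition \emph{improves} along the flow, and that on an ancient solution with bounded curvature the improvement accumulates over infinite backward time until the curvature operator is forced into the cone of nonnegative curvature operators.

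\emph{Step 1 (interior).} Since $(M,g(t))$ is non-flat, Lemma \ref{lma null vector} (i) together with Hamilton's strong maximum principle applied to $X^{\Rm},Y^{\Rm}$ gives $A_1+A_2>0$ and $C_1+C_2>0$ everywhere for $t<0$. In particular the scalar curvature satisfies $\mathcal{R}>0$, and the ratios
\[
f:=\frac{(B_2+B_3)^2}{(A_1+A_2)(C_1+C_2)},\qquad h:=\frac{A_2+A_3}{A_1+A_2}
\]
(and the analogue of $h$ for $C$) are well defined and bounded by $\eta$ and $\mu$ respectively.

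\emph{Step 2 (strict improvement, the core).} Using the null-vector computations in the proof of Lemma \ref{lma null vector}, with the inequalities there kept strict, I would show the following. For any $(\eta',\mu')$ with $\mu'-1\ge\eta'$ and $\mu'>1$, whenever $\Rm\in\partial\mathfrak{C}_{\eta',\mu'}$ is not a multiple of $\mathcal{I}$, the ODE $\frac{d}{dt}\Rm=Q(\Rm)$ points strictly into the interior. The quantitative form I would aim for is
\[
Q\text{-derivative of }\hat F\ \ge\ c\,\mathcal{R}\,\delta(\eta',\mu',\Rm),
\]
with $\delta>0$ away from $\mathfrak{C}_{0,1^+}$. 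This should yield a continuous one-parameter family of cones $\mathfrak{C}(s)$, $s\ge0$, with $\mathfrak{C}(0)=\mathfrak{C}_{\eta,\mu}$, each preserved and strictly pinched by the Hamilton ODE. I would make sure that $\bigcap_s\mathfrak{C}(s)$ is contained in the cone of nonnegative curvature operators: along the family $\eta(s)\to0$, i.e.\ $B\to0$ relative to $A_1+A_2$ and $C_1+C_2$, and $\mu(s)\to1$, i.e.\ $A,C$ become almost scalar. The natural first attempt is to set $\mu(s)=1+\epsilon(s)$ and $\eta(s)=\epsilon(s)\le\mu(s)-1$, and then check Hamilton's inequalities with explicit slack of order $\epsilon(\mu-1)$ times the spread of the eigenvalues.

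\emph{Step 3 (ancient argument).} Fix $\tau>0$. On $[-\tau,0]$ the curvature is bounded, so the maximum principle of Corollary \ref{cor preseve.} applies to each $\mathfrak{C}(s)$ (bounded curvature trivially satisfies its hypothesis). I would then run the pinching with a time-dependent parameter $s(t)$ solving $s'=c\,\mathcal{R}_{\min}$-type growth, or more robustly argue by contradiction. Suppose $\Rm$ has a negative direction at some point; since $\mathfrak{C}_{\eta,\mu}\cap\{\text{fixed distance to }\ge0\}$ is compact after normalizing $\mathcal{R}=1$, there is a pinching defect $\delta_0>0$ there. Then one of two things happens. Either the pinching improves by a definite amount per unit of $\int\mathcal{R}\,dt$; or, if $\int_{-\infty}^0\sup\mathcal{R}\,dt<\infty$, a backward blow-down/limit argument produces an ancient solution with bounded curvature on which the minimal pinching is attained in the interior. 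In the latter case the strong maximum principle gives a contradiction unless $\Rm$ is a nonnegative multiple of $\mathcal{I}$ in the degenerate directions. This dichotomy is the step I expect to be the main obstacle. Concretely, I would handle it by taking the infimum over the solution of the largest $s$ with $\Rm\in\mathfrak{C}(s)$, translating in time along a sequence realizing it, and extracting a limit using bounded curvature together with Hamilton's compactness, after first rescaling to normalize injectivity radius, or working locally on $\wedge^2$-bundles if collapsing occurs. At a spacetime point of the limit the pinching is extremal, which contradicts Step 2 unless the limit lies in $\bigcap_s\mathfrak{C}(s)\subset\{\Rm\ge0\}$.

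\emph{Step 4 (volume growth).} Once $\Rm\ge0$, the solution is a complete non-flat ancient solution with bounded nonnegative curvature operator. The known result (Perelman's asymptotic volume ratio argument for $\kappa$-solutions, or Ni's theorem for ancient solutions with $\Rm\ge0$) gives $\AVR(g(t))=0$, i.e.\ non-Euclidean volume growth.
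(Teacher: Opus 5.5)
Your Steps 1 and 4 agree with the paper: the strong maximum principle gives positivity of $A_1+A_2$, and Perelman/Ni give the volume growth. Steps 2--3, however, aim at a conclusion that is false in part of the stated range. With $\eta(s)\to0$ and $\mu(s)\to1$ you get $\bigcap_s\mathfrak{C}(s)=\mathfrak{C}_{0,1^+}$, so you would be proving that every such ancient solution has constant curvature.

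For $\eta\ge1$ the cylinder $S^3\times\mathbb{R}$ refutes this. Its curvature operator is the projection $P$ onto $\wedge^2W$, where $W$ is spanned by the $S^3$ directions. In the basis of Section \ref{sect decomp. 4mfd.} one finds $A=B=C=\frac12\,\mathrm{id}$, so $(B_2+B_3)^2=(A_1+A_2)(C_1+C_2)$ and $A_2+A_3=A_1+A_2$. Thus $P\in\mathfrak{C}_{\eta,\mu}$ exactly when $\eta\ge1$. It is a complete, non-flat, bounded-curvature ancient solution lying in no $\mathfrak{C}_{\eta',\mu'}$ with $\eta'<1$. The $4$-dimensional Bryant soliton is a non-product example: it has $A=C=\frac{K_1+K_2}{2}\,\mathrm{id}$ and $B=\frac{K_2-K_1}{2}\,\mathrm{id}$, so the ratio $(B_2+B_3)^2/((A_1+A_2)(C_1+C_2))$ tends to $1$ at infinity.

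The same operator shows exactly where Step 2 breaks. Since $P^2=P^\#=P$, we get $Q(P)=2P$, so the ODE moves $P$ along its own ray. Hence $\hat{F}^1$ stays identically $0$ on $\partial\mathfrak{C}_{1,\mu'}$, although $P$ is not a multiple of $\mathcal{I}$. There is no strict inward pointing, and no preserved family can be pushed past $\eta'=1$. This case is not marginal: the blow-up limit in the proof of Lemma \ref{lma pseudo.} only has $\eta_\infty\in[0,\min\{1,\mu-1\}]$, so $\eta=1$ must be covered.

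What is missing is the right target and a reduction to a known theorem. The target should be $\Rm\ge0$, which the cylinder does satisfy. The paper reaches it without constructing any pinching family, because the cone is algebraically a uniformly PIC condition. Assume $A_1+A_2\le C_1+C_2$. Then
\[
C_1+C_2\le\tfrac23\tr C=\tfrac23\tr A\le A_2+A_3\le\mu(A_1+A_2),
\]
hence $\max\{A_3,B_3,C_3\}\le\Lambda\min\{A_1+A_2,C_1+C_2\}$ with $\Lambda=\max\{\mu,\sqrt{\eta\mu},\mu^2\}$. Your Step 1 supplies $\max\{A_3,B_3,C_3\}>0$. Cho--Li \cite[Lemma 4.4]{CL23MathAnn} then finishes the proof: it is a Brendle--Huisken--Sinestrari-type result stating that complete, bounded-curvature, uniformly PIC ancient solutions have nonnegative curvature operator.

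Even for $\eta<1$, your Step 3 would amount to reproving such a classification in the noncompact setting. It also rests on a compactness argument the hypotheses do not support, since the lemma assumes no non-collapsing.
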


\dummyline{0.5}

\begin{proof}
    By \cite[Lemma 4.4]{CL23MathAnn}, \cite[Proposition 11.4]{Perelman02arXiv}
            (see also \cite[Theorem 1]{Ni05MRL} and \cite[Lemma 6.3.1]{CaoZhu}), it suffices to show that
            $(M,g(t))$ has uniformly PIC.
    By \cite[Definition 2.2 \textup{(\romannumeral3)}]{CL23MathAnn}, we only
            need to verify that there exists $\Lambda\ge1$ such that
    \begin{equation} \label{eqn unif.pic}
        0<\max\{A_3,B_3,C_3\}\le\Lambda\min\{A_1+A_2,C_1+C_2\}.
    \end{equation}

Note that \eqref{eqn unif.pic} is a well-defined pointwise curvature condition since it is invariant under change of orientation.  We first check the second inequality at any fixed point with a local orientation. Without loss of generality, we may assume $A_1+A_2\le C_1+C_2$.
    Recall that $\mathfrak{C}_{\eta,\mu}\subset\mathfrak{C}_\mathrm{WPIC}$
            and we have $A_1+A_2\ge0$, $C_1+C_2\ge0$ and
    \[
        C_1+C_2\le\frac{2}{3}\tr(C)=\frac{2}{3}\tr(A)\le(A_2+A_3)\le\mu(A_1+A_2).
    \]
    It follows that
    \[\left\{\;\begin{aligned}
        &A_3\le A_2+A_3\le\mu(A_1+A_2); \\
        &B_3\le B_2+B_3\le\sqrt{\eta(A_1+A_2)(C_1+C_2)}
                \le\sqrt{\eta\mu}(A_1+A_2); \\
        &C_3\le C_2+C_3\le\mu(C_1+C_2)\le\mu^2(A_1+A_2).
    \end{aligned}\right.\]
    We can take $\Lambda:=\max\{\mu,\sqrt{\eta\mu},\mu^2\}$. The above argument is basically contained in \cite[Corollary 1.5]{Hamilton97CAG}.

    It remains to check the first inequality in \eqref{eqn unif.pic}.
    For any $t_0<0$, there exists $x_0\in M$ such that $\Rm(x_0,t_0)\neq0$.
    By Lemma \ref{lma null vector} \textup{(\romannumeral1)}, we have at
            $(x_0,t_0)$, $A_1+A_2>0$, after choosing a local orientation.
    On the other hand, we already know from the proof of Lemma
            \ref{lma null vector} \textup{(\romannumeral3)} that
    \[
        \Boxg{g(t)}(A_1+A_2)\ge(A_1+A_2)(2A_3+A_1)\ge0,
    \]
    in the barrier sense.
    Note that $A_1+A_2$ may not be a well-defined function on the whole manifold
            when $M$ is non-oriented.
    However, since $M$ is assumed to be connected, every two points in $M$
            can be connected by a continuous curve and then we can find a neighborhood of 
            this curve such that  all points in this  neighborhood are equipped with the same local orientation when one considers  the decomposition of $\wedge^2(M)$ via Hodge star operator. 
Then we can  apply the strong maximum principle (e.g. \cite[Proposition 12.47]
            {CGIKLN08Book}) to conclude that $A_1+A_2>0$ and then $A_3>0$ everywhere
           with the compatible local orientation  for all $t>t_0$. Therefore, we have $\max\{A_3,B_3,C_3\}>0$ on $M\times(t_0,0]$.
    Since $t_0$ is arbitrary, we conclude that $\max\{A_3,B_3,C_3\}>0$
            on $M\times(-\infty,0]$.
    This completes the proof.
\end{proof}

\dummyline{1}

\begin{proof}[Proof of Lemma \ref{lma pseudo.}]
    The proof follows that of \cite[Lemma 2.1]{ST22JDG} and
            we use Lemma \ref{lma ancient sol.} to draw a contradiction. We provide details here for completeness.

    We may assume $r=1$ by parabolic rescaling.
    Since $\eta\le1$, by Lemma \ref{lma 2-non flag}, we have
    \[
        \Ric_{g(t)}\ge-3K\text{ on }B_{g(t)}(p,1),t\in[0,T].
    \]

    Suppose the curvature estimate in the lemma fails.
    Then for any $c_k\to\infty$ and for any $t_k\to0$ sufficiently small
            with $c_kt_k\to0$, we can find a sequence of $\eta_k\in[0,\min\{1,\mu-1\}]$,
            a sequence of Ricci flows $(M_k,\tilde{g}_k(t))$ for $t\in[0,t_k]$
            and a sequence of points $x_k\in M_k$ with
            $B_{\tilde{g}_k(t)}(x_k,1)\subset\subset M_k$
            for all $t\in[0,t_k]$, such that
    \[\left\{\;\begin{aligned}
        &\VolB_{\tilde{g}_k(0)}(x_k,1)\ge v_0; \\
        &\Rm_{\tilde{g}_k(t)}+K\mathcal{I}_{\tilde{g}_k(t)}
                \in\mathfrak{C}_{\eta_k,\mu}\text{ on }B_{\tilde{g}_k(t)}(x_k,1)
                \text{ for }t\in[0,t_k],
    \end{aligned}\right.\]
    and
    \[
        |\Rm|_{\tilde{g}_k(t)}<\frac{c_k}{t}\text{ on }
                B_{\tilde{g}_k(t)}(x_k,\frac{1}{8})\text{ for }t\in(0,t_k),
    \]
    but
    \begin{equation} \label{eqn contra.}
        |\Rm|_{\tilde{g}_k(t_k)}=\frac{c_k}{t_k}\text{ at some point in }
                \overline{B_{\tilde{g}_k(t_k)}(x_k,\frac{1}{8})}.
    \end{equation}

    By volume comparison and applying Lemma \ref{fact lower vol. ctrl.} to
            $B_{\tilde{g}_k}(x_k,\frac{1}{8})$, there exist
            $\varepsilon_0(v_0,K)>0$ and $\hat{T}(v_0,K,c_k)>0$ such that
    \begin{equation} \label{eqn vol. bound}
        \VolB_{\tilde{g}_k(t)}(x_k,1)\ge\varepsilon_0
                \text{ for all }t\in[0,t_k]\cap[0,\hat{T}).
    \end{equation}
    We may choose $t_k\le\hat{T}$ to ensure \eqref{eqn vol. bound}
            holds for all $t\in[0,t_k]$.

    Now we apply Lemma \ref{fact decay or not} to $\tilde{g}_k(t)$ with
            $r_0=\frac{1}{4}$ and $c_0=c_k$.
    Since we may further assume $t_k<\frac{r_0^2}{4c_k\beta^2}$,
            we know from \eqref{eqn contra.} that Assertion 1 in
            Lemma \ref{fact decay or not} can not hold.
    Then there exist $\bar{t}_k\in[0,t_k]$ and $\bar{x}_k\in B_{\tilde{g}_k(
            \bar{t}_k)}(x_k,r_0-\frac{1}{2}\beta\sqrt{c_k\bar{t}_k})$ such that
    \[
        Q_k:=|\Rm|_{\tilde{g}_k(\bar{t}_k)}(\bar{x}_k)\ge\frac{c_k}{\bar{t}_k}
                \to\infty\text{ and }|\Rm|_{\tilde{g}_k(t)}(x)\le4Q_k,
    \]
    whenever $x\in B_{\tilde{g}_k(\bar{t}_k)}(\bar{x}_k,\frac{\beta c_k}{8}
            Q_k^{-\frac{1}{2}})$ and $t\in[\bar{t}_k-\frac{1}{8}c_kQ_k^{-1},
            \bar{t}_k]$.

    By volume comparison, for all $r\in(0,\frac{1}{2})$,
    \begin{align*}
        \frac{\VolB_{\tilde{g}_k(\bar{t}_k)}(\bar{x}_k,r)}{\omega_4 r^4}
                &\ge\frac{\VolB_{\tilde{g}_k(\bar{t}_k)}(\bar{x}_k,r)}{
                \VolB_{-K}(r)}
                \ge\frac{\VolB_{\tilde{g}_k(\bar{t}_k)}(\bar{x}_k,\frac{1}{2})}
                {\VolB_{-K}(\frac{1}{2})} \\
        &\ge\frac{\VolB_{\tilde{g}_k(\bar{t}_k)}(x_k,\frac{1}{4})}{
                \VolB_{-K}(\frac{1}{4})}
                \cdot\frac{\VolB_{-K}(\frac{1}{4})}{\VolB_{-K}(\frac{1}{2})} \\
        &\ge\frac{\VolB_{\tilde{g}_k(\bar{t}_k)}(x_k,1)}{
                \VolB_{-K}(1)}
                \cdot\frac{\VolB_{-K}(\frac{1}{4})}{\VolB_{-K}(\frac{1}{2})}.
    \end{align*}
Hence by \eqref{eqn vol. bound}, there exists $\eta_1(v_0,K)>0$ such that
    \[
        \frac{\VolB_{\tilde{g}_k(\bar{t}_k)}(\bar{x}_k,r)}{r^4}\ge\eta_1.
    \]

    Let $g_k(t):=Q_k\tilde{g}_k(\bar{t}_k+Q_k^{-1}t)$ for $t\in[-\frac{1}{8}c_k,
            0]$.
    Then we have
    \[\left\{\;\begin{aligned}
        &|\Rm|_{g_k(0)}(\bar{x}_k)=1; \\
        &|\Rm|_{g_k(t)}(x)\le4\text{ on }B_{g_k(0)}(
                \bar{x}_k,\frac{\beta c_k}{8})\times[-\frac{1}{8}c_k,0]; \\
        &\Rm_{g_k(t)}+KQ_k^{-1}\mathcal{I}_{g_k(t)}\in\mathfrak{C}_{\eta_k,\mu},
    \end{aligned}\right.\]
    and for all $0<r<\frac{1}{2}\sqrt{Q_k}\to\infty$,
    \begin{equation} \label{eqn vol. scaled}
        \frac{\VolB_{g_k(0)}(\bar{x}_k,r)}{r^4}\ge\eta_1>0.
    \end{equation}

   After passing to a subsequence, we may assume
            $\eta_k\to\eta_\infty\in[0,\min\{1,\mu-1\}]$.
    By Cheeger-Gromov-Taylor \cite{CGT82JDG}, $\Inj_{g_k(0)}(\bar{x}_k)$
            is bounded from below uniformly.
    Hence we can apply Hamilton's compactness theorem to show that
            $(M_k,\allowbreak g_k(t),\bar{x}_k)$ admits a subsequence converging to a
            complete ancient solution $(M_\infty,g_\infty(t),x_\infty)$
            with bounded curvature and $|\Rm|_{g_\infty(0)}(x_\infty)=1$.
    By the uniqueness of Ricci flow \cite[Theorem 1.1]{CZ06JDG}, $g_\infty(t)$
            is non-flat for all $t\in(-\infty,0]$.
    Moreover, we have $\Rm_{g_\infty(t)}\in\mathfrak{C}_{\eta_\infty,\mu}$
            and \eqref{eqn vol. scaled} passes to the limit,
            which contradicts Lemma \ref{lma ancient sol.}.

    With the curvature estimate established, the injectivity radius
            estimate follows from Lemma \ref{fact lower vol. ctrl.} and
            Cheeger-Gromov-Taylor \cite{CGT82JDG}.
    This completes the proof.
\end{proof}

\dummyline{1}



\dummynewpage
\section{Existence of Ricci flow I and applications}

In this section, we will prove existence theorems for Ricci flow starting from a volume non-collapsed metric with its curvature operator in $\mathfrak{C}_{\eta,\mu}$ which may not be bounded. Then we will show topological and geometric gap theorems via Ricci flows we construct.


We first state the following local existence proposition for Ricci flow which is based on a result by Hochard \cite[Corollaire IV.1.2]{Hochard19PhD} without proof.

\begin{proposition}[{\cite[Proposition 4.2]{LT25JDG}}] \label{fact local RF}
    Suppose $(N^n,h_0)$ is a smooth manifold (not necessarily complete)
            satisfying
    \[
        |\Rm|_{h_0}\le\rho^{-2}\text{ for some }\rho>0.
    \]
    Then there exist constants $\alpha(n)\in(0,1]$, $\Lambda(n)>0$ and a smooth
            Ricci flow $h(t)$ on $N$ for $t\in[0,\alpha\rho^2]$ with the
            properties that
    \begin{enumerate}
        \item[\textup{(\romannumeral1)}] $h(0)=h_0$ on $N^\rho_{h_0}:=
                \{x\in N\mid B_{h_0}(x,\rho)\subset\subset N\}$;
        \item[\textup{(\romannumeral2)}] $|\Rm|_{h(t)}\le\Lambda\rho^{-2}$
                throughout $N\times[0,\alpha\rho^2]$.
    \end{enumerate}
\end{proposition}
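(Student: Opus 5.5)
The plan is to follow Hochard's construction: conformally blow up $h_0$ near the metric boundary of $N$ so that it becomes a complete metric with curvature bounded by $C(n)\rho^{-2}$. We then run Shi's flow from this complete metric and restrict to $N$. By parabolic rescaling we may take $\rho=1$ throughout.

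\emph{Step 1 (a smooth boundary-distance function).} Let
\[
    r(x):=\sup\{s\le 1 : B_{h_0}(x,s)\subset\subset N\}.
\]
This is $1$-Lipschitz, positive on $N$, and tends to $0$ at the ideal boundary. Because $|\Rm|_{h_0}\le 1$, at every point we can work in exponential charts of radius comparable to $r(x)$, pulled back to the tangent space, where $h_0$ is $C^{1,\alpha}$-controlled. Averaging $r$ in these charts together with a partition of unity subordinate to a cover by balls $B(x_i,r(x_i)/10)$ of bounded multiplicity should give a smooth $\psi$. The properties we aim for are
\[
    \tfrac12 r\le\psi\le 2r,\qquad |\nabla\psi|\le C(n),\qquad |\nabla^2\psi|\le \frac{C(n)}{r}.
\]
\textbf{This step is the main obstacle.} The difficulty is the Hessian bound: it has to come from second-derivative control of chart transitions at the scale $r$, which is where the bound $|\Rm|_{h_0}\le 1$ enters.

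\emph{Step 2 (conformal change).} Choose a fixed profile $\phi:(0,\infty)\to[0,\infty)$ with $\phi\equiv0$ on $[1/2,\infty)$ and $\phi(s)=-\log s$ for small $s$, and set
\[
    h:=e^{2F}h_0,\qquad F:=\phi(\psi).
\]
Then $h=h_0$ wherever $\psi\ge 1/2$; by Step 1 this region contains $N^1_{h_0}$ after adjusting constants in the cutoff. The conformal change formula reads
\[
    \Rm_h = e^{-2F}\bigl(\Rm_{h_0} - \nabla^2F\owedge h_0 + dF\otimes dF\owedge h_0 - |\nabla F|^2 h_0\owedge h_0/2\bigr)
\]
up to the standard normalizing constants. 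From Step 1 we have $|\nabla F|\lesssim 1/\psi$ and $|\nabla^2F|\lesssim 1/\psi^2$. Multiplying by $e^{-2F}\sim\psi^2$ then gives $|\Rm|_h\le C(n)$ on all of $N$. For completeness, a curve leaving every compact set must have $\psi\to0$, and its $h$-length is at least $c\int_0 ds/s=\infty$; hence $(N,h)$ is complete.

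\emph{Step 3 (flow).} By Shi's existence theorem for complete metrics with bounded curvature, there is a Ricci flow $h(t)$ starting from the metric $h$ of Step 2. It exists on $[0,\alpha(n)]$, and standard doubling-time estimates give $|\Rm|_{h(t)}\le 2C(n)=:\Lambda$ there. Property (i) holds because the initial metric $h$ of this flow equals $h_0$ on $N^1_{h_0}$. Property (ii) follows directly from the doubling-time bound. Undoing the rescaling yields the time interval $[0,\alpha\rho^2]$ and the bound $\Lambda\rho^{-2}$.
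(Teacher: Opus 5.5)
The paper does not prove this proposition. It quotes it from Lee--Topping, who derive it from Hochard, and the argument behind it is the one you outline; in the proof of Theorem~\ref{thm local exist.1} the paper paraphrases it as ``conformal completion, Shi's existence theorem and the doubling time estimate''. Your Steps 2 and 3 are fine, apart from one correction to the completeness argument. It is false that every curve leaving all compact sets has $\psi\to0$: if $N$ has a complete end, $r\equiv1$ far out along it. Argue instead in two cases.
\begin{itemize}
\item A divergent curve of infinite $h_0$-length has infinite $h$-length, because $h\ge h_0$.
\item Along a divergent curve of finite $h_0$-length, $r\to0$. Otherwise its tail would stay in some $B_{h_0}(x,\delta/2)$ with $B_{h_0}(x,\delta)\subset\subset N$. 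The bound $|\nabla\psi|\le C$ then gives $h$-length at least $C^{-1}\int|d\log\psi|=\infty$.
\end{itemize}

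The genuine gap is Step 1, which you correctly flag, and the mechanism you sketch would not work as stated. Nothing bounds the injectivity radius of $h_0$ from below in terms of $r$, so in collapsed regions $\exp_{x_i}$ is far from injective on $B(0,r(x_i)/10)$. This breaks both ingredients of your sketch.
\begin{itemize}
\item An average of $r$ taken with a kernel in the chart $T_{x_i}N$ generally takes different values at different preimages of the same point. It is therefore not a function on $B_{h_0}(x_i,r(x_i)/10)$.
\item The bump functions in your partition of unity need $|\nabla^2\chi_i|\le Cr(x_i)^{-2}$. They cannot be taken of the form $\varphi(d_{h_0}(x_i,\cdot)/r(x_i))$, because the cut locus of $x_i$ may lie at distance $\ll r(x_i)$.
\end{itemize}
So the problem is not second derivatives of chart transitions; on the local cover, harmonic coordinates already give $C^{1,\alpha}$ control. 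The problem is that chart-wise constructions do not descend to $N$.

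The missing idea is to smooth with an operator defined intrinsically at each point, for instance
\[
    \psi_\varepsilon(y):=\frac{\int_{T_yN}r(\exp_yu)\,\kappa(|u|/\varepsilon)\,J_y(u)\,du}{\int_{T_yN}\kappa(|u|/\varepsilon)\,J_y(u)\,du}.
\]
Here $J_y$ is the Jacobian of $\exp_y$, and $\kappa$ is a fixed nonincreasing profile equal to $1$ on $[0,1]$ and $0$ on $[2,\infty)$. This is a genuine function on $N$ whenever $2\varepsilon<r(y)$.

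To estimate it, lift to the local cover $(B(0,r(x))\subset T_xN,\exp_x^*h_0)$. There the curvature is bounded by $1$ and the injectivity radius near the origin is at least $c(n)r(x)$. On the cover, $\psi_\varepsilon$ becomes the normalized Riemannian convolution $\int\tilde r(z)\,\kappa(\tilde d(v,z)/\varepsilon)\,d\tilde V(z)$, so every $v$-derivative falls on the kernel. Hessian comparison for $\tilde d$ on the annulus where $\kappa'\neq0$, together with Bishop--Gromov, gives
\[
|\psi_\varepsilon-r|\le2\varepsilon,\qquad |\nabla\psi_\varepsilon|\le C,\qquad |\nabla^2\psi_\varepsilon|\le C/\varepsilon .
\]
Take $\varepsilon_k\sim2^{-k}$ on $\{r\sim2^{-k}\}$. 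Glue the $\psi_{\varepsilon_k}$ with a partition of unity built from cutoffs $\zeta(2^k\psi_{\varepsilon_k})$, which satisfy the same scaled bounds. This produces the $\psi$ required in Step 1, and with it your Steps 2 and 3 complete the proof.
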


\dummyline{1}

\begin{theorem} \label{thm local exist.0}
    For any $v_0>0$, $\eta\in(0,1]$ and $\mu\ge\eta+1$, there exist
            $T(\eta,\mu,v_0)>0$ and $D_1(\mu,v_0)>0$ such that
            the following holds.
    Let $(M,g_0)$ be a $4$-dimensional Riemannian manifold.
    Suppose $B_{g_0}(p,s_0)\subset\subset M$ for some $p\in M$ and $s_0\ge4$
            such that
    \[\left\{\;\begin{aligned}
        &\Rm_{g_0}\in\mathfrak{C}_{\eta,\mu}\text{ on }B_{g_0}(p,s_0); \\
        &\VolB_{g_0}(x,1)\ge v_0\text{ for all }x\in B_{g_0}(p,s_0-1).
    \end{aligned}\right.\]
    Then there exists a Ricci flow $g(t)$ on $B_{g_0}(p,s_0-2)$
            for $t\in[0,T]$ with $g(0)=g_0$ such that
    \[
        \sup\limits_{B_{g_0}(p,s_0-2)}|\Rm|_{g(t)}\le\frac{D_1}{t}\text{ for all }t\in(0,T].
    \]
\end{theorem}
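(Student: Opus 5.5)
We follow the now-standard Hochard/Lee--Tam/Huang--Lee iteration scheme (as in \cite{HL21JGA,LT25JDG}). The plan is to combine three earlier results: the local existence Proposition \ref{fact local RF}, the local cone preservation Theorem \ref{thm t power k}, and the local curvature estimate Lemma \ref{lma pseudo.}. Together they let us build a flow by repeated conformal modification and restarting, shrinking the domain slightly at each step.

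\textbf{Step 1 (initial flow).} Since $\overline{B_{g_0}(p,s_0)}$ is compact, the curvature of $g_0$ is bounded there by some $\rho_0^{-2}$, with $\rho_0$ depending on $g_0$. Following Hochard, we conformally modify $g_0$ near the boundary to get a (possibly incomplete) metric $h_0$ with $|\Rm|_{h_0}\le\rho^{-2}$ and $h_0=g_0$ on a slightly smaller ball. Proposition \ref{fact local RF} then gives a flow $h(t)$ on $[0,\alpha\rho^2]$ with $|\Rm|\le\Lambda\rho^{-2}$. This flow exists only for a short time depending on $g_0$. The task is to extend it to a uniform time $T(\eta,\mu,v_0)$ while keeping the estimate $D_1/t$, where $D_1$ depends only on $\mu,v_0$.

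\textbf{Step 2 (a priori estimate on a flow with bounded curvature).} Suppose a flow $g(t)$ on a ball $B_{g_0}(x,\cdot)$ satisfies $|\Rm|\le a/t$ for some large $a$. Theorem \ref{thm t power k} (with $\eta>0$, $k$ large, $\sigma\sim1$) gives $\Rm_{g(t)}+t^k\mathcal{I}\in\mathfrak{C}_{\eta,\mu}$ on slightly smaller balls, for $t\le c(\eta,a)$. Since $t^k\le1$, this yields $\Rm+K\mathcal{I}\in\mathfrak{C}_{\eta,\mu}$ with $K=1$. Two further facts hold:
\begin{itemize}
\item By Lemma \ref{lma 2-non flag}, $\Ric\ge-3K$, so the shrinking balls lemma and Lemma \ref{fact lower vol. ctrl.} keep unit balls noncollapsed.
\item Lemma \ref{lma pseudo.} with $r=1$ then improves the bound to $|\Rm|\le C_0(\mu,v_0)/t$ for $t\le\bar T(\mu,v_0)$.
\end{itemize}
So the constant improves from $a$ to $D_1:=C_0$, which does not depend on $a$. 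This bootstrap is what makes $D_1$ independent of $\eta$. The time $T$, however, inherits $\eta$-dependence through $c_2(\eta)$ in Theorem \ref{thm t power k}.

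\textbf{Step 3 (iteration).} Set $a:=D_1$ with some margin, say $a=2D_1$. At stage $j$ we have a flow on a ball of radius $s_0-2+2^{-j}$ up to time $t_j$, satisfying $|\Rm|\le a/t$. At time $t_j$ the curvature is at most $a/t_j$. We cut off and apply Proposition \ref{fact local RF} with $\rho^2\sim t_j/a$, obtaining an extension to time $t_j(1+\alpha/(\Lambda a))$ with curvature $\lesssim a/t$ on the extended interval, up to a bounded factor. Re-running Step 2 on the concatenated flow restores $|\Rm|\le D_1/t$. Because $t_{j+1}=(1+c)t_j$ grows geometrically, finitely many steps, with radius losses summing to less than $1$, reach the uniform time $T$. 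This proves the claimed bound on $B_{g_0}(p,s_0-2)$.

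\textbf{Main obstacle.} The delicate point is the interaction between cutting off and preserving the cone. The modified metric does not lie in $\mathfrak{C}_{\eta,\mu}$ near the modified region. So Theorem \ref{thm t power k} must be applied only on interior balls, whose radii shrink with the $\sigma$-losses and with $10\sqrt{at}$. We also must check that the a priori bound $a/t$ survives the concatenation at $t_j$, with the constant not deteriorating. We would handle this as in \cite[Theorem 1.1]{HL21JGA}: use concatenated flows with bound $|\Rm|\le 2\Lambda a/t$, apply the cone-preservation theorem with $a'=2\Lambda a$, and then use Lemma \ref{lma pseudo.} to return to $D_1/t$. Choosing $T$ small, depending on $\eta,\mu,v_0$, keeps all of these time restrictions compatible.
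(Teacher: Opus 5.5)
\textbf{There is a genuine gap in Steps 2--3: the spatial cost of each improvement step.}

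You apply Theorem \ref{thm t power k} with $\sigma\sim1$ and Lemma \ref{lma pseudo.} with $r=1$. So to restore $|\Rm|\le D_1/t$ at a point $x$, the \emph{newest} piece of the concatenated flow must exist on a ball of radius of order $1+4\sigma$ around $x$ for the whole time interval. Every re-run of Step 2 therefore shrinks the domain by an amount of order $1$.

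The number of extensions, however, is unbounded. The initial time $t_0$ depends on $g_0$ through $\rho$, and going from $t_0$ to a uniform $T$ with $t_{j+1}=(1+c)t_j$ takes about $\log(T/t_0)/\log(1+c)$ steps. With $s_0\ge4$ fixed, the ball is used up after a few steps.

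Shrinking $\sigma$ does not help. The time restriction $c_1\sigma^2a^{-1}$ in Theorem \ref{thm t power k} must exceed $t_{j+1}$, which forces $\sigma\gtrsim\sqrt{a\,t_{j+1}}$. Your radii $s_0-2+2^{-j}$ are also not the right bookkeeping. Proposition \ref{fact local RF} together with the shrinking balls lemma already costs an amount of order $\sqrt{t_j}$ at step $j$, which increases with $j$; only the geometric sum of these losses is small.

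\textbf{The missing idea is to do the improvement at the parabolic scale $\sqrt{t_j}$,} using that both $\mathfrak{C}_{\eta,\mu}$ and the bound $|\Rm|\le a/t$ are scale invariant.

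\begin{enumerate}
\item Rescale by $\delta_1^2t_j$. Apply Theorem \ref{thm t power k} on a ball of rescaled radius $1+\gamma/(4\delta_1)$ with $\sigma=\gamma/(16\delta_1)$. Choose $\gamma(\eta,\mu,v_0)$ so large that $c_1\sigma^2a^{-1}$ and $c_2\sigma^{15/4}a^{-2}$ exceed the rescaled final time $\delta_1^{-2}$.
\item Scaling back gives $\Rm_{g(t)}+\frac{t}{\delta_1^4t_j^2}\mathcal{I}_{g(t)}\in\mathfrak{C}_{\eta,\mu}$ on $B_{g(t)}(x,\delta_1\sqrt{t_j})$. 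This is exactly the hypothesis of Lemma \ref{lma pseudo.} with $r=\delta_1\sqrt{t_j}$ and $K=1$.
\item The small-scale noncollapsing $\VolB_{g_0}(x,r)\ge v_0r^4$ needed there comes from $\Ric_{g_0}\ge0$ (Lemma \ref{lma 2-non flag}) and Bishop--Gromov. It is valid while $\delta_1\sqrt{t_j}\le1$. Preservation of unit-ball volume along the flow is not what is needed.
\item With $\delta_1\ge\bar T^{-1/2}$, Lemma \ref{lma pseudo.} returns $|\Rm|\le C_0/t$ all the way up to $t_j$.
\end{enumerate}

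The radius loss per step is then $2L\sqrt{t_j}$ with $L$ uniform. The losses form a geometric series bounded by $2L\sqrt{t_i}/(1-(1+\nu)^{-1})$. The iteration stops in one of two ways: either the unit radius budget runs out, which forces $t_i\ge\nu^2/(4L^2(1+\nu)^2)$, or $t_k>\delta_1^{-2}$. In both cases $T$ is uniform.

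Your other ingredients are correct and match the paper. These are the choice $a\approx\Lambda C_0(1+\nu)^2$, and the fact that $D_1=C_0(\mu,v_0)$ is independent of $a$ and $\eta$, which is what closes the bootstrap.
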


\dummyline{1}

\begin{proof} 
With preparations in Section \ref{pre-cone} and Section \ref{cur-est}, the idea of the proof is now clear, see for example \cite{Hochard19PhD,ST22JDG,ST21GT,HL21JGA}, and we provide details here for completeness.

    Choose $\rho>0$ small enough such that
    \[
        |\Rm|_{g_0}\le\rho^{-2}\text{ on }B_{g_0}(p,s_0).
    \]
    By Proposition \ref{fact local RF}, there exists a local Ricci flow
            $g(t)$ on $B_{g_0}(p,r_0)$, where $r_0:=s_0-1$, with $g(0)=g_0$
            for a small time interval with curvature bounded by some number
            depending on $\rho$.

    Let $\gamma(\eta,\mu,v_0),L(\eta,\mu,v_0)>0$ and $a(\mu,v_0)\ge3$ be constants
            to be determined later.
    Choose $t_0\le1$ small enough, which might depend on $g_0$, such that
    \[
        |\Rm|_{g(t)}\le\frac{a}{t}\text{ on }B_{g_0}(p,r_0)\times(0,t_0].
    \]

\begin{claim} \label{claim5.1}
    By shrinking $t_0$ if necessary, we have for any
            $x\in B_{g_0}(p,r_0-L\sqrt{t_0})$ and $t\in(0,t_0]$,
    \[
        |\Rm|(x,t)\le\frac{C_0}{t}\text{ and }\Inj_{g(t)}(x)\ge\sqrt{C_0^{-1}t},
    \]
    where $C_0(\mu,v_0)>0$ is the constant in Lemma \ref{lma pseudo.} when $K=1$.
\end{claim}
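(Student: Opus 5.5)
The plan is to run the curvature-cone preservation of Theorem \ref{thm t power k} and the pseudolocality-type estimate of Lemma \ref{lma pseudo.} together, inside a continuity (bootstrap) argument in time. On $(0,t_0]$ we already have $|\Rm|_{g(t)}\le a/t$ on $B_{g_0}(p,r_0)$, where $a=a(\mu,v_0)$ will be fixed at the end. Apply Theorem \ref{thm t power k} with this $a$, some fixed $k$ (say $k=1$), and $\sigma$ of order $1$, on balls $B_{g(t)}(x,1)$ centered at points $x$ well inside $B_{g_0}(p,r_0)$. This gives
\[
\Rm_{g(t)}+t\,\mathcal{I}_{g(t)}\in\mathfrak{C}_{\eta,\mu}
\]
on such balls for all $t\le t_0\wedge \tilde T(\eta,a)$. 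Since $t\le 1$, this yields $\Rm_{g(t)}+\mathcal{I}_{g(t)}\in\mathfrak{C}_{\eta,\mu}$, which is the hypothesis of Lemma \ref{lma pseudo.} with $K=1$ and $r=1$. To apply Theorem \ref{thm t power k} we need the balls $B_{g(t)}(x,1+4\sigma)$ to stay inside the region where the flow exists and the bound $a/t$ holds. For this we use the shrinking balls lemma \cite[Corollary 3.3]{ST22JDG}: with $|\Rm|\le a/t$ we get $B_{g(t)}(x,\rho)\subset B_{g_0}(x,\rho+\beta\sqrt{at})$. This is exactly why the claim restricts to $x\in B_{g_0}(p,r_0-L\sqrt{t_0})$, with $L$ large compared to $\beta\sqrt a$ plus a fixed radius.

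Next we need the volume hypothesis $\VolB_{g(0)}(x,1)\ge v_0$ of Lemma \ref{lma pseudo.}. It is given for $x\in B_{g_0}(p,s_0-1)$ and transfers to the centers we use. Lemma \ref{lma pseudo.} then gives $|\Rm|_{g(t)}\le C_0/t$ and $\Inj_{g(t)}\ge\sqrt{C_0^{-1}t}$ on $B_{g(t)}(x,1/8)$ for $t\le \bar T(\mu,v_0,1)$. In particular both bounds hold at $x$ itself, which is what the claim asserts. Shrinking $t_0$ below $\bar T$, below the time $\tilde T$ from Theorem \ref{thm t power k}, and below the shrinking-balls time scale then gives the claim.

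The main obstacle is the apparent circularity. The cone estimate needs the a priori bound $a/t$, whose constant $a$ must in the end be independent of $g_0$, while the bound $C_0/t$ produced by Lemma \ref{lma pseudo.} is what should justify $a$. I would resolve this by choosing $a:=\max\{3,2C_0\}$ (or any $a>C_0$) and arguing by continuity in time. Let $t^*$ be the supremum of times up to which $|\Rm|\le a/t$ holds on the relevant ball. Such a $t^*>0$ exists because the flow from Proposition \ref{fact local RF} has bounded curvature, so $|\Rm|\le a/t$ for small $t$. On $(0,t^*]$ the argument above upgrades the bound to $C_0/t<a/t$ at interior points. Hence $t^*$ cannot be the first failure time unless it reaches the limit $t_0$, which is now controlled only by $\eta,\mu,v_0$ once we also shrink the radius by $L\sqrt{t}$.

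One must be careful that $L$, $a$, and the constants $c_1,c_2$ from Theorem \ref{thm t power k} do not depend on $g_0$. Only the initial smallness of $t_0$ needed to get the bound $a/t$ started depends on $g_0$. This is acceptable at the level of the claim, because the later iteration (à la Hochard, \cite{ST22JDG,HL21JGA}) extends the flow to a time depending only on $\eta,\mu,v_0$.
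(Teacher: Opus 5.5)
Your proof has a genuine gap: you run Theorem \ref{thm t power k} and Lemma \ref{lma pseudo.} at unit scale, and the claim cannot be reached that way. To apply them on $B_{g(t)}(x,1+4\sigma)$ and with $r=1$, the shrinking balls lemma requires $B_{g_0}(x,1+4\sigma+\beta\sqrt{at})\subset B_{g_0}(p,r_0)$. So $x$ must stay at $g_0$-distance roughly $1+4\sigma$ from $\partial B_{g_0}(p,r_0)$. The claim, however, asserts the estimates on $B_{g_0}(p,r_0-L\sqrt{t_0})$, where $L=L(\eta,\mu,v_0)$ is fixed and $t_0$ is small and depends on $g_0$. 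A margin $L\sqrt{t_0}$ cannot contain ``a fixed radius'' unless $L\ge(1+4\sigma)/\sqrt{t_0}$, which makes $L$ depend on $g_0$. For centers within distance of order $\sqrt{t_0}$ of the boundary, your unit balls leave the region where the flow, the bound $a/t$ and the cone condition are available, so neither result applies there. This is not a harmless change of constants. The iteration in Theorem \ref{thm local exist.0} uses $r_{k+1}=r_k-2L\sqrt{t_k}$, whose total loss is a geometric series comparable to $L\sqrt{t_i}$. A loss of order $1$ per step, over a number of steps that grows like $\log(T/t_0)$ and hence depends on $g_0$, would exhaust the ball.

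The missing idea is to work at the parabolic scale $\sqrt{t_0}$.
\begin{itemize}
\item The paper sets $\tilde g(t)=(\delta_1^2t_0)^{-1}g(\delta_1^2t_0t)$ and applies Theorem \ref{thm t power k} with $k=1$ on $B_{\tilde g(t)}(x,1+\frac{\gamma}{4\delta_1})$, i.e.\ on $g(t)$-balls of radius comparable to $\gamma\sqrt{t_0}$, with $\sigma=\frac{\gamma}{16\delta_1}$.
\item The constant $\gamma$ is chosen large in terms of $a$, so that the time restriction $c_1\sigma^2a^{-1}\wedge c_2\min\{\sigma^4,\sigma^{4-\frac{2}{\hat k+2}}\}\hat k^{-2}$ covers the whole rescaled interval $[0,\delta_1^{-2}]$.
\item Scaling back gives $\Rm_{g(t)}+\frac{t}{\delta_1^4t_0^2}\mathcal{I}_{g(t)}\in\mathfrak{C}_{\eta,\mu}$ on $B_{g(t)}(x,\delta_1\sqrt{t_0})$ for $t\le t_0$. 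This is the hypothesis of Lemma \ref{lma pseudo.} with $r=\delta_1\sqrt{t_0}$ and $K=1$.
\item The volume hypothesis at this small scale is not given directly. It comes from $\Ric_{g_0}\ge0$ (Lemma \ref{lma 2-non flag}) and Bishop--Gromov, giving $\VolB_{g_0}(x,r)\ge v_0r^4$ for $r\le1$. This is exactly where shrinking $t_0$, so that $\delta_1\sqrt{t_0}\le1$, is used.
\item The choice $\delta_1=\max\{1,\bar T^{-1/2}\}$ guarantees $\bar Tr^2\ge t_0$.
\end{itemize}
The required margin is then only $\gamma\sqrt{t_0}+\beta\sqrt{at_0}\le L\sqrt{t_0}$.

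Separately, your continuity argument in time is not needed for the claim. The bound $a/t$ on $B_{g_0}(p,r_0)\times(0,t_0]$ is already part of the choice of $t_0$. As written it would not close anyway, for two reasons. The improved bound $C_0/t$ is obtained only on a strictly smaller region than the one where $a/t$ was assumed. Also, the flow cannot be continued past its existence time. The paper gets $g_0$-independence instead through the Hochard extension (Proposition \ref{fact local RF}), together with the choice $a:=\Lambda C_0(1+\nu)^2$.
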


\begin{proof}[Proof of the Claim \ref{claim5.1}]
    Assume $L\ge\gamma+\beta\sqrt{a}$, where $\beta$ is the constant
            in the shrinking balls lemma \cite[Corollary 3.3]{ST22JDG}.
    Let $x\in B_{g_0}(p,r_0-L\sqrt{t_0})$ and $t\in[0,t_0]$.
    By the shrinking balls lemma \cite[Corollary 3.3]{ST22JDG},
    \[
        B_{g(t)}(x,\gamma\sqrt{t_0})\subset B_{g_0}(x,\gamma\sqrt{t_0}+\beta
                \sqrt{at})\subset B_{g_0}(x,L\sqrt{t_0})\subset B_{g_0}(p,r_0).
    \]

    Assume $\gamma>4$ and let $\delta_1(\mu,v_0)\in[1,\frac{\gamma}{4}]$
            be a constant to be determined later.
    Take
    \[
        \tilde{g}(t):=(\delta_1^2t_0)^{-1}g(\delta_1^2t_0t),
                t\in[0,\delta_1^{-2}].
    \]
    Then
    \[
        B_{\tilde{g}(t)}(x,1+\frac{\gamma}{4\delta_1})
                \subset\subset B_{\tilde{g}(t)}(x,\frac{\gamma}{\delta_1})
                =B_{g(\delta_1^2t_0t)}(x,\gamma\sqrt{t_0})\subset B_{g_0}(p,r_0).
    \]
    Apply Theorem \ref{thm t power k} on $B_{\tilde{g}(t)}(x,1+\frac{\gamma}
            {4\delta_1})$ with $\sigma=\frac{\gamma}{16\delta_1}\ge\frac{1}{4}$.
    Since $\hat{k}\ge6$ in Theorem \ref{thm t power k}, we have
            $\sigma^{4-\frac{2}{\hat{k}+2}}
            \ge(4\sigma)^\frac{15}{4}(\frac{1}{4})^4$.
    Then there exist constants $c_1,c_2(\eta)>0$ such that
            on $B_{\tilde{g}(t)}(x,1)$,
    \[
        \Rm_{\tilde{g}(t)}+t\mathcal{I}_{\tilde{g}(t)}\in\mathfrak{C}_{\eta,\mu}\text{ for all }t\le
                \delta_1^{-2}\wedge\frac{c_1\gamma^2a^{-1}}{16^2\delta_1^2}
                \wedge\frac{c_2\gamma^\frac{15}{4}a^{-2}}{16^4\delta_1^4}.
    \]
    Scaling back, it arrives that on $B_{g(t)}(x,\delta_1\sqrt{t_0})$,
    \[
        \Rm_{g(t)}+\frac{t}{\delta_1^4t_0^2}\mathcal{I}_{g(t)}\in\mathfrak{C}_{\eta,\mu}
                \text{ for all }t\le t_0\wedge\frac{c_1\gamma^2a^{-1}}{16^2}t_0
                \wedge\frac{c_2\gamma^\frac{15}{4}a^{-2}}{16^4\delta_1^2}t_0.
    \]

    Assume $\delta_1\sqrt{t_0}\le1$.
    Since $\Ric_{g_0}\ge0$, by volume comparison,
    \[
        \frac{\VolB_{g_0}(x,\delta_1\sqrt{t_0})}{(\delta_1\sqrt{t_0})^4}
                \ge\VolB_{g_0}(x,1)\ge v_0.
    \]
    We may apply Lemma \ref{lma pseudo.} on $B_{g(t)}(x,\delta_1\sqrt{t_0})$
            with $r=\delta_1\sqrt{t_0}$ and $K=1$ since $\frac{1}{\delta_1^2t_0}
            \ge\frac{t}{\delta_1^4t_0^2}$ for all $t\le t_0$.
    Then there exist $\bar{T}(\mu,v_0),C_0(\mu,v_0)>0$ such that on
            $B_{g_0}(p,r_0-L\sqrt{t_0})$,
    \[
        |\Rm|_{g(t)}\le\frac{C_0}{t}
                \text{ and }\Inj_{g(t)}(x)\ge\sqrt{C_0^{-1}t}
                \text{ for all }t\le t_0\wedge\bar{T}\delta_1^2t_0.
    \]

    To prove the claim, we need the followings to hold:
    \[
        \frac{c_1\gamma^2a^{-1}}{16^2}\ge1,
        \quad\frac{c_2\gamma^\frac{15}{4}a^{-2}}{16^4\delta_1^2}\ge1
        \quad\text{and}\quad\bar{T}\delta_1^2\ge1.
    \]
    This can be achieved by choosing $\delta_1=\max\{1,\bar{T}^{-\frac{1}{2}}\}$
            and
    \[
        \gamma=\max\{16(a c_1^{-1})^\frac{1}{2},32(a^2c_2^{-1}\delta_1^2)
                ^\frac{4}{15},4\delta_1+1\},
    \]
    which finishes the proof of the claim \ref{claim5.1}.
\end{proof}

    Let $U:=B_{g_0}(p,r_0-L\sqrt{t_0})$.
    By applying Proposition \ref{fact local RF} on $(U,g(t_0))$ with
            $\rho:=\sqrt{C_0^{-1}t_0}$,
            there exist constants $\alpha\in(0,1]$ and $\Lambda>1$ such
            that we can extend the Ricci flow $g(t)$ to $[0,(1+\nu)^2t_0]$ on
            $U_{g(t_0)}^\rho:=\{x\in U\mid B_{g(t_0)}(x,\rho)\subset\subset U\}$
            such that
    \[
        |\Rm|_{g(t)}\le\Lambda\rho^{-2} \text{ for }t\in[t_0,(1+\nu)^2t_0].
    \]
    where $(1+\nu)^2t_0:=t_0+\alpha\rho^2=(1+\alpha C_0^{-1})t_0$ with some
            $\nu(\mu,v_0)>0$.

    Now we choose $a:=\Lambda C_0(1+\nu)^2$.
    Then for all $t\in(0,(1+\nu)^2t_0]$, we have
    \[
        |\Rm|_{g(t)}\le\frac{\Lambda\rho^{-2}(1+\nu)^2t_0}{t}=
                \frac{a}{t}\text{ on }U_{g(t_0)}^\rho.
    \]

\begin{claim} \label{claim5.2}
    $U_{g(t_0)}^\rho\supset B_{g_0}(p,r_0-2L\sqrt{t_0})$.
\end{claim}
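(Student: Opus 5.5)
We need to show $B_{g_0}(p,r_0-2L\sqrt{t_0})\subset U^\rho_{g(t_0)}$, i.e. for any $x\in B_{g_0}(p,r_0-2L\sqrt{t_0})$ the ball $B_{g(t_0)}(x,\rho)$ is compactly contained in $U=B_{g_0}(p,r_0-L\sqrt{t_0})$. The plan is to control $g(t_0)$-distances by $g_0$-distances using the shrinking balls lemma \cite[Corollary 3.3]{ST22JDG}. This is possible because on $B_{g_0}(p,r_0)\times(0,t_0]$ we already have $|\Rm|_{g(t)}\le a/t$ by the choice of $t_0$. Both radii involved, $\rho=\sqrt{C_0^{-1}t_0}$ and $L\sqrt{t_0}$, scale like $\sqrt{t_0}$, so the argument reduces to a numerical comparison of constants. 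Here $L$ is still free, subject only to $L\ge\gamma+\beta\sqrt a$.

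Fix $x\in B_{g_0}(p,r_0-2L\sqrt{t_0})$. Exactly as in the proof of Claim \ref{claim5.1}, the shrinking balls lemma applied with curvature bound $a/t$ on $B_{g_0}(x,L\sqrt{t_0})\subset B_{g_0}(p,r_0)$ gives, for $t\le t_0$,
\[
B_{g(t)}(x,\rho)\subset B_{g_0}\bigl(x,\rho+\beta\sqrt{at}\bigr)\subset B_{g_0}\bigl(x,(C_0^{-1/2}+\beta\sqrt a)\sqrt{t_0}\bigr).
\]
This requires the hypothesis $\rho+\beta\sqrt{at_0}\le L\sqrt{t_0}$, i.e. that the $g_0$-ball of that radius sits inside the region where the curvature bound holds.

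We therefore impose the additional requirement $L> C_0^{-1/2}+\beta\sqrt a$, together with $L\ge\gamma+\beta\sqrt a$. Since $a=\Lambda C_0(1+\nu)^2$ depends only on $(\mu,v_0)$ and $\gamma$ only on $(\eta,\mu,v_0)$, the resulting $L$ depends only on $(\eta,\mu,v_0)$, as allowed. The triangle inequality then gives
\[
B_{g(t_0)}(x,\rho)\subset B_{g_0}\bigl(x,(C_0^{-1/2}+\beta\sqrt a)\sqrt{t_0}\bigr)\subset B_{g_0}\bigl(p,r_0-2L\sqrt{t_0}+(L-\epsilon)\sqrt{t_0}\bigr)
\]
for some $\epsilon>0$. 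The closure of the last ball lies in $B_{g_0}(p,r_0-L\sqrt{t_0})=U$. Closed $g_0$-balls of radius $<r_0$ around $p$ are compact because $B_{g_0}(p,s_0)\subset\subset M$, so $B_{g(t_0)}(x,\rho)\subset\subset U$.

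The main obstacle is applying the shrinking balls lemma to the ball around $x$, whose radius is comparable to $\sqrt{t_0}$, while the curvature hypothesis is only known on $B_{g_0}(p,r_0)$. The lemma requires the relevant $g_0$-ball to be compactly contained in that region, and this is ensured precisely by $x$ lying at $g_0$-distance at least $2L\sqrt{t_0}$ from $\partial B_{g_0}(p,r_0)$. Some care is also needed because the flow time-interval does not depend on $L$ here: $t_0$ was fixed before $L$, but the bound $a/t$ holds on all of $B_{g_0}(p,r_0)$. If needed, one can simply shrink $t_0$ further so that $2L\sqrt{t_0}<r_0$, keeping the balls nonempty.
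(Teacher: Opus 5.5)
Your argument is correct and is essentially the paper's: both fix $x$, apply the shrinking balls lemma on $B_{g_0}(x,L\sqrt{t_0})$, and use the triangle inequality to get $B_{g(t_0)}(x,\rho)\subset\subset U$. The only difference is the curvature bound fed into the lemma. The paper uses the improved bound $|\Rm|\le C_0/t$ from Claim~\ref{claim5.1} on $U$ and requires $L>C_0^{-1/2}+\beta\sqrt{C_0}$. You use the a priori bound $a/t$ on $B_{g_0}(p,r_0)$ and require $L>C_0^{-1/2}+\beta\sqrt{a}$, which is equally admissible because $a$ is fixed before $L$.
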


\begin{proof}[Proof of the Claim \ref{claim5.2}]
    For any $x\in B_{g_0}(p,r_0-2L\sqrt{t_0})$, we need to show that
            $B_{g(t_0)}(x,\rho)\subset\subset U$.
    To this end, we choose $L>\sqrt{C_0^{-1}}+\beta\sqrt{C_0}$.
    Then $\rho=\sqrt{C_0^{-1}t_0}<L\sqrt{t_0}-\beta\sqrt{C_0t_0}$.
    By the shrinking balls lemma \cite[Corollary 3.3]{ST22JDG},
    \begin{align*}
        B_{g(t_0)}(x,\rho)
                &\subset\subset B_{g(t_0)}(x,L\sqrt{t_0}-\beta\sqrt{C_0t_0}) \\
                &\subset B_{g_0}(x,L\sqrt{t_0})
                \subset B_{g_0}(p,r_0-L\sqrt{t_0})=U.
    \end{align*}
    This finishes the proof of the claim \ref{claim5.2}.
\end{proof}

    Now we have obtained a local Ricci flow with
    \[
        |\Rm|_{g(t)}\le\frac{a}{t}\text{ on }B_{g_0}(p,r_0-2L\sqrt{t_0})
                \times(0,(1+\nu)^2t_0].
    \]

    Suppose we have constructed a local Ricci flow with
    \[
        |\Rm|_{g(t)}\le\frac{a}{t}\text{ on }B_{g_0}(p,r_k)\times(0,t_k],
    \]
    where $t_k:=(1+\nu)^{2k}t_0$ and $r_k:=r_{k-1}-2L\sqrt{t_{k-1}}$ for
            $k=1,2,\cdots$.
    We can repeat the above argument to extend the Ricci flow to
            $B_{g_0}(p,r_{k+1})\times(0,t_{k+1}]$.
    This process is straightforward and we sketch some key points
            for readers' convenience:
    \begin{enumerate}
        \item[$1^\circ$] For any $x\in B_{g_0}(p,r_k-L\sqrt{t_k})$ and
                $t\in[0,t_k]$, we have
            \[
                B_{g(t)}(x,\gamma\sqrt{t_k})\subset B_{g_0}(p,r_k);
            \]
        \item[$2^\circ$] Apply Theorem \ref{thm t power k} on
                $B_{g(t)}(x,(\delta_1+\frac{\gamma}{4})\sqrt{t_k})$
                with proper scaling to obtain that
            \[
                \Rm_{g(t)}+\frac{t}{\delta_1^4t_k^2}\mathcal{I}_{g(t)}\in\mathfrak{C}_{\eta,\mu}
                        \text{ on }B_{g(t)}(x,\delta_1\sqrt{t_k})\times[0,t_k];
            \]
        \item[$3^\circ$] Whenever $\delta_1\sqrt{t_k}\le1$, we can apply Lemma
                \ref{lma pseudo.} to arrive that on $B_{g_0}(p,r_k-L\sqrt{t_k})$,
            \[
                |\Rm|_{g(t)}\le\frac{C_0}{t}
                        \text{ and }\Inj_{g(t)}(x)\ge\sqrt{C_0^{-1}t}
                        \text{ for all }t\le t_k;
            \]
        \item[$4^\circ$] Let $U_k:=B_{g_0}(p,r_k-L\sqrt{t_k})$.
            By the shrinking balls lemma \cite[Corollary 3.3]{ST22JDG},
            \[
                U_{k,g(t_k)}^\rho\supset B_{g_0}(p,r_k-2L\sqrt{t_k})
                        =B_{g_0}(p,r_{k+1});
            \]
        \item[$5^\circ$] By applying Proposition \ref{fact local RF} on
                $(U_k,g(t_k))$ with $\rho:=\sqrt{C_0^{-1}t_k}$, we can extend
                the Ricci flow to obtain that
            \[
                |\Rm|_{g(t)}\le\frac{a}{t}\text{ on }B_{g_0}(p,r_{k+1})
                        \times[0,t_{k+1}].
            \]
    \end{enumerate}

    The process stops at the $k$-th step where $r_{k+1}\le r_0-1=s_0-2$ or
            $\delta_1\sqrt{t_k}>1$.
    For the first case, we can choose $i\le k$ such that $r_i>r_0-1$ and
            $r_{i+1}\le r_0-1$ but $\delta_1\sqrt{t_i}\le1$.
    Then we have
    \[
        1\le r_0-r_{i+1}=2L(\sqrt{t_0}+\sqrt{t_1}+\cdots+\sqrt{t_i})
                \le\frac{2L\sqrt{t_i}}{1-(1+\nu)^{-1}},
    \]
    which implies that
    \[
        t_i\ge\frac{\nu^2}{4L^2(1+\nu)^2}=:\sigma_1(\eta,\mu,v_0)>0.
    \]
    If it is the second case, then $\delta_1\sqrt{t_{k-1}}\le1$ and
            $r_k>r_0-1$ but $\delta_1\sqrt{t_{k}}>1$.
    Then we have $t_k>\delta_1^{-2}$.

    In any cases, we have shown that there exist $T(\eta,\mu,v_0),D_1(\mu,v_0)>0$
            such that there exists a Ricci flow $g(t),t\in[0,T]$ on
            $B_{g_0}(p,s_0-2)$ with $g(0)=g_0$ satisfying
    \[
        |\Rm|_{g(t)}\le\frac{D_1}{t}\text{ on }B_{g_0}(p,s_0-2)\times(0,T].
    \]
    This completes the proof.
\end{proof}

\dummyline{1}

Then by a limiting argument, we obtain the following global existence for Ricci flow.

\begin{corollary} \label{cor global exist.0}
    For any $v_0>0$, $\eta\in(0,1]$ and $\mu\ge\eta+1$, there exist
            $T(\eta,\mu,v_0)>0$ and $D_1(\mu,v_0)>0$ such that
            the following holds.
    Suppose $(M,g_0)$ is a $4$-dimensional complete noncompact Riemannian manifold satisfying
    \[\left\{\;\begin{aligned}
        &\Rm_{g_0}\in\mathfrak{C}_{\eta,\mu}; \\
        &\VolB_{g_0}(x,1)\ge v_0\text{ for all }x\in M.
    \end{aligned}\right.\]
    Then there exists a complete solution of Ricci flow $(M,g(t))$
            on $t\in[0,T]$ with $g(0)=g_0$ such that
    \[\left\{\;\begin{aligned}
        &\sup\limits_{M}|\Rm|_{g(t)}\le\frac{D_1}{t}\text{ for all }t\in(0,T]; \\
        &\Rm_{g(t)}\in\mathfrak{C}_{\eta,\mu}.
    \end{aligned}\right.\]
\end{corollary}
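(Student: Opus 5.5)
We will obtain the global flow as a limit of the local flows given by Theorem \ref{thm local exist.0}. Fix $p\in M$ and an exhaustion radius $s_i\to\infty$ with $s_i\ge4$. Since $(M,g_0)$ is complete, $B_{g_0}(p,s_i)\subset\subset M$. The hypotheses $\Rm_{g_0}\in\mathfrak{C}_{\eta,\mu}$ and $\VolB_{g_0}(x,1)\ge v_0$ hold everywhere, so Theorem \ref{thm local exist.0} applies on each such ball. It yields Ricci flows $g_i(t)$ on $B_{g_0}(p,s_i-2)\times[0,T]$ with $g_i(0)=g_0$ and $|\Rm|_{g_i(t)}\le D_1/t$. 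The key point is that $T(\eta,\mu,v_0)$ and $D_1(\mu,v_0)$ are independent of $i$.

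Next we extract a convergent subsequence. On any fixed compact set $\Omega\subset M$ and any $[\tau,T]$ with $\tau>0$, the curvatures of $g_i$ are uniformly bounded by $D_1/\tau$. We also need uniform equivalence of $g_i(t)$ with $g_0$ near $t=0$, where the $D_1/t$ bound alone is not integrable. For this we use local smoothness of $g_0$: $g_0$ has bounded curvature on a neighborhood of $\Omega$. Pseudolocality-type local estimates (e.g.\ Shi's local estimates combined with the standard argument, as in \cite{HL21JGA,ST22JDG,Hochard19PhD}) then show that $|\Rm|_{g_i(t)}$ and all its derivatives are bounded uniformly in $i$ on $\Omega\times[0,T']$, for some $T'$ depending on $\Omega$. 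Combined with Shi's interior derivative estimates on $[\tau,T]$, this gives uniform $C^k_{loc}$ bounds for $g_i(t)$ on $M\times[0,T]$ relative to $g_0$. We then apply Arzelà–Ascoli with a diagonal argument to get a subsequence converging in $C^\infty_{loc}(M\times[0,T])$ to a Ricci flow $g(t)$ with $g(0)=g_0$ and $\sup_M|\Rm|_{g(t)}\le D_1/t$.

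Completeness of $g(t)$ follows because $\Ric\ge0$ along the limiting flow. We argue via the cone: $\Rm_{g_0}\in\mathfrak{C}_{\eta,\mu}$ and Lemma \ref{lma 2-non flag} give the sign, which persists once we show the cone is preserved. Alternatively, we use the shrinking balls lemma \cite[Corollary 3.3]{ST22JDG}: since $|\Rm|\le D_1/t$, we have $d_{g(t)}\ge d_{g_0}-\beta\sqrt{D_1t}$, so $g(t)$-bounded sets are $g_0$-bounded and hence precompact.

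Finally, the cone is preserved. The limit flow is complete with $|\Rm|(x,t)\le D_1/t\le \frac{a(1+d_{g_0}(x,p)^2)}{t}$ for $a=\max\{D_1,3\}$. So Corollary \ref{cor preseve.} applies directly and gives $\Rm_{g(t)}\in\mathfrak{C}_{\eta,\mu}$ for all $t\in[0,T]$.

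We expect the main obstacle to be the uniform-in-$i$ control near $t=0$ needed for convergence up to the initial time. Attaining the initial data smoothly requires local curvature bounds independent of $i$. Our first attempt will be to use local boundedness of $\Rm_{g_0}$ together with the local estimate \cite[Lemma 4.2 type]{ST22JDG}/Shi, to bound curvature on compact sets for a short time.
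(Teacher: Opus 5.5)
Your proposal is correct and follows the paper's route: apply Theorem \ref{thm local exist.0} on exhausting balls with $i$-independent $T$ and $D_1$, extract a locally smooth subsequential limit, and obtain preservation of the cone from Corollary \ref{cor preseve.} with $a=\max\{D_1,3\}$. The near-$t=0$ control you flag is handled in the paper, following \cite{CHL24AnnPDE}, in two steps: the curvature bound itself comes from Chen's local estimate \cite{Chen09JDG}, which combines your $D_1/t$ bound with local bounds on $\Rm_{g_0}$ (Shi's estimates do not bound the curvature itself), and the modified Shi estimates \cite[Theorem 14.16]{CGIKLN08Book} then control the derivatives; for completeness of $g(t)$, keep your shrinking-balls argument and drop the circular one via $\Ric\ge0$.
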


\dummyline{0.5}

\begin{proof}
    Fix any $p\in M$, by applying Theorem \ref{thm local exist.0} to
            $B_{g_0}(p,R+2)$ for $R\ge2$, we obtain a Ricci flow $g_R(t)$
            on $B_{g_0}(p,R)$ for $t\in[0,T]$ with $g_R(0)=g_0$ such that
    \[
        \sup\limits_{B_{g_0}(p,R)}|\Rm|_{g_R(t)}\le\frac{D_1}{t}\text{ for all }t\in(0,T].
    \]

    Using the argument in the proof of \cite[Theorem 1.1]{CHL24AnnPDE},
            which is based on Chen's local estimates \cite{Chen09JDG}
            (see also \cite{Simon08IMRN}) and
            the modified Shi's higher order estimates \cite[Theorem 14.16]
            {CGIKLN08Book}, we can extract convergent subsequence in locally
            smooth sense to obtain a smooth complete solution $g(t)$ to the
            Ricci flow with $g(0)=g_0$ on $M\times[0,T]$.
    The curvature estimate passes to the limit and $\Rm_{g(t)}\in
            \mathfrak{C}_{\eta,\mu}$ follows from Corollary \ref{cor preseve.}.
\end{proof}

\dummyline{1}

Applying Corollary \ref{cor global exist.0}, we prove the following topological gap theorem. 

\begin{theorem} \label{thm diff. R4}
    Let $(M,g_0)$ be a complete noncompact $4$-dimensional Riemannian manifold with
            $\Rm_{g_0}\in\mathfrak{C}_{\eta,\mu}$ for some $\eta\in[0,1]$ and
            $\mu\ge\eta+1$ with $\mu>1$.
    If $(M,g_0)$ has maximal volume growth, then $M$ is diffeomorphic to
            $\mathbb{R}^4$.
\end{theorem}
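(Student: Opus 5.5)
Since $\eta\le1$, Lemma \ref{lma 2-non flag} gives $\Ric_{g_0}\ge0$. Maximal volume growth then gives, via Bishop--Gromov, a $v>0$ with $\VolB_{g_0}(x,r)\ge vr^4$ for all $x\in M$ and $r>0$. If $\eta=0$ we use the inclusion $\mathfrak{C}_{0,\mu}\subset\mathfrak{C}_{\eta',\mu}$ for any small $\eta'\in(0,\min\{1,\mu-1\}]$ and may therefore assume $\eta\in(0,1]$.

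\emph{Step 1: an immortal flow.} The volume bound is scale invariant, and so is the cone. Hence for every $R\ge1$ the rescaled metric $R^{-2}g_0$ satisfies the hypotheses of Corollary \ref{cor global exist.0} with the same $v_0=v$. This gives a complete flow $g_R(t)$ on $[0,TR^2]$ with $g_R(0)=g_0$, $|\Rm|_{g_R(t)}\le D_1/t$ and $\Rm_{g_R(t)}\in\mathfrak{C}_{\eta,\mu}$. These flows satisfy curvature bounds $D_1/t$ that are uniform in $R$. As in the proof of Corollary \ref{cor global exist.0} (Chen's local estimates together with modified Shi estimates), we take a diagonal subsequence $R\to\infty$ and obtain a complete immortal flow $g(t)$, $t\in[0,\infty)$, with $g(0)=g_0$, $|\Rm|_{g(t)}\le D_1/t$ and $\Rm_{g(t)}\in\mathfrak{C}_{\eta,\mu}$. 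In particular $\Ric_{g(t)}\ge0$ for all $t$, so distances are nonincreasing: $d_{g(t)}\le d_{g_0}$.

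\emph{Step 2: injectivity radius at scale $\sqrt t$.} Because $\Ric_{g(t)}\ge0$ and the flow is smooth on each compact time interval, Lemma \ref{lma pseudo.} applies with $K=0$ (hence $K=1$) on the ball $B_{g(t)}(x,r)$ for arbitrarily large $r$. The volume hypothesis at time $0$ holds at every scale, and the restriction $t\le\bar T r^2$ is harmless once $r$ is large. This yields $\Inj_{g(t)}(x)\ge\sqrt{C_0^{-1}t}$ and $|\Rm|_{g(t)}\le C_0/t$ for all $x$ and all $t>0$. Now fix $p\in M$ and set $\rho_t:=c\sqrt t$, with $c$ small depending on $C_0$ so that $\rho_t$ is below both the injectivity radius and the convexity radius $\sim\min\{\Inj,\;\pi/(2\sqrt{C_0/t})\}$. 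Then $B_{g(t)}(p,\rho_t)$ is a strongly convex geodesic ball, and $\exp_p$ maps it diffeomorphically from a round ball in $\mathbb{R}^4$. Since distances shrink, $B_{g_0}(p,\rho_t)\subset B_{g(t)}(p,\rho_t)$, so the sets $U_k:=B_{g(t_k)}(p,\rho_{t_k})$ exhaust $M$ as $t_k\to\infty$.

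\emph{Step 3: from an exhaustion by balls to $\mathbb{R}^4$.} We choose $t_{k+1}$ so large that $\overline{U_k}\subset U_{k+1}$. The main obstacle is showing that an increasing union of such smooth balls is diffeomorphic to $\mathbb{R}^4$, since in dimension $4$ one cannot simply invoke a smooth annulus theorem. I would first try to show that the region $\overline{U_{k+1}}\setminus U_k$ is a smooth product $S^3\times[0,1]$. For this I would use the $g(t_{k+1})$-distance function $d_{k+1}:=d_{g(t_{k+1})}(p,\cdot)$. Within the injectivity radius it is smooth and has no critical points away from $p$. The goal is to show that $\partial U_k$, which is a $g(t_k)$-distance sphere, is transverse to the radial field $\nabla d_{k+1}$. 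By Hessian comparison with the curvature bound $C_0/t_k$, together with the $C^1$-closeness of $g(t_k)$ and $g(t_{k+1})$ on $U_k$ after rescaling by $t_k$ (Shi's estimates, with $t_{k+1}/t_k$ bounded), the $g(t_k)$-sphere of radius $\rho_{t_k}$ is star-shaped with respect to $\exp^{g(t_{k+1})}_p$. If needed, we take $t_{k+1}/t_k$ close to $1$ and use many steps. Flowing along $\nabla d_{k+1}$ then gives the product structure. Gluing these collars successively produces a diffeomorphism $M\cong\mathbb{R}^4$, as in the Gromoll--Meyer / Cheeger--Gromoll style exhaustion argument.
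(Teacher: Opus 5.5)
Your proof is correct. Steps 1 and 2 essentially follow the paper, with one substitution. Step 3 replaces a citation by a direct argument.

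\textbf{What the paper does.} It reduces to $\eta>0$ and builds the immortal flow exactly as you do: Corollary \ref{cor global exist.0} applied to $R^{-2}g_0$, the limiting argument of \cite{CHL24AnnPDE}, and Corollary \ref{cor preseve.}. It gets $\Inj_{g(t)}\ge\sqrt{t/a}$ from Cheeger--Gromov--Taylor together with the scale-invariant volume bound. It then cites \cite[Theorem 1.1]{HP25BLMS} as a black box to conclude $M\cong\mathbb{R}^4$.

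\textbf{What you do differently.} You get the injectivity radius bound by applying Lemma \ref{lma pseudo.} to the limit flow at arbitrarily large scales $r$. This is legitimate because maximal volume growth gives the volume hypothesis at every scale. Note that the hypothesis you must check is $\Rm_{g(t)}+Kr^{-2}\mathcal{I}\in\mathfrak{C}_{\eta,\mu}$, not $\Ric\ge0$. It follows from $\Rm_{g(t)}\in\mathfrak{C}_{\eta,\mu}$, since adding a nonnegative multiple of $\mathcal{I}$ preserves the cone. You then prove the topological conclusion yourself. You use $\Ric_{g(t)}\ge0$ to make $d_{g(t)}$ nonincreasing. That makes the exponential balls $B_{g(t_k)}(p,c\sqrt{t_k})$ nested with closures and exhausting $M$ for any increasing $t_k\to\infty$. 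This makes your argument self-contained, with the Ricci sign turning the exhaustion step into something elementary. The paper's route is shorter but outsources exactly this step.

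\textbf{Step 3 can be simplified.} The obstacle you flag is not really there. Each $\overline{U_k}$ is the image of a closed round ball under $\exp_p$, so it is a smoothly embedded closed $4$-disc. The Palais--Cerf disc theorem says smooth embeddings of a disc are unique up to compactly supported ambient isotopy, and it holds in every dimension. It gives $\overline{U_{k+1}}\setminus U_k\cong S^3\times[0,1]$ directly. The genuine dimension-$4$ difficulty (smooth Schoenflies) concerns smooth $3$-spheres not known to bound smooth balls, which does not arise here. So you can drop the star-shapedness argument. It does work, but only once $t_{k+1}/t_k$ is close to $1$. With the ratio merely bounded, Shi's estimates give $C^1$-bounds between the rescaled metrics, not $C^1$-closeness.
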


\dummyline{0.5}

\begin{proof}
    It suffices to show the case $\eta>0$ since $\mathfrak{C}_{0,\mu}\subset
            \mathfrak{C}_{\varepsilon,\mu}$ for any small $\varepsilon>0$.
      By assumption, there exists $v_0>0$ such that
    \[
        \VolB_{R^{-2}g_0}(x,1)\ge v_0\text{ for all }R>0\text{ and for all }x\in M.
    \]
    Clearly, $\mathfrak{C}_{\eta,\mu}$ is scaling invariant.
    By applying Corollary \ref{cor global exist.0} to $R^{-2}g_0$ for
            $R\to\infty$ and rescaling it back, we obtain a sequence of Ricci
            flow $g_R(t)$ on $[0,TR^2]$ with $g_R(0)=g_0$ and
    \[
        |\Rm|_{g_R(t)}\le\frac{D_1}{t}\text{ for }t\in(0,TR^2].
    \]
    Combining with volume lower bound, the injectivity radius estimate
            follows from Cheeger-Gromov-Taylor \cite{CGT82JDG}.
    Then we can use the argument in the proof of \cite[Theorem 1.1]{CHL24AnnPDE}
            again to obtain a complete long-time solution $g(t)$ to the Ricci
            flow with $g(0)=g_0$ and desired injectivity radius estimate.
    Moreover, $\Rm_{g(t)}\in\mathfrak{C}_{\eta,\mu}$ for all $t\ge0$ by
            Corollary \ref{cor preseve.}.
    Then the result follows from \cite[Theorem 1.1]{HP25BLMS}.
\end{proof}

\dummyline{1}

Inspired by the work \cite{CLP24arXiv} by Chan, Lee and Peachey, we observe that if a Ricci flow coming out of a metric cone has curvature operator in $\mathfrak{C}_{\eta,\mu}$ and maximal volume growth, then it must be an expanding Ricci soliton. More precisely, we obtain the following. 

\begin{theorem} \label{thm soliton}
    Suppose $(M,g(t))$ is a $4$-dimensional complete noncompact Ricci flow on $M\times(0,+\infty)$
            such that for some $v_0,\alpha>0$, $\eta\in[0,1]$ and $\mu\ge\eta+1$
            with $\mu>1$, the following properties hold for all $t>0$:
    \begin{enumerate}
        \item[\textup{(a)}] $\Rm_{g(t)}\in\mathfrak{C}_{\eta,\mu}$;
        \item[\textup{(b)}] $|\Rm|_{g(t)}\le\alpha t^{-1}$;
        \item[\textup{(c)}] $\AVR_{g(t)}\ge v_0>0$.
    \end{enumerate}
    Suppose further that $(M,d_0,x_0)$ is isometric to a metric cone
            $(C(X),d_c,0)$, where $d_0$ is the well-defined metric on $M$ given
            by pointwise limit of $d_{g(t)}$ as $t\rightarrow0$
            (see \cite[Proposition 2.2]{CLP24arXiv}).
    Then there exists a smooth function $u$ on $M\times(0,+\infty)$ such that
    \begin{enumerate}
        \item[\textup{(\romannumeral1)}] $2t\Ric_{g(t)}+g(t)-2\nabla^2u=0$ on
                $M\times(0,+\infty)$;
        \item[\textup{(\romannumeral2)}] $u(\cdot,t)\to\frac{1}{4}
                d_0(x_0,\cdot)^2$ in $C_\mathrm{loc}^\beta(M)$ for some
                $\beta\in(0,1)$ as $t\rightarrow0$.
    \end{enumerate}
\end{theorem}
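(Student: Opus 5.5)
Our plan is to follow the strategy of Chan, Lee and Peachey \cite{CLP24arXiv}: reduce the statement to their general criterion for Ricci flows coming out of metric cones. Their argument needs three inputs along the flow: a $t^{-1}$ curvature bound, a uniform lower bound on asymptotic volume ratio, and a curvature condition that is preserved and controls the geometry. Hypotheses (b) and (c) give the first two directly. The key step is to derive, from (a), the curvature-type conditions their argument uses, namely $\Ric\ge0$ and Hamilton's trace Harnack inequality.

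\textbf{Step 1: nonnegative Ricci curvature and scalar curvature.} Since $\eta\le1$, Lemma \ref{lma 2-non flag} shows that $\Rm_{g(t)}$ has 2-nonnegative flag curvature. Hence $\Ric_{g(t)}\ge0$ and $\mathcal{R}\ge0$ for all $t>0$. Together with (b), this gives the distance distortion estimates $d_{g(t)}\le d_{g(s)}\le d_{g(t)}+C\sqrt{\alpha}(\sqrt t-\sqrt s)$. These estimates make $d_0$ well defined and let the flow converge to the cone as $t\to0$.

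\textbf{Step 2: the Harnack inequality.} The main obstacle is establishing a Harnack-type inequality. For an expander limit one needs Hamilton's trace Harnack $\partial_t\mathcal{R}+\mathcal{R}/t+2\langle\nabla \mathcal{R},V\rangle+2\Ric(V,V)\ge0$, or Brendle's version under weakly PIC2. Our cone is not PIC2, but it lies in WPIC and is uniformly PIC off the flat points, by the estimate in the proof of Lemma \ref{lma ancient sol.}. I would first try the following route. On each compact time interval $[s,\infty)$ the curvature is bounded by (b). In dimension $4$, Brendle's Harnack holds for complete bounded-curvature flows with weakly PIC. Alternatively, the uniformly PIC estimate lets us apply the argument used in Lemma \ref{lma ancient sol.}: by \cite[Lemma 4.4]{CL23MathAnn}, any blow-up or blow-down limit has nonnegative curvature operator, so Hamilton's Harnack holds there. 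If Harnack is only available in the limit, we would instead run the CLP rigidity argument on blow-down limits $g_\lambda(t)=\lambda^{-1}g(\lambda t)$. These limits exist by (b), (c) and \cite{CGT82JDG}, they come out of the same cone, and by uniqueness they coincide with $g$ up to scaling.

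\textbf{Step 3: constructing the soliton potential.} Following \cite{CLP24arXiv}, we define $u$ as a solution of $\partial_tu=\Delta u$ minus the appropriate normalisation, namely $\partial_t u=|\nabla u|^2-\mathcal R$ type, i.e.\ the expander potential solving the backward conjugate equation, with initial data $\frac14 d_0(x_0,\cdot)^2$. We then consider the expander quantity $P=2t\Ric+g-2\nabla^2u$ and show that $\int|P|^2$, suitably weighted by the heat kernel, is monotone. The monotonicity uses the Harnack inequality of Step 2 and the cone structure of $d_0$: the Euclidean-type identity $\nabla^2\frac14d_c^2=\frac12 g_c$ on the cone makes $P\to0$ as $t\to0$ in an integrated sense. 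The maximum principle, together with the $t^{-1}$ bounds and $\Ric\ge0$ controlling the cut-offs, then forces $P\equiv0$, which is (i).

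\textbf{Step 4: convergence of $u$.} Local $C^\beta$ convergence of $u(\cdot,t)$ to $\frac14d_0(x_0,\cdot)^2$, which is (ii), follows from the gradient bound $|\nabla u|^2\le C(u+t)$, obtained from (i) and $\Ric\ge0$, combined with the H\"older equivalence of $d_{g(t)}$ and $d_0$ from Step 1.
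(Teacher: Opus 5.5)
Your Step 1 agrees with the paper. Steps 2--3, however, contain a genuine gap. You make a trace Harnack inequality the key ingredient, and neither route you propose for it works.

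Brendle's Harnack inequality is proved under weakly PIC2 (nonnegative isotropic curvature of $M\times\mathbb{R}^2$), not under weakly PIC. The cone $\mathfrak{C}_{\eta,\mu}$ is not contained in WPIC2: the operator with $B=0$, $A=\mathrm{diag}(-2,3,5)$, $C=\mathrm{diag}(\frac12,\frac{11}{4},\frac{11}{4})$ lies in $\mathfrak{C}_{\eta,\mu}$ for $\mu\ge8$, yet it has a $2$-plane of sectional curvature $\frac12(-2+\frac12)<0$.

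Your fallback via \cite[Lemma 4.4]{CL23MathAnn} needs an \emph{ancient} solution. Every rescaling $\lambda^{-1}g(\lambda t)$ of your flow, and every limit of such rescalings, lives only on $(0,\infty)$, so that lemma does not apply. The further claim that blow-down limits ``coincide with $g$ up to scaling by uniqueness'' presupposes uniqueness of Ricci flows coming out of the cone. That is essentially the self-similarity you are trying to prove, so the argument is circular.

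Step 3 is also not a workable construction. A backward conjugate-type equation cannot be solved forward from data at $t=0$. The datum $\frac14 d_0(x_0,\cdot)^2$ is not smooth and must be approximated. The monotonicity of a heat-kernel-weighted $\int|P|^2$ is asserted with no mechanism behind it.

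The missing idea, which the paper takes from \cite{CLP24arXiv}, avoids Harnack entirely.
\begin{enumerate}
\item Take smooth square-distance-like initial data $u_{i,0}$ from \cite[Lemma 5.1]{CLP24arXiv}. These are built via Cheeger--Jiang--Naber regularization, which is where (c) and the Gaussian heat kernel bounds enter.
\item Solve the \emph{forward} heat equation $\partial_t v_i=\Delta v_i$ with this data.
\item Both $\nabla^2 v_i$ and $\Ric$ evolve by the Lichnerowicz heat operator, and $\partial_t g=-2\Ric$ cancels the extra term in $\partial_t(2t\Ric)$. Hence $S_i=-2t\Ric-g+2\nabla^2 v_i$ solves $\partial_t S_i=\Delta_L S_i$.
\item Consequently the negative part of the lowest eigenvalue of $S_i\owedge g$ (and likewise the positive part of the largest) satisfies $(\partial_t-\Delta)\varphi_i\le\mathcal{R}\varphi_i$ in the barrier sense.
\item The only curvature input needed for the reaction term in \cite[Lemma 3.2]{CLP24arXiv} is $R_{1i1i}+R_{2i2i}\ge0$, i.e.\ 2-nonnegative flag curvature. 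Lemma \ref{lma 2-non flag} supplies this for $\eta\le1$.
\item The local maximum principle and heat kernel estimates then force $\varphi_i\to0$. Uniform estimates on $v_i$ give a limit $u$ satisfying (i) and (ii).
\end{enumerate}
The substance of the paper's proof is the observation that the Chan--Lee--Peachey argument uses WPIC1 only through $R_{1i1i}+R_{2i2i}\ge0$. Your proposal does not contain this observation, and the Harnack-based route you substitute for it is not available for $\mathfrak{C}_{\eta,\mu}$.
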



\dummyline{0.5}

\begin{proof}

In fact, the proof in \cite[Theorem 6.1]{CLP24arXiv} has already provided a more general result, although the statement of \cite[Theorem 6.1]{CLP24arXiv} assumes
            $\Rm_{g(t)}\in\mathfrak{C}_{\mathrm{WPIC1}}$. One can replace assumption \textup{(a)} above by that $\Rm_{g(t)}$ satisfies the $2$-nonnegative flag curvature condition for all $t>0$, which is ensured
            by Lemma \ref{lma 2-non flag} for our case. We sketch their proof here for readers' convenience.

The 2-nonnegative flag curvature, which implies nonnegative Ricci curvature,
        together with the curvature decay assumption \textup{(b)}, allows us
        to apply \cite[Proposition 2.2]{CLP24arXiv} to obtain the distance
        distortion estimate and the well-defined limit metric $d_0$ as $t\to0$.
Assumption \textup{(c)} provides the required non-collapsing, which is used
        for the Gaussian heat kernel estimates and the Cheeger-Jiang-Naber
        regularization \cite{CJN21AnnMath}.
Then one can apply \cite[Lemma 5.1]{CLP24arXiv} to obtain a sequence of
        square distance like functions $u_{i,0}$, which are then evolved
        by the heat equation to construct a sequence of approximations $v_i$.
By the uniform bounds and the higher order estimates of $v_i$ given by
        \cite[Proposition 5.3 and Lemma 5.4]{CLP24arXiv}, which are based on
        heat kernel estimates and Shi's estimates, one can extract a subsequence
        converging to the desired function $u$ as $t\to0$.

It remains to show that $u$ satisfies the expanding soliton equation.
Replace $g_i(t)$ by $g(t_i+t)$ and $v_i(t)$ by $v_i(t_i+t)$ and let
\[
    S_i(t):=-2t\Ric_{g_i(t)}-g_i(t)+2\nabla^{2,g_i(t)}v_i(t)\text{ for all }t>0,
\]
and $\varphi_i$ be the negative part of the lowest eigenvalue of
        $S_i(t)\owedge g_i(t)$ or similarly the positive part of the
        largest eigenvalue.
Since $v_i$ satisfies the heat equation and $\Rm_{g_i(t)}$ has 2-nonnegative
        flag curvature,  by the proof in
        \cite[Lemma 3.2]{CLP24arXiv}, one can obtain that
\[
    \Boxg{g_i(t)}\varphi_i\le\mathcal{R}_i\varphi_i,
\]
for $t>0$ whenever $\varphi_i\ge0$ in the barrier sense. We would like to point out that in the proof of \cite[Lemma 3.2]{CLP24arXiv}, the weakly PIC1 condition is only used to guarantee that $R_{1i1i}+R_{2i2i}\geq 0$ for all $i\geq 3$, which can be implied by  the 2-nonnegative
        flag curvature condition. 
With the above inequality established, one can utilize the local maximum
        principle \cite{LT22CJM} and the heat kernel estimates \cite{BCW19Inv}
        to show that $\varphi_i\to0$ as $i\to\infty$, which implies that
        the limit function $u$ satisfies the expanding soliton equation.
This completes the proof.
\end{proof}

\dummyline{1}

The following lemma tells us when $0\le\eta<\frac{9}{16}$, $\mathfrak{C}_{\eta,\mu}$ implies Ricci pinched. 

\begin{lemma} \label{lma Ricci pinched}
    Let $(M,g)$ be a Riemannian $4$-manifold with $\Rm\in\mathfrak{C}
            _{\eta,\mu}$ for some $\eta\in[0,\frac{9}{16})$ and $\mu\ge\eta+1$
            with $\mu>1$.
    Then $(M,g)$ is Ricci pinched in the following sense:
    \[
        \Ric\ge(1-\frac{4}{3}\sqrt{\eta})\frac{\mathcal{R}}{4}g\ge0.
    \]
\end{lemma}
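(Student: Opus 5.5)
The plan is to write $\Ric=\frac{\mathcal{R}}{4}g+\mathring{\Rc}$ and bound the most negative eigenvalue of $\mathring{\Rc}$ by the singular values of $B$. The first condition of $\mathfrak{C}_{\eta,\mu}$ controls those singular values by $\sqrt{\eta}$ times a multiple of $\mathcal{R}$.

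\textbf{Step 1 (nonnegativity of the scalar curvature).} As noted after \eqref{eqn def hatF}, $\hat F^2\ge0$ forces $A_1+A_2\ge0$, and hence $A_3\ge0$. So $\tr(A)\ge0$, and $\tr(A)=\frac{\mathcal{R}}{4}$ gives $\mathcal{R}\ge0$. Averaging also gives
\[
A_1+A_2\le\tfrac23\tr(A)=\tfrac{\mathcal{R}}{6},\qquad C_1+C_2\le\tfrac23\tr(C)=\tfrac{\mathcal{R}}{6}.
\]
With the first cone condition this yields
\[
B_2+B_3\le\sqrt{\eta(A_1+A_2)(C_1+C_2)}\le\sqrt{\eta}\,\tfrac{\mathcal{R}}{6}.
\]
The second inequality in the lemma, $(1-\frac43\sqrt\eta)\frac{\mathcal{R}}4\ge0$, then follows from $\eta<\frac9{16}$ and $\mathcal{R}\ge0$.

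\textbf{Step 2 (eigenvalues of $\mathring{\Rc}$ versus singular values of $B$).} This is the main step. I would diagonalize $\mathring{\Rc}$ in an orthonormal frame $\{e_i\}$, with eigenvalues $r_1\le r_2\le r_3\le r_4$ and $\sum r_i=0$. Then, using \eqref{eqn rc owedge g}, I would compute the block $B$ in the basis of \cite[Section 2.2]{CX23MathZ}: with $\varphi^\pm$ built from $e_1\wedge e_j\pm\star(e_1\wedge e_j)$, the map $B$ becomes diagonal. Its diagonal entries should be $\pm c\,(r_1+r_j)$ for $j=2,3,4$, with a normalizing constant $c$. The consistency check $|\mathring{\Rc}|=2|B|$ from \eqref{eqn norm Rc0} fixes $c$: indeed $\sum_{j\ge2}(r_1+r_j)^2=\sum_i r_i^2$ because $\sum_j r_j=-r_1$, which suggests $c=\frac12$. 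Hence the singular values of $B$ are the numbers $\frac12|r_1+r_j|$.

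\textbf{Step 3 (conclusion).} Summing over $j$ gives $\sum_{j\ge2}(r_1+r_j)=2r_1$, so
\[
|r_1|\le\tfrac12\sum_{j}|r_1+r_j|=B_1+B_2+B_3\le\tfrac32(B_2+B_3)\le\tfrac{\sqrt\eta}{4}\mathcal{R}.
\]
Therefore $\Ric\ge\frac{\mathcal{R}}4-|r_1|\ge(1-\sqrt{\eta})\frac{\mathcal R}{4}$, which is at least the stated bound $(1-\frac43\sqrt\eta)\frac{\mathcal{R}}4$.

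The obstacle I expect is Step 2: pinning down the exact normalization constant in the $B$–$\mathring{\Rc}$ dictionary. Since the claimed constant $\frac43$ leaves room, even a cruder estimate suffices. One option is $|r_1|\le\frac{\sqrt3}{2}|\mathring{\Rc}|=\sqrt3|B|$, combined with $|B|^2\le\frac32(B_2+B_3)^2$. If the computed $c$ is off by a factor, I would instead use $|r_1|\le\frac43\cdot\frac{3}{2}c^{-1}\cdots$ and simply track constants to land at $\frac43\sqrt{\eta}$.
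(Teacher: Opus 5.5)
Your proof is correct, but at the key step it takes a different route from the paper.

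The paper works only with norms. It chains $|\mathring{\Rc}|^2=4|B|^2\le4(B_2+B_3)^2\le4\eta(A_1+A_2)(C_1+C_2)\le\frac{\eta}{9}\mathcal{R}^2$ using \eqref{eqn norm Rc0}, and then applies the crude bound $\mathring{\Rc}\ge-|\mathring{\Rc}|g$.

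You instead set up an eigenvalue-level dictionary, and it is right. The operator $\frac12\mathring{\Rc}\owedge g$ is diagonal on $\{e_i\wedge e_k\}_{i<k}$ with entries $\frac12(r_i+r_k)$. The complementary pair satisfies $r_k+r_l=-(r_1+r_j)$. Hence the operator maps $\frac{1}{\sqrt2}(e_1\wedge e_j+\star(e_1\wedge e_j))$ to $\frac12(r_1+r_j)\cdot\frac{1}{\sqrt2}(e_1\wedge e_j-\star(e_1\wedge e_j))$. So $c=\frac12$ follows from this direct computation, not merely from the norm consistency check.

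Combined with $2r_1=\sum_{j\ge2}(r_1+r_j)$, this gives $|r_1|\le B_1+B_2+B_3\le\frac32(B_2+B_3)\le\frac{\sqrt{\eta}}{4}\mathcal{R}$. Hence $\Ric\ge(1-\sqrt{\eta})\frac{\mathcal{R}}{4}g$, which is stronger than the stated bound. This constant is optimal under the cone conditions. The operator $\Rm=\frac{\mathcal{R}}{12}\mathcal{I}+\frac12\mathring{\Rc}\owedge g$ with $\mathring{\Rc}=\mathrm{diag}(-3b,b,b,b)$ and $b=\frac{\sqrt{\eta}}{12}\mathcal{R}$ lies in $\mathfrak{C}_{\eta,\mu}$ and attains it.

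The paper's argument is shorter and reuses an identity already established in Section 2. Your frame computation buys the sharp constant, and with it strict Ricci pinching for every $\eta<1$ rather than only for $\eta<\frac{9}{16}$.

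One small remark on your fallback. The estimate $|B|^2\le\frac32(B_2+B_3)^2$ would only give $1-\sqrt{2}\sqrt{\eta}$, which is weaker than the claim. The correct crude estimate is $B_1^2+B_2^2+B_3^2\le(B_2+B_3)^2$, since $B_1^2\le 2B_2B_3$. Together with $|r_1|\le|\mathring{\Rc}|$, that is exactly the paper's proof. Your main line does not need the fallback anyway.
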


\begin{proof}
    Choosing a local orientation and recalling \eqref{eqn norm Rc0}, we have
    \begin{align*}
        |\mathring{\Rc}|^2&=4|B|^2=4(B_1^2+B_2^2+B_3^2) \\
        &\le4(B_2+B_3)^2\le4\eta(A_1+A_2)(C_1+C_2) \\
        &\le\frac{16}{9}\eta\tr(A)\tr(C)=\frac{\eta}{9}\mathcal{R}^2.
    \end{align*}
    It follows that
    \[
        \Ric=\mathring{\Rc}+\frac{\mathcal{R}}{4}g
                \ge(1-\frac{4}{3}\sqrt{\eta})\frac{\mathcal{R}}{4}g\ge0.
    \]
    This completes the proof.
\end{proof}

\begin{remark}
    What we really need in the proof of Lemma \ref{lma Ricci pinched} are the
            first condition in the definition of $\mathfrak{C}_{\eta,\mu}$
            and an additional non-negativity assumption.
    More precisely, if $B=0$, it is enough to assume $\mathcal{R}\ge0$,
            otherwise we need $A_1+A_2\ge0$ or $C_1+C_2\ge0$, which is called
            half nonnegative isotropic curvature in \cite{CX23MathZ, CX25JMPA}.
    Under the first condition and the assumption $B\neq0$, these two
            alternatives are in fact equivalent, so half nonnegative isotropic curvature is actually nonnegative isotropic curvature in this case.
\end{remark}

\dummyline{1}

Combining this with Theorem \ref{thm soliton}, we obtain the following geometric gap theorem.

\begin{corollary} \label{cor flat}
    Suppose $(M,g_0)$ is a $4$-dimensional  complete noncompact Riemannian manifold with maximal volume growth and $\Rm_{g_0}\in\mathfrak{C}_{\eta,\mu}$ for some
            $\eta\in[0,\frac{9}{16})$ and $\mu\ge\eta+1$ with $\mu>1$.
    Then $(M,g_0)$ is isometric to flat Euclidean space.
\end{corollary}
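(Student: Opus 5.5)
We plan to run the Ricci flow out of $g_0$ and blow down at the level of the flow. This produces a flow coming out of a metric cone, and we will show that flow is a Ricci pinched expander, hence flat.

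\textbf{Step 1: an immortal flow.} As in the proof of Theorem \ref{thm diff. R4}, we may assume $\eta>0$, since $\mathfrak{C}_{0,\mu}\subset\mathfrak{C}_{\varepsilon,\mu}$ and we may take $\varepsilon<\frac{9}{16}$. Maximal volume growth gives $\VolB_{R^{-2}g_0}(x,1)\ge v_0$ for all $R>0$ and all $x\in M$. Applying Corollary \ref{cor global exist.0} to $R^{-2}g_0$, rescaling back, and letting $R\to\infty$ as in that proof yields a complete immortal flow $g(t)$ with $g(0)=g_0$ and $|\Rm|_{g(t)}\le D_1/t$. Corollary \ref{cor preseve.} gives $\Rm_{g(t)}\in\mathfrak{C}_{\eta,\mu}$ for all $t$. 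Since $\Ric\ge0$ by Lemma \ref{lma 2-non flag}, Bishop--Gromov and the monotonicity of $\AVR$ along the flow (e.g.\ as in \cite{CLP24arXiv}, using $\Ric\ge0$ and the curvature decay) give $\AVR_{g(t)}=\AVR_{g_0}>0$.

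\textbf{Step 2: blow down to a cone.} For $\lambda_i\to\infty$, set $g_i(t):=\lambda_i^{-1}g(\lambda_it)$. These flows satisfy (a), (b), (c) of Theorem \ref{thm soliton} uniformly, with $\alpha=D_1$ and $v_0=\AVR_{g_0}$. By Cheeger--Gromov--Taylor and Hamilton's compactness theorem, a subsequence converges to a limit flow $(M_\infty,g_\infty(t))$ on $(0,\infty)$ with the same properties. The $t=0$ slice $d_0$ of the limit is the Gromov--Hausdorff limit of $(M,\lambda_i^{-1}d_{g_0},p)$. By Cheeger--Colding, and in view of the distance distortion estimate \cite[Proposition 2.2]{CLP24arXiv}, this is a tangent cone at infinity of $(M,g_0)$, hence a metric cone. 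Theorem \ref{thm soliton} then makes $g_\infty$ an expanding gradient soliton.

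\textbf{Step 3: expander rigidity.} This is the main obstacle. Lemma \ref{lma Ricci pinched} gives $\Ric\ge\varepsilon\mathcal{R}g$ with $\varepsilon=\frac{1}{4}(1-\frac{4}{3}\sqrt{\eta})>0$. I would show that a Ricci pinched expander with bounded curvature $\le\alpha/t$ and positive $\AVR$ is flat. The argument I would try first uses the soliton identities $\mathcal{R}+|\nabla u|^2=u$ and $\Delta u=2+t\mathcal{R}$, at $t=1$ up to normalisation. The pinching makes $\nabla^2u\ge\frac{1}{2}g$ uniformly, so $u$ is a proper, uniformly convex function whose levels grow quadratically. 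Along gradient lines, $\mathcal{R}$ satisfies $\langle\nabla\mathcal{R},\nabla u\rangle=-2\Ric(\nabla u,\nabla u)\le-2\varepsilon\mathcal{R}|\nabla u|^2$. Hence $\mathcal{R}$ decays like $e^{-cu}$, in fact faster than any power of $r$. Combined with $\mathcal{R}\ge0$ and the elliptic equation $\Delta\mathcal{R}+\langle\nabla u,\nabla\mathcal{R}\rangle+\mathcal{R}+2|\Ric|^2=0$, whose sign conventions I would adjust as needed, a maximum principle argument forces $\mathcal{R}\equiv0$. Alternatively, one can invoke the known result that Ricci pinched expanders with bounded curvature (Deruelle--Schulze--Simon type results; Chan--Lee--Peachey) are flat. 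With $\mathcal{R}\equiv0$ and $\Ric\ge0$, the limit is Ricci flat, so $B=0$. Since $\tr A=0$ and $A_1+A_2\ge0$, we get $A=0$; likewise $C=0$, so the limit is flat.

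\textbf{Step 4: conclusion.} The blow-down limit is flat with a cone as initial data, so the tangent cone at infinity is $\mathbb{R}^4$ and $\AVR_{g_0}=\omega_4$. Bishop--Gromov rigidity with $\Ric_{g_0}\ge0$ then shows $(M,g_0)$ is isometric to $\mathbb{R}^4$.
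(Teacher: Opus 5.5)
Your Steps 1, 2 and 4 follow the paper's proof closely: immortal flows from rescalings of $g_0$, a limit flow coming out of a tangent cone at infinity, Theorem \ref{thm soliton} to obtain an expander, preservation of $\AVR$, and Bishop--Gromov rigidity at the end. Your bound $\mathcal{R}\le Ce^{-2\varepsilon u}$, obtained from $\langle\nabla\mathcal{R},\nabla u\rangle=-2\Ric(\nabla u,\nabla u)$, is exactly the exponential decay the paper takes from the proof of \cite[Proposition 1.8]{Deruelle17ASNSP}.

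The gap is the step from this decay to flatness. Your equation says $\Delta\mathcal{R}+\langle\nabla u,\nabla\mathcal{R}\rangle=-\mathcal{R}-2|\Ric|^2\le0$. So $\mathcal{R}$ is a nonnegative supersolution of the drift Laplacian that tends to $0$ at infinity, which is fully compatible with the maximum principle. Indeed, the non-flat expanders coming out of cones over round $S^3(r)$, $r<1$, satisfy the same equation with $\mathcal{R}>0$. A pointwise argument does not see the decay rate either: for $w=e^{au}\mathcal{R}$ with $0<a<2\varepsilon$, pinching only forces a positive maximum of $w$ to lie at the critical point of $u$, and there $\Delta w\le0$ holds with no contradiction. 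Your fallback citations do not apply. Deruelle--Schulze--Simon is three-dimensional, where $\Ric\ge0$ controls the whole curvature tensor. Chan--Lee--Peachey assume weakly PIC1, which the paper notes is not comparable with $\mathfrak{C}_{\eta,\mu}$.

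Two ingredients are missing. First, in dimension $4$, $\Ric\ge0$ does not control the Weyl part, so decay of $\mathcal{R}$ alone does not give decay of $|\Rm|$. The paper uses $|\Rm|\le C\mathcal{R}$, which comes from $2$-nonnegative flag curvature (Lemma \ref{lma 2-non flag} and \cite[Lemma A.2]{LT25GT}). You can also read it off the cone directly: $A_2\ge0$, $A_3\le\mu(A_1+A_2)\le\frac{\mu}{6}\mathcal{R}$ and $A_1\ge\frac{\mathcal{R}}{4}-2A_3$; the same holds for $C$, and $B_3\le\sqrt{\eta(A_1+A_2)(C_1+C_2)}$. This upgrades your estimate to faster-than-quadratic decay of $|\Rm|_{g_\infty(1)}$.

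Second, you need a global gap theorem. Faster-than-quadratic decay with $\Ric\ge0$ and Euclidean volume growth only gives a flat cone $\mathbb{R}^4/\Gamma$ at infinity; Eguchi--Hanson shows $\Gamma$ can be nontrivial. The expander structure rules this out: $u$ is proper and strictly convex with a unique critical point, so $M_\infty\cong\mathbb{R}^4$ and $\Gamma$ must be trivial. This is what the paper extracts from \cite[Theorem 2]{GPZ94IMRN} and \cite[Proposition 2.4]{Deruelle17ASNSP} to conclude $\AVR_{g_\infty(1)}=1$, hence $\AVR_{g_0}=1$. With these two ingredients replacing your maximum principle, the rest of your argument goes through.
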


\begin{proof}
    The proof follows \cite[Corollary 6.3]{CLP24arXiv} and we sketch main steps here. By Lemma \ref{lma 2-non flag}, we have $\Ric_{g_0}\ge0$.
    It follows from \cite[Theorem 5.3]{Gromov99Book} and
            \cite[Theorem 7.6]{CC96AnnMath} that $(M,R_i^{-2}g_0,x_0)$ converges
            to some metric cone $C(X)$ in the pointed Gromov-Hausdorff sense
            for some $x_0\in M$ as $R_i\to+\infty$.
    Since the assumption is scaling invariant, by the proof of Theorem
            \ref{thm diff. R4}, we can construct a sequence of complete
            long-time Ricci flow $g_i(t)$ with $g_i(0)=R_i^{-2}g_0$
            and $\Rm_{g_i(t)}\in\mathfrak{C}_{\eta,\mu}$,
            which implies Ricci pinched by Lemma \ref{lma Ricci pinched}.
    By Hamilton's compactness theorem \cite{Hamilton95AJM} and
            \cite[Proposition 2.2]{CLP24arXiv}, we obtain a smooth manifold
            $M_\infty$ and a long-time Ricci flow $g_\infty(t)$ on $M_\infty$
            coming out of $C(X)$.
    Moreover, $\AVR_{g_\infty(t)}=\AVR_{g_i(t)}=\AVR_{g_0}$ for all $t>0$,
            see \cite[Theorem 0.1]{Colding97AnnMath} and the proof of
            \cite[Theorem 7]{Yokota08GeomDedi} and \cite[Theorem 1.2]{SS13MathZ}.
    By Theorem \ref{thm soliton}, $(M_\infty,g_\infty(1),\nabla u(1))$ is an
            expanding gradient Ricci soliton which is also Ricci pinched,
            and therefore the scalar curvature decays exponentially at spatial
            infinity, see the proof of \cite[Proposition 1.8]{Deruelle17ASNSP}.
    By  virtue of the properties of $2$-nonnegative flag curvature, we also
            have $|\Rm|\le C_n\mathcal{R}$ as in \cite[Lemma A.2]{LT25GT}.
    By \cite[Theorem 2]{GPZ94IMRN} and \cite[Proposition 2.4]{Deruelle17ASNSP}, see also \cite[Theorem 1.3 and Theorem 1.4]{CL25AdvMath},
            we conclude that $\AVR_{g_\infty(1)}=1$ and hence $\AVR_{g_0}=1$. The proof is completed by Bishop–Gromov volume rigidity theorem.
   \end{proof}

\dummyline{1}

\dummynewpage
\section{Existence of Ricci flow II and applications}

In this section, we study volume non-collapsed manifolds with  a lower bound with respect to $\mathfrak{C}_{\eta,\mu}$. We will first show the following key differential inequality for the lower bound $l$ with respect to $\mathfrak{C}_{\eta,\mu}$. 

\begin{lemma} \label{lma evol. l}
    For $\mu-1\ge\eta>0$ and any smooth $4$-dimensional Ricci flow $(M,g(t)),t\in[0,T)$
            which is possibly incomplete, let
    \begin{equation} \label{eqn def. l}
        l(p,t):=\inf\{\alpha\in[0,\infty)\mid\Rm(p,t)
                +\alpha\mathcal{I}_{g(t)}\in\mathfrak{C}_{\eta,\mu}\}.
    \end{equation}
    Then there exists a constant $C_\eta>0$ depending only on $\eta$ such that for any $(p,t)\in M\times(0,T)$, we have
    \begin{equation} \label{eqn l bound}
        l(p,t)\le C_\eta|\Rm(p,t)|,
    \end{equation}
    and
    \begin{equation} \label{eqn box l}
        \Boxg{g(t)}l\le\mathcal{R}_{g(t)}l+6l^2,
    \end{equation}
    in the following barrier sense:
    for any $(p,t)\in M\times(0,T)$, we can find a neighborhood $\mathcal{U}
            \subset M\times(0,T)$ of $(p,t)$ and a smooth (lower barrier)
            function $\varphi:\mathcal{U}\to\mathbb{R}$ such that $\varphi\le l$
            in $\mathcal{U}$, with equality at $(p,t)$, and
    \[
        \Boxg{g(t)}\varphi\le\mathcal{R}_{g(t)}l+6l^2\quad\text{at }(p,t).
    \]
\end{lemma}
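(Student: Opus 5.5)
The plan is to reuse the maximum principle computation from the proof of Theorem \ref{thm t power k}, specialised to $\Phi\equiv1$. For a fixed $(p,t)$ we set $\Lambda:=\Rm+l\,\mathcal{I}$, so that $\Lambda(p,t)\in\partial\mathfrak{C}_{\eta,\mu}$ whenever $l(p,t)>0$. The case $l(p,t)=0$ is handled directly: $\varphi\equiv0$ is a lower barrier, since $l\ge0$.

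For the bound \eqref{eqn l bound}, fix $\alpha=l(p,t)>0$; by continuity $\Rm+\alpha\mathcal{I}\in\partial\mathfrak{C}_{\eta,\mu}$. One of $\hat F^1,\hat F^2,\hat F^3$ vanishes for $\Lambda$, and we repeat the argument proving \eqref{eqn varphi over Phi} with $\Phi=1$ and $\varphi=\alpha$.
\begin{itemize}
\item If $\hat F^2=0$, then $\alpha=-\frac{\mu\x-\w}{2(\mu-1)}\le\frac{\mu+1}{\mu-1}|\Rm|\le(1+\frac2\eta)|\Rm|$.
\item If $\hat F^1=0$, then $\sqrt\eta X\le Z$ (or the same with $Y$), which gives $\alpha\le(1+\eta^{-1/2})|\Rm|$.
\end{itemize}
The degenerate situation where one of $X,Y$ vanishes forces $\Lambda=0$ by Lemma \ref{lma null vector} (i), and then $\hat F^2=0$ again. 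This yields $C_\eta$.

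For the barrier, choose $\xi\in\Theta_{p,t}(M)$ realising the vanishing infimum, extend it by parallel transport with $\nabla_t\tilde\xi=0$, and form the smooth functions $\x,\y,\z,\w$ from $\Rm$ as before. In case $2^\circ$ define
\[
\varphi:=-\frac{\mu\x-\w}{2(\mu-1)}.
\]
Since $\mu X^{\Rm+\beta\mathcal I}-W^{\Rm+\beta\mathcal I}=\mu\x-\w+2(\mu-1)\beta$ along $\tilde\xi$, membership in the cone at a nearby point requires $l\ge\varphi$ there, with equality at $(p,t)$. In case $1^\circ$ define $\varphi$ as the larger root $\beta$ of
\[
\eta(\x+2\beta)(\y+2\beta)=\z^2.
\]
This is smooth near $(p,t)$ because there $X,Y>0$, so the discriminant is positive. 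The same reasoning gives $\varphi\le l$ with equality at $(p,t)$: any admissible $\beta$ must have $\eta XY\ge Z^2$ with $X,Y\ge0$, i.e.\ $\beta\ge$ that root.

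Next we compute $\Box\varphi$ at $(p,t)$ by differentiating the defining identity $G(\varphi):=\eta(\x+2\varphi)(\y+2\varphi)-\z^2=0$. This is exactly the computation in \eqref{eqn order 0}--\eqref{eqn order 2} with $\Phi\equiv1$, except that $\varphi$ now depends on space.

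\begin{itemize}
\item \emph{Gradient step.} The gradient identity expresses $2\eta(X+Y)\nabla\varphi$ in terms of $\nabla\x,\nabla\y,\nabla\z$.
\item \emph{Time/Laplacian step.} Differentiating once more produces a term $2\eta(X+Y)\Box\varphi$ plus the $Q(\Rm)$ terms from \eqref{eqn evol. Rm}, plus quadratic gradient terms. The gradient terms are
\[
-2\eta\langle\nabla X,\nabla Y\rangle+2|\nabla Z|^2+(\text{terms from }\nabla\varphi),
\]
and they must have a good sign. I expect this to be the main obstacle. The plan is to use concavity of the larger root in $(\x,\y,\z)$: $\varphi$ is the largest $\beta$ with $\sqrt{\eta(\x+2\beta)(\y+2\beta)}\ge|\z|$, and $\sqrt{XY}$ is concave, so $-\varphi$ is a supremum-type expression. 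This should make the Hessian contribution satisfy $\Delta\varphi\ge(\text{linear part})$. Equivalently, completing the square as in the step with $\frac{\eta|Y\nabla X-X\nabla Y|^2}{2XY}$, the quadratic gradient terms contribute with the sign that gives $\Box\varphi\le$ (linear combination of $\Box\x,\Box\y,\Box\z$).
\item \emph{Closing via the null-vector condition.} With $\Box\x=2X^{Q(\Rm)}_\xi$ etc., one obtains
\[
2\eta(X+Y)\Box\varphi\le-2\bigl(\eta Y X^{Q(\Rm)}_\xi+\eta XY^{Q(\Rm)}_\xi-2ZZ^{Q(\Rm)}_\xi\bigr).
\]
Apply Lemma \ref{lma null vector} (ii) to $\Lambda$, together with the expansion of $Q(\Lambda)$ computed in the proof of Theorem \ref{thm t power k} (namely \eqref{eqn box comp.1} with $\Phi=1$). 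This bounds the right side by $2\eta(X+Y)(\mathcal R+6l)l$, hence $\Box\varphi\le\mathcal R l+6l^2$.
\end{itemize}
Case $2^\circ$ is linear, so no gradient terms arise; \eqref{eqn box comp.2} gives the same bound directly.
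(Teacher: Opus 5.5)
Your proposal is correct and follows the paper's proof. It uses the same case split, the same barriers (the larger root of $\eta(\x+2\beta)(\y+2\beta)=\z^2$ in case $1^\circ$, and $-\frac{\mu\x-\w}{2(\mu-1)}$ in case $2^\circ$), and the same closing step via \eqref{eqn box comp.1} and \eqref{eqn box comp.2} with $\Phi=1$. The only difference is in computing $\Box\varphi$. The paper differentiates the closed form $\varphi=\frac{-(\x+\y)+\sqrt{(\x-\y)^2+4\z^2/\eta}}{4}$ and removes the gradient terms by Cauchy--Schwarz. You instead differentiate the defining identity implicitly, and your completed-square route works as written: since $\nabla(\eta XY-Z^2)=0$, you get $2|\nabla Z|^2-2\eta\langle\nabla X,\nabla Y\rangle\ge\frac{\eta|Y\nabla X-X\nabla Y|^2}{2XY}\ge0$.

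One correction to your heuristic: the relevant structure is \emph{convexity}, not concavity, of the larger root in $(\x,\y,\z)$. It equals $-\frac{\x+\y}{4}$ plus one quarter of the Euclidean norm of $(\x-\y,2\z/\sqrt{\eta})$, and that convexity is exactly why $\Box\varphi$ is bounded above by its linearization.
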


\dummyline{0.5}

\begin{proof}
    For any $(p,t)\in M\times(0,T)$, let
    \[
        \Lambda(p,t):=\Rm_{g(t)}(p)+l(p,t)\mathcal{I}_{g(t)}
                \in\mathfrak{C}_{\eta,\mu}.
    \]
    We may assume $\Lambda(p,t)\in\partial\mathfrak{C}_{\eta,\mu}$, otherwise
            we have $\Lambda(p,t)\in\interior(\mathfrak{C}_{\eta,\mu})$,
            then $l\equiv0$ in a neighborhood of $(p,t)$ and the results hold
            trivially.
    Assuming $\Lambda(p,t)\in\partial\mathfrak{C}_{\eta,\mu}$, we can obtain
            \eqref{eqn l bound} by the same argument as in the proof of
            \eqref{eqn varphi over Phi} with $\Phi=1$.
    It remains to show \eqref{eqn box l}.
    After choosing a local orientation, we have one of the following conditions
            holds at $(p,t)$:
    \begin{enumerate}
        \item[\textup{$1^\circ$}] $\hat{F}^1(p,t)=0,
                \quad\hat{F}^2(p,t)\ge0,\quad\hat{F}^3(p,t)\ge0$;
        \item[\textup{$2^\circ$}] $\hat{F}^1(p,t)\ge0,
                \quad\hat{F}^2(p,t)=0,\quad\hat{F}^3(p,t)\ge0$;
        \item[\textup{$3^\circ$}] $\hat{F}^1(p,t)\ge0,
                \quad\hat{F}^2(p,t)\ge0,\quad\hat{F}^3(p,t)=0$.
    \end{enumerate}

    \dummyline{0.5}

    For the first case, there exists $\xi\in\Theta_{p,t}(M)$ such that
    \[
        \eta X^{\Lambda}(\xi)Y^{\Lambda}(\xi)-Z^{\Lambda}(\xi)^2=0.
    \]

    Extend $\xi$ to a local section on a neighborhood
            $\mathcal{U}$ of $(p,t)$ and define the associated \emph{functions}.
    Denote
    \[\left\{\;\begin{aligned}
        X&:=X_{\xi}^{\Lambda}=X_{\xi}^{\Rm}+2l=:\x+2l; \\
        Y&:=Y_{\xi}^{\Lambda}=Y_{\xi}^{\Rm}+2l=:\y+2l; \\
        Z&:=Z_{\xi}^{\Lambda}=Z_{\xi}^{\Rm}=:\z.
    \end{aligned}\right.\]
    Then it holds at $(p,t)$ that
    \[
        0=XY-Z^2/\eta=4l^2+2(\x+\y)l+\x\y-\z^2/\eta.
    \]

    With the same arguments in the proof of Theorem \ref{thm t power k},
            we may assume $X,Y,Z>0$ at $(p,t)$.
    Then $l$ must be the bigger root of the above quadratic equation,
            otherwise it holds $l\le-\frac{\x+\y}{4}$, which contradicts
            $X+Y>0$.
    For any $(q,\tau)\in\mathcal{U}$, let
    \[
        \varphi(q,\tau):=\frac{-(\x+\y)+\sqrt{(\x-\y)^2+4\z^2/\eta}}{4}
                \text{ and }f:=(\x-\y)^2+4\z^2/\eta.
    \]
    Clearly, we have $\varphi=l$ and $\sqrt{f}=X+Y>0$ at $(p,t)$.
    We claim $\varphi$ is a lower barrier for $l$ in $\mathcal{U}$
            by shrinking $\mathcal{U}$ if necessary.
    Indeed, for any $(q,\tau)\in\mathcal{U}$, we have at $(q,\tau)$,
    \begin{align*}
        (\x+2l)(\y+2l)-\z^2/\eta\ge0=(\x+2\varphi)(\y+2\varphi)-\z^2/\eta,
    \end{align*}
    which implies that $l\ge\varphi$.

    By using \eqref{eqn box comp.1} with $\Phi=1$ and \eqref{eqn evol. Rm},
            we obtain at $(p,t)$,
    \begin{equation} \label{eqn z box z}
        \z\Box\z/\eta\le\frac{1}{2}(Y\Box\x+X\Box\y)+(X+Y)(\mathcal{R}+6l)l.
    \end{equation}

    Now we compute at $(p,t)$,
    \begin{align*}
        \Boxg{g(t)}\varphi&=-\frac{\Box\x+\Box\y}{4}+\frac{\Box f}{8\sqrt{f}}
                +\frac{|\nabla f|^2}{16f\sqrt{f}} \\
        &=-\frac{\Box\x+\Box\y}{4}+\frac{(\x-\y)(\Box\x-\Box\y)}{4\sqrt{f}}
                +\frac{4\z\Box\z/\eta}{4\sqrt{f}} \\
                &\qquad-\frac{|\nabla(\x-\y)|^2+4|\nabla\z|^2/\eta}{4\sqrt{f}}
                \cdot\frac{(\x-\y)^2+4\z^2/\eta}{f} \\
                &\qquad+\frac{|(\x-\y)\nabla(\x-\y)+4\z\nabla\z/\eta|^2}
                {4f\sqrt{f}} \\
        &\le-\frac{\Box\x+\Box\y}{4}+\frac{(X-Y)(\Box\x-\Box\y)}{4(X+Y)} \\
                &\qquad+\frac{Y\Box\x+X\Box\y}{2(X+Y)}+(R+6l)l \\
        &=Rl+6l^2,
    \end{align*}
    where we have used $\sqrt{f}=X+Y$ at $(p,t)$, \eqref{eqn z box z} and
            Cauchy-Schwarz inequality in the inequality step.

    The cases $2^\circ$ and $3^\circ$ are  simpler and we only sketch
            the proof of the case $2^\circ$ here.
    Suppose $\xi\in\Theta_{p,t}(M)$ satisfies $\mu X^\Lambda(\xi)
            -W^\Lambda(\xi)=0$ and extend it to a local section on a
            neighborhood $\mathcal{U}$ of $(p,t)$.
    Denote
    \[\left\{\;\begin{aligned}
        X&:=X_{\xi}^{\Lambda}=X_{\xi}^{\Rm}+2l=:\x+2l; \\
        W&:=W_{\xi}^{\Lambda}=W_{\xi}^{\Rm}+2l=:\w+2l.
    \end{aligned}\right.\]

    For any $(q,\tau)\in\mathcal{U}$, let
    \[
        \varphi(q,\tau):=-\frac{(\mu\x-\w)}{2(\mu-1)}.
    \]
    Clearly, $\varphi$ is a lower barrier for $l$ in $\mathcal{U}$ with
            $\varphi=l$ at $(p,t)$. By \eqref{eqn box comp.2} and
            \eqref{eqn evol. Rm}, we have at $(p,t)$,
    \[
        \Boxg{g(t)}\varphi=-\frac{\mu\Box\x-\Box\w}{2(\mu-1)}
                \le\mathcal{R}l+6l^2.
    \]
    This completes the proof.
\end{proof}

\dummyline{1}

With Lemma \ref{lma evol. l}, following the argument in \cite{BCW19Inv}, we obtain the following pseudo-locality theorem for $\mathfrak{C}_{\eta,\mu}$. 

\begin{theorem} \label{thm bcw thm1}
    For any $v_0>0$, $\alpha_0\in[0,1]$, $\eta\in(0,1]$ and $\mu\ge\eta+1$,
            there exist $\hat{T}(\mu,v_0)>0$ and $D_1(\mu,v_0)>0$ such that the
            following holds.
    Let $(M,g(t)),t\in[0,T)$ be a smooth $4$-dimensional complete Ricci flow
            with \emph{bounded curvature} satisfying
    \[
        \VolB_{g(0)}(p,1)\ge v_0\text{ for all }p\in M\text{ and }
                \Rm_{g(0)}+\alpha_0\mathcal{I}_{g(0)}\in\mathfrak{C}_{\eta,\mu}.
    \]
    Then for all $t\in(0,T\wedge\hat{T}]$, we have
    \[
        \Rm_{g(t)}+D_1\alpha_0\mathcal{I}_{g(t)}\in\mathfrak{C}_{\eta,\mu}
                \text{ and }\sup\limits_{M}|\Rm|_{g(t)}\le\frac{D_1}{t}.
    \]
\end{theorem}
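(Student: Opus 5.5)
The proof follows the scheme of \cite[Theorem 1]{BCW19Inv}: a continuity argument in which the curvature bound $|\Rm|\le D_1/t$ and the lower bound $l\le D_1\alpha_0$ are bootstrapped together. Here $l$ is the function defined in \eqref{eqn def. l}. By parabolic rescaling we may work at unit scale.

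Let $\hat T$ and $D_1$ be constants to be fixed, and let $t^*\in(0,T\wedge\hat T]$ be the supremum of times $\tau$ such that
\[
|\Rm|_{g(t)}\le D_1/t\quad\text{and}\quad l\le 2D_0\alpha_0 \quad\text{for all }t\in(0,\tau],
\]
where $D_0$ is an intermediate constant. Bounded curvature gives $t^*>0$ in the case $\alpha_0>0$. The case $\alpha_0=0$ is simpler: Corollary \ref{cor preseve.} gives preservation of the cone directly, and only the curvature bound remains to be proved, by the same argument.

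\textbf{Step 1: improving the bound on $l$.} We use Lemma \ref{lma evol. l}. On $(0,t^*]$ we have $\mathcal{R}\le C|\Rm|\le CD_1/t$ and $l\le 2D_0\alpha_0\le 2D_0$. Hence, in the barrier sense,
\[
\Box l\le \Bigl(\frac{CD_1}{t}+12D_0\Bigr)l.
\]
A bare $D_1/t$ potential is not integrable, so the maximum principle cannot be applied naively. Following \cite{BCW19Inv}, we instead use the heat kernel of the flow. Write
\[
l(x,t)\le\int_M K(x,t;y,0)\,l(y,0)\,dy+\int_0^t\!\!\int_M K\,(\mathcal{R}+6l)\,l .
\]
Then use Gaussian upper bounds for $K$, valid under $|\Rm|\le D_1/t$ and the noncollapsing; the volume lower bound propagates by Lemma \ref{fact lower vol. ctrl.}, using $\Ric\ge -C\alpha_0$ from Lemma \ref{lma 2-non flag}. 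Alternatively, one can use the integral bound $\int_0^t\sup\mathcal{R}\,ds$ on balls, as done in \cite{BCW19Inv}. Either route should give $l(\cdot,t)\le D_0\alpha_0$ for $t\le\hat T$, with $\hat T$ small depending on $D_1$ and $v_0$. This strictly improves the bootstrap bound.

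\textbf{Step 2: improving the curvature bound.} For $t\le t^*$ we have
\[
\Rm_{g(t)}+D_0\alpha_0\mathcal{I}\in\mathfrak{C}_{\eta,\mu}\quad\text{with } D_0\alpha_0\le D_0 .
\]
Apply Lemma \ref{lma pseudo.} at every point with $r=1$ and $K=D_0$, using the initial volume bound. This gives $|\Rm|\le C_0(\mu,v_0,D_0)/t$ for $t\le\bar T$. Choose $D_1=2C_0$; the curvature bound then also improves strictly. Since both bootstrap conditions improve strictly, continuity forces $t^*=T\wedge\hat T$. The conclusion follows with $D_1$ replaced by $\max\{D_1,D_0\}$.

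\textbf{Main obstacle.} The difficulty is the circularity between Steps 1 and 2, together with the integral estimate in Step 1. The constant $D_0$ must not depend on $D_1$, which itself depends on $D_0$ through $C_0$. To break the loop, we fix $D_0$ first (say $D_0=2$, from an estimate of the form $l\le\alpha_0\exp(C\int\ldots)$ with the exponent at most $\log 2$). We then take $D_1=2C_0(\mu,v_0,D_0)$, and finally shrink $\hat T$ depending on $D_1$ so that the exponent in Step 1 is small. The $CD_1/t$ term is the delicate part, because its time integral diverges logarithmically. I would first try the BCW device: bound $\int_0^t\int K\,\mathcal{R}$ using $\Ric\ge -C$ and the distance-distortion or volume estimate, which yields $\int K\,\mathcal{R}\,dy\lesssim t^{-1/2}$, an integrable quantity.
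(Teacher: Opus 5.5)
Your overall architecture (a bootstrap, Step 2 via Lemma \ref{lma pseudo.} with $r=1$, the volume bound via Lemma \ref{fact lower vol. ctrl.}) matches the paper's sketch. However, Step 1, which is the real content of the theorem, does not close as you propose.

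\textbf{The $\mathcal{R}l$ term cannot be made small.} Duhamel's formula with the ordinary heat kernel $K$ of $\partial_t-\Delta_{g(t)}$ leaves the term $\int_0^t\!\int_M K\,\mathcal{R}\,l$. To absorb it you need $\sup_x\int_0^t\int_M K(x,t;y,s)\mathcal{R}(y,s)\,dy\,ds$ to be small. Under the only curvature control you have, $|\Rm|\le D_1/t$, this quantity is invariant under parabolic rescaling, so shrinking $\hat T$ cannot make it small. It is genuinely of order one.

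For instance, take $\eta=1$ and the $O(4)$-invariant expanders coming out of $C(S^3_r)$ with $r<1$. They have $A=C=\frac{k_1+k_2}{2}I$ and $B=\frac{k_2-k_1}{2}I$, where $k_1,k_2\ge0$ are the radial and tangential sectional curvatures, so they lie in $\mathfrak{C}_{1,\mu}$. They are non-collapsed and, after a time shift, have bounded curvature. At the tip $x$, one has $\mathcal{R}(y,s)\approx c\,d(x,y)^{-2}$ for $\sqrt{s}\ll d\lesssim\sqrt t$, and $K\approx t^{-2}$ there. This gives $\int_M K\mathcal{R}\,dy\approx c/t$ for $s<t/2$, not $O(s^{-1/2})$.

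The $\sqrt t$ you have in mind comes from distance distortion, where $\int_0^t\sqrt{D_1/s}\,ds$ controls $\Ric$ along a minimizing geodesic. It says nothing about the heat-kernel average of $\mathcal{R}$.

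Consequently, a bound $l\le\alpha_0\exp(\cdot)$ with exponent at most $\log 2$ is unavailable. So is your way of breaking the circularity. The best one can get is $l\le C(D_1,v_0)\alpha_0$ with $C$ not close to $1$. With your bootstrap hypothesis $l\le 2D_0\alpha_0$, the constant $D_1=2C_0(\mu,v_0,2D_0)$ feeds back into $C$.

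\textbf{The missing idea.} Let the kernel absorb $\mathcal{R}l$: use the fundamental solution $G$ of $\partial_t-\Delta-\mathcal{R}$. The paper's own use of Lai's generalized heat kernel in the proof of Theorem \ref{thm local exist.1}, with weight $\mathcal{L}=e^{-Ct}l$, points to this kernel, since that weight only removes the $6l^2$ term. Since $\partial_s\,d_sy=-\mathcal{R}\,d_sy$, the adjoint of this operator in $(y,s)$ is $-\partial_s-\Delta_y$. Hence Lemma \ref{lma evol. l} gives
\[
\frac{d}{ds}\int_M G(x,t;y,s)\,l(y,s)\,d_sy\le 6\int_M G(x,t;y,s)\,l(y,s)^2\,d_sy .
\]
In addition, $\int_M G(x,t;y,s)\,d_tx=1$. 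Together with $|\Rm|\le D_1/t$, $\Ric\ge-3$ and non-collapsing, this yields Gaussian upper bounds. Hence $\int_M G(x,t;y,s)\,d_sy\le C(D_1,v_0)$, a bounded factor, not one close to $1$.

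\textbf{Breaking the loop.} Do it as the paper does: bootstrap on an absolute threshold, namely $l\le1$ together with a unit-ball volume bound. Then $D_1=D_1(\mu,v_0)$ comes from Lemma \ref{lma pseudo.} with $K=1$, and the kernel step gives $l\le D_2(\mu,v_0)\alpha_0$. Beforehand, reduce to $\alpha_0\le\varepsilon_0:=1/(2D_2)$ by parabolic rescaling; Bishop--Gromov with $\Ric\ge-3\alpha_0$ keeps a unit-ball volume bound depending only on $v_0$. Then $l\le 1/2$ strictly improves the threshold.
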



\dummyline{0.5}

\begin{proof}
    With \eqref{eqn box l} in Lemma \ref{lma evol. l} established, the proof
            follows that of \cite[Theorem 1]{BCW19Inv} except that
            we use Lemma \ref{lma ancient sol.} instead of
            \cite[Proposition 11.4]{Perelman02arXiv} to obtain the contradiction.
    We sketch the proof here for readers' convenience.

    Let $l(p,t)$ be defined as in \eqref{eqn def. l} and assume, by rescaling
            if necessary, that $l(\cdot,0)\le\alpha_0\le\varepsilon_0$
            for some $\varepsilon_0(\mu,v_0)>0$ to be chosen later.
    Consider the maximal time interval $[0,t_1)$  on which $l\le1$ and a uniform
            unit ball volume lower bound holds.
    The point-picking technique and blow-up argument give the curvature estimate
            $|\Rm|_{g(t)}\le D_1/t$ for some $D_1(\mu,v_0)>0$.
    Indeed, if this failed, a sequence of parabolic rescalings $\tilde{g}_i(t)$
            would converge, after passing to a subsequence, to a non-flat
            ancient solution $\tilde{g}_\infty(t)$ with bounded curvature
            and Euclidean volume growth.
    Moreover, $\Rm_{\tilde{g}_\infty(t)}\in\mathfrak{C}_{\eta_\infty,\mu}$
            for some $\eta_\infty\in[0,\min\{1,\mu-1\}]$, which contradicts
            Lemma \ref{lma ancient sol.}.
    Here we use similar arguments in the proof of Lemma \ref{lma pseudo.}
            to show that $D_1$ does not depend on $\eta$.

    With the curvature estimate, as well as the volume lower bound, the
            heat kernel argument from \cite{BCW19Inv} can be applied to show that
            $l\le D_2\alpha_0\leq D_2\varepsilon_0$ for some $D_2(\mu,v_0)>0$.
    A suitable choice of $\varepsilon_0$ ensures that $l\le1/2$ for all $t<t_1$.
    By \cite[Corollary 6.2]{Simon12}, the volume lower bound is almost preserved
            for some uniform time, which implies that $t_1$ has a uniform lower
            bound $\hat{T}(\mu,v_0)>0$. Finally, let $D_1$ be the maximum of $D_1$ and $D_2$ above. 
    This completes the proof.
\end{proof}

\dummyline{1}

\begin{corollary} \label{cor BCW1}
    Let $\eta,\mu$ be as in Theorem \ref{thm bcw thm1}.
    For any $v_0,D>0$, there exists $\varepsilon(\eta,\mu,v_0,D)>0$
            such that the following holds.
    Let $(M,g)$ be a compact $4$-dimensional Riemannian manifold  with
    \[
        \mathrm{diam}_g(M)\le D,\text{ }\Vol_g(M)\ge v_0
                \text{ and }\Rm_g+\varepsilon I_g\in\mathfrak{C}_{\eta,\mu}.
    \]
   Then it also admits a metric whose curvature operator lies
            in $\mathfrak{C}_{\eta,\mu}$.
\end{corollary}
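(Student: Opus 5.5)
Run the Ricci flow from $g$ for a uniform time and argue by contradiction and compactness, following \cite[Corollary 1]{BCW19Inv}. Suppose the statement fails. Then there are compact $4$-manifolds $(M_i,g_i)$ with $\mathrm{diam}_{g_i}(M_i)\le D$, $\Vol_{g_i}(M_i)\ge v_0$ and $\Rm_{g_i}+\varepsilon_i\mathcal{I}_{g_i}\in\mathfrak{C}_{\eta,\mu}$, $\varepsilon_i\to0$, none of which carries a metric with curvature operator in $\mathfrak{C}_{\eta,\mu}$. Since $\varepsilon_i\le1$ and $\eta\le1$, Lemma \ref{lma 2-non flag} applied to $\Rm_{g_i}+\varepsilon_i\mathcal{I}$ gives $\Ric_{g_i}\ge-3\varepsilon_i\ge-3$. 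Bishop--Gromov with the diameter bound then turns $\Vol\ge v_0$ into a uniform lower bound $\VolB_{g_i}(x,1)\ge v_0'(v_0,D)>0$ for all $x$.

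Each $M_i$ is compact, so Hamilton's flow $g_i(t)$ exists for a short time with bounded curvature. We may apply Theorem \ref{thm bcw thm1} with $\alpha_0=\varepsilon_i$ and $v_0'$. It gives, on $(0,T_i\wedge\hat T]$,
\[
\Rm_{g_i(t)}+D_1\varepsilon_i\mathcal{I}\in\mathfrak{C}_{\eta,\mu},\qquad |\Rm|_{g_i(t)}\le D_1/t .
\]
The curvature bound rules out blow-up before $\hat T$, so the flow exists on $[0,\hat T]$ by a standard continuation argument. We therefore fix $t_*=\hat T(\mu,v_0')$. At this time $|\Rm|_{g_i(t_*)}\le D_1/t_*$. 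The volume is almost preserved for a uniform time \cite[Corollary 6.2]{Simon12}, so $\VolB_{g_i(t_*)}(x,1)$ is bounded below. Distance distortion under $|\Rm|\le D_1/t$ with $\Ric\ge-C$ keeps the diameter bounded. Cheeger--Gromov--Taylor then gives a uniform injectivity radius lower bound, and Shi's estimates on $[t_*/2,t_*]$ give bounds on all derivatives.

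By Cheeger--Gromov compactness, a subsequence $(M_i,g_i(t_*))$ converges smoothly to a compact $(M_\infty,g_\infty)$. The diameter bound excludes escape to infinity, so $M_i$ is diffeomorphic to $M_\infty$ for large $i$. Since $\mathfrak{C}_{\eta,\mu}$ is closed and $D_1\varepsilon_i\to0$, the limit satisfies $\Rm_{g_\infty}\in\mathfrak{C}_{\eta,\mu}$. Pulling $g_\infty$ back by these diffeomorphisms gives a metric on $M_i$ with curvature operator in $\mathfrak{C}_{\eta,\mu}$, which is the desired contradiction.

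The main obstacle is uniformity. We need the existence time $\hat T$ to depend only on $(\mu,v_0')$ and not on $\sup|\Rm_{g_i}|$, which may blow up, and we need the noncollapsing to hold at $t_*$. Both come from Theorem \ref{thm bcw thm1} once the unit-ball volume bound is derived from the Ricci lower bound.
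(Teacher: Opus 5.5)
Your proposal is correct and is essentially the paper's proof. You argue by contradiction with $\varepsilon_i\to0$, use Bishop--Gromov (with the Ricci lower bound from Lemma \ref{lma 2-non flag}) to get a uniform unit-ball volume bound, run the flow for the uniform time given by Theorem \ref{thm bcw thm1}, and use the diameter bound to get a limit diffeomorphic to $M_i$ whose curvature operator lies in the closed cone $\mathfrak{C}_{\eta,\mu}$. The only differences are that you take the limit of the time-$t_*$ slices rather than of the whole flow on $(0,\hat{T}]$, and that you supply details the paper leaves implicit (continuation up to $\hat{T}$, diameter and injectivity radius control); just take $t_*$ no larger than the uniform time on which the unit-ball volume bound persists.
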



\dummyline{0.5}

\begin{proof}
    The proof follows that of \cite[Corollary 3]{BCW19Inv}.
    If the conclusion fails, we can obtain a sequence of compact
            $4$-dimensional Riemannian manifolds $\{(M_i,g_i)\}_{i=1}^\infty$,
            satisfying the assumptions with $\varepsilon=1/i$, but $M_i$ admits no metric with curvature operator in $\mathfrak{C}_{\eta,\mu}$. 
    By Shi's existence theorem \cite{Shi89JDG} and Theorem \ref{thm bcw thm1},
            we can construct a sequence of Ricci flows which would converge,
            after passing to a subsequence, to a Ricci flow
            $(M_\infty,g_\infty(t))_{t\in(0,\hat{T}]}$ with
            $\Rm_{g_\infty(t)}\in\mathfrak{C}_{\eta,\mu}$,
            where $\hat{T}(\mu,v_0, D)>0$ is given by Theorem \ref{thm bcw thm1}, noting that here we haved used  Bishop–Gromov volume comparison to guarantee the uniform lower bound $\tilde{v_0}(v_0, D)$ of volume of unit balls.
    Moreover, the uniform diameter bound ensures that $M_\infty$ is
            diffeomorphic to $M_i$ provided $i$ is large enough,
            which contradicts the choice of $M_i$.
\end{proof}

\dummyline{1}

\begin{corollary} \label{cor BCW2}
    Let $\eta,\mu$ be as in Theorem \ref{thm bcw thm1}.
    Suppose $v_0>0$ and $(X,d_X)$ is the Gromov-Hausdorff limit of a sequence
            $\{(M_i,g_i)\}_{i=1}^\infty$ of compact $4$-dimensional Riemannian manifolds
            satisfying
    \[
        \Vol_{g_i}(M_i)\ge v_0,\text{ }
                \Rm_{g_i}+\varepsilon_i I_{g_i}\in\mathfrak{C}_{\eta,\mu}
                \text{ and }\mathrm{diam}_{g_i}(M_i)\le D,
    \]
    for some sequence $\{\varepsilon_i\}\subset(0,1]$ with
            $\varepsilon_i\to\varepsilon_\infty$ as $i\to\infty$.
    Then there exist $\tau(\mu,v_0, D)>0$, a smooth manifold $M_\infty$ and a smooth
            Ricci flow $(M_\infty,g_\infty(t))_{t\in(0,\tau)}$ which satisfies
    \[
        \Rm_{g_\infty(t)}+D_1\varepsilon_\infty I_{g_\infty(t)}
                \in\mathfrak{C}_{\eta,\mu},
    \]
    and is coming out of the (possibly singular) space $(X,d_X)$ in the sense
            that
    \begin{equation} \label{eqn lim X dx}
        \lim_{t\rightarrow0}d_\mathrm{GH}\bigl((X,d_X),(M_\infty,d_{g_\infty(t)})
                \bigr)=0.
    \end{equation}
    Here $D_1$ is the constant given by Theorem \ref{thm bcw thm1}.
    In particular, for $\varepsilon_\infty=0$, we have $\Rm_{g_\infty(t)}
            \in\mathfrak{C}_{\eta,\mu}$ for all $t\in(0,\tau)$.
    Moreover, for any choice of $\varepsilon_\infty$, the space $X$ is
            homeomorphic to the manifold $M_\infty$ and $d_{g_\infty(t)}$
            converges uniformly to a distance function $d_0$ on $M_\infty$
            as $t\rightarrow0$ such that $(M_\infty,d_0)$ is isometric to
            $(X,d_X)$.
\end{corollary}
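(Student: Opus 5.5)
We will follow the proof of \cite[Corollary 4]{BCW19Inv}, with Theorem \ref{thm bcw thm1} as the main input.

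\emph{Step 1: uniform setup.} By Bishop--Gromov comparison we get a uniform lower bound $\tilde v_0(v_0,D)>0$ for the volume of unit balls in $(M_i,g_i)$. For this we need a uniform Ricci lower bound. It comes from $\Rm_{g_i}+\varepsilon_i\mathcal{I}\in\mathfrak{C}_{\eta,\mu}$ together with Lemma \ref{lma 2-non flag} (using $\eta\le1$), applied to the shifted operator; this gives $\Ric_{g_i}\ge-3\varepsilon_i\ge-3$.

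\emph{Step 2: flows with uniform estimates.} We run Shi's flow $g_i(t)$ from $g_i$; it has bounded curvature since $M_i$ is compact. Theorem \ref{thm bcw thm1} with $\alpha_0=\varepsilon_i$ then gives $\hat T(\mu,\tilde v_0)$ and $D_1$ such that, for $t\in(0,\hat T]$,
\[
|\Rm|_{g_i(t)}\le D_1/t \quad\text{and}\quad \Rm_{g_i(t)}+D_1\varepsilon_i\mathcal{I}\in\mathfrak{C}_{\eta,\mu}.
\]
We must also check that the Shi flow exists up to time $\hat T$. This follows from the a priori bound $D_1/t$ by the usual continuation argument, since the curvature cannot blow up before $\hat T$. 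Cheeger--Gromov--Taylor \cite{CGT82JDG} then gives $\Inj_{g_i(t)}\ge\sqrt{t/C}$. We also need a uniform diameter bound along the flow. Distance distortion under $\Ric\ge-C$ and $|\Rm|\le D_1/t$, namely $d_{g(t)}\ge d_{g(0)}-C\sqrt t$ and $d_{g(t)}\le e^{Ct}d_{g(0)}$, gives $\mathrm{diam}\le 2D$ for small $t$.

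\emph{Step 3: limit flow.} Hamilton's compactness theorem yields a subsequence converging smoothly to a flow $(M_\infty,g_\infty(t))_{t\in(0,\tau)}$ with $\tau=\hat T$. The diameter bound makes $M_\infty$ compact, and the $M_i$ are eventually diffeomorphic to $M_\infty$. The cone is closed, so passing to the limit gives $\Rm_{g_\infty(t)}+D_1\varepsilon_\infty\mathcal{I}\in\mathfrak{C}_{\eta,\mu}$; the case $\varepsilon_\infty=0$ is immediate.

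\emph{Step 4: convergence to $X$ as $t\to0$ (main obstacle).} From the distortion estimates we get, uniformly in $i$,
\[
|d_{g_i(t)}-d_{g_i(0)}|\le C\sqrt t .
\]
Hence $d_{\mathrm{GH}}\bigl((M_i,g_i(t)),(M_i,g_i)\bigr)\le C\sqrt t$. Letting $i\to\infty$ gives $d_{\mathrm{GH}}\bigl((M_\infty,d_{g_\infty(t)}),X\bigr)\le C\sqrt t$, which is \eqref{eqn lim X dx}. On $M_\infty$ the same estimates, which pass to the limit, show that $d_{g_\infty(t)}$ is uniformly Cauchy as $t\to0$, so it converges uniformly to a metric $d_0$.

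It remains to identify $(M_\infty,d_0)$ with $(X,d_X)$, which is the delicate step. We use the maps $\Phi_i:M_\infty\to M_i$ from Cheeger--Gromov convergence; at each fixed positive time they are approximate isometries of the time-$t$ distances, and so the error at time $0$ is at most $2C\sqrt t+o_i(1)$. Thus the $\Phi_i$ are GH approximations from $(M_\infty,d_0)$ to $(M_i,d_{g_i})$, and $(M_\infty,d_0)$ is isometric to $X$. Finally, $d_0$ induces the manifold topology. The lower bound $d_{g(t)}\ge d_0-C\sqrt t$ and the upper bound $d_0\le e^{Ct}d_{g(t)}$ alone do not give this, since they are off by an additive constant. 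Instead we argue as in \cite{BCW19Inv} via Simon--Topping type estimates, which control $d_0$ in terms of $d_{g(t)}$ at fixed small $t$. So $X$ is homeomorphic to $M_\infty$.
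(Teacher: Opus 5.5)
Your proposal is correct, and through the identification of $(M_\infty,d_0)$ with $X$ it follows the paper's argument:
\begin{itemize}
\item Bishop--Gromov for a unit-ball bound $\tilde v_0(v_0,D)$;
\item Shi's flow together with Theorem~\ref{thm bcw thm1};
\item Hamilton compactness;
\item Simon's double triangle inequality with two-sided distance distortion.
\end{itemize}
There is one small omission. Cheeger--Gromov--Taylor needs a unit-ball volume lower bound at positive times. You can get it from Lemma~\ref{fact lower vol. ctrl.}, using $\Ric_{g_i(t)}\ge-3D_1$ from the shifted cone condition, or from the proof of Theorem~\ref{thm bcw thm1}.

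The real divergence is the last step, and the obstacle you describe there is not real. First, the inequality you quote is reversed. The Ricci lower bound gives $d_{g(t)}\le e^{Ct}d_0$, not $d_0\le e^{Ct}d_{g(t)}$. This Lipschitz bound makes the identity $(M_\infty,d_0)\to(M_\infty,d_{g(t)})$ continuous.

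For the inverse map, the additive error in $d_0\le d_{g(s)}+C\sqrt{s}$ does no harm. This inequality holds for every $s\in(0,t]$, and each smooth metric $g(s)$ induces the manifold topology. So if $x_n\to x$ in $M_\infty$, then $\limsup_n d_0(x_n,x)\le C\sqrt{s}$ for every $s>0$, hence $d_0(x_n,x)\to0$. This is the ``approximation argument'' the paper uses, following \cite{BCW19Inv}; no further estimate is needed.

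Your alternative, a Simon--Topping type H\"older bound of $d_0$ by a power of $d_{g(t)}$, does apply here. Along the limit flow one has $\Ric\ge-3D_1$ and $|\Rm|\le D_1/t$. It even gives the stronger bi-H\"older conclusion, as in Corollary~\ref{cor pGH Holder}. But it is heavier machinery than the statement requires, and your stated reason for needing it is mistaken.

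Note also the order of the steps. This topological step must come before you conclude that $(M_\infty,d_0)$ is isometric to $X$. Gromov--Hausdorff distance zero yields an isometry only once $(M_\infty,d_0)$ is known to be compact, or at least complete. That follows from the continuity of $(M_\infty,d_{g(t)})\to(M_\infty,d_0)$ just established.
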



\dummyline{0.5}

\begin{proof}
    The proof follows that of \cite[Corollary 4]{BCW19Inv}.
    Arguing as in the proof of the previous corollary, we obtain a limiting
            Ricci flow $(M_\infty,g(t))_{t\in(0,\hat{T}]}$ with
            $\Rm_{g(t)}+D_1\varepsilon_\infty I_{g(t)}
            \in\mathfrak{C}_{\eta,\mu}$, where $\hat{T}(\mu,v_0,D),D_1(\mu,v_0,D)>0$
            are given by Theorem \ref{thm bcw thm1}.
    The proof of \eqref{eqn lim X dx} follows as in \cite[Theorem 7.2]
            {Simon09JRAM} by applying twice the triangle inequality combined
            with a two sided distance distortion control given by
            \cite[Lemma 6.1]{Simon09JRAM}.

    The Ricci curvature lower bound and the curvature decay estimate give a distance
            distortion of the form
    \[
        e^{-C(t_2-t_1)}d_{g(t_2)}(x,y)\le d_{g(t_1)}(x,y)\le
                d_{g(t_2)}(x,y)+C\sqrt{t_2},
    \]
    for any $x,y\in M_\infty$ and $0<t_1<t_2$, where $C(\mu,v_0,D)>0$.
    Then the limit $d_0(x,y):=\lim\limits_{t\to0}d_{g(t)}(x,y)$ exists and
    \begin{equation} \label{eqn dist. d0}
        e^{-Ct}d_{g(t)}(x,y)\le d_0(x,y)\le d_{g(t)}(x,y)+C\sqrt{t},
    \end{equation}
    for all $t>0$, which implies that $(M_\infty,d_0)$, as a metric space,
            is isometric to $(X,d_X)$ by \eqref{eqn lim X dx}.
    For some fixed $t>0$, the first inequality in \eqref{eqn dist. d0} implies
            that the identity map $(M_\infty,d_0)\to(M_\infty,d_{g(t)})$ is
            continuous, and its inverse is also continuous by the second
            inequality in \eqref{eqn dist. d0} combined with an approximation
            argument.
    This completes the proof.
\end{proof}

\dummyline{1}

Now let us come back to the case of noncompact manifolds. Since we may not have sectional curvature lower bound in our case, we can not construct Ricci flows by a simpler conformal change argument directly.  Thanks to the technique developed by Lai in \cite{Lai19AdvMath}, we are able to obtain the following short-time existence result for Ricci flow locally.

\begin{theorem} \label{thm local exist.1}
    For any $\alpha_0\in(0,1]$, $v_0>0$, $\eta\in(0,1]$ and $\mu\ge\eta+1$,
            there exist $T(\eta,\mu,\alpha_0,v_0)>0$, $D_1(\mu,v_0)>0$ and
            $D_2(\eta,\mu,v_0)>0$ such that the following holds.
    Let $(M,g_0)$ be a $4$-dimensional Riemannian manifold.
    Suppose $B_{g_0}(p,s_0)\subset\subset M$ for some $p\in M$ and $s_0\ge4$
            such that
    \[\left\{\;\begin{aligned}
        &\Rm_{g_0}+\alpha_0\mathcal{I}_{g_0}\in\mathfrak{C}_{\eta,\mu}
                \text{ on }B_{g_0}(p,s_0); \\
        &\VolB_{g_0}(x,1)\ge v_0\text{ for all }x\in B_{g_0}(p,s_0-1).
    \end{aligned}\right.\]
    Then there exists a Ricci flow $g(t)$ on $B_{g_0}(p,s_0-2)$
            for $t\in[0,T]$ with $g(0)=g_0$ such that
    \[\left\{\;\begin{aligned}
        &\sup\limits_{B_{g_0}(p,s_0-2)}|\Rm|_{g(t)}\le\frac{D_1}{t}\text{ for all }t\in(0,T]; \\
        &\Rm_{g(t)}+D_2\alpha_0\mathcal{I}_{g(t)}\in\mathfrak{C}_{\eta,\mu}.
    \end{aligned}\right.\]
\end{theorem}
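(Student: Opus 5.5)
We follow the iterative scheme of Theorem \ref{thm local exist.0}, replacing the local cone preservation of Theorem \ref{thm t power k} by a local version of the lower-bound estimate in Theorem \ref{thm bcw thm1}, in the spirit of Lai \cite{Lai19AdvMath}. The core is a local a priori statement. Suppose $g(t)$ is a Ricci flow on $B_{g_0}(p,r_0)\times[0,t_0]$ with $|\Rm|\le a/t$, and $\Rm_{g_0}+\alpha_0\mathcal{I}\in\mathfrak{C}_{\eta,\mu}$ with unit-ball volume $\ge v_0$. Then on a slightly smaller ball, and for $t\le t_0$ below a threshold depending on $\eta,\mu,\alpha_0,v_0$, we have two bounds. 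First, $l\le D_2\alpha_0$, where $l$ is the function of Lemma \ref{lma evol. l}. Second, $|\Rm|\le D_1/t$ with $D_1=D_1(\mu,v_0)$, crucially independent of $a$.

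Given this statement, the extension step is exactly as before. We extend by Proposition \ref{fact local RF} at time $t_k$ with $\rho=\sqrt{D_1^{-1}t_k}$, which produces a bound $a/t$ with $a=\Lambda D_1(1+\nu)^2$. The shrinking balls lemma controls the radii $r_k$. The iteration terminates with $t_k\ge\sigma_1>0$ or with $t_k$ reaching the threshold, giving $T(\eta,\mu,\alpha_0,v_0)$.

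For the a priori estimate we proceed in two parts.

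\textbf{(a) Lower bound on $l$.} Lemma \ref{lma evol. l} gives $\Box l\le \mathcal{R}l+6l^2$ in the barrier sense, together with $l\le C_\eta|\Rm|\le C_\eta a/t$. Multiply $l$ by a cut-off $\Phi=\phi(d_{g(t)}(p,\cdot)+C\sqrt{at})$ built as in Theorem \ref{thm t power k}, using Lemma \ref{fact exist. of cutoff} and Perelman's Lemma 8.3. Then apply the local maximum principle of \cite{LT22CJM}, or the weighted heat-kernel argument of \cite{BCW19Inv} localized as in \cite{Lai19AdvMath}. A priori $\mathcal{R}\le C a/t$ is not integrable, so we must first work on a short interval where $l\le 1$ and $|\Rm|\le D_1/t$ with $D_1$ universal. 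There the heat-kernel bound gives $l\le D_2\alpha_0$, with $D_2$ depending on $\eta$ through $C_\eta$ and on the constants in the Gaussian bounds.

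\textbf{(b) Curvature bound.} With $\Rm+\mathcal{I}\in\mathfrak{C}_{\eta,\mu}$ after scaling (use $D_2\alpha_0\le D_2$ and rescale by $\sqrt{D_2}$), Lemma \ref{lma pseudo.} with $K=D_2\alpha_0 r^2$ and $r$ of order $\sqrt{t_0}$ gives $|\Rm|\le C_0/t$ and an injectivity radius bound. Because Lemma \ref{lma pseudo.} is stated with $\bar T(\mu,v_0,K)$ and $C_0(\mu,v_0,K)$, we apply it at scale $r=\delta_1\sqrt{t_0}\le 1$, so that $K=D_2\alpha_0 r^2\le 1$. As in Claim \ref{claim5.1}, this makes $C_0=C_0(\mu,v_0)$, which is the constant we call $D_1$. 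The condition $\delta_1\sqrt{t_k}\le1$, together with the requirement $t\le$ (the threshold from (a)), determines where the iteration stops. This is why $T$ depends on $\alpha_0$ while $D_1$ does not.

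\textbf{Main obstacle.} The hardest step is closing the bootstrap between (a) and (b) in the local, incomplete setting. Part (a) needs $|\Rm|\le D_1/t$ with a universal $D_1$, while part (b) needs $l$ bounded. We plan to argue by continuity. Let $t^*$ be the first time at which either $l\le 2D_2\alpha_0$ or $|\Rm|\le 2D_1/t$ fails on the shrinking ball. Before $t^*$, Lemma \ref{lma pseudo.} improves the curvature bound to $D_1/t$, and the localized heat-kernel or maximum-principle estimate of Lai type improves $l$ to $D_2\alpha_0$. The delicate point is the cut-off error terms $\Box\Phi/\Phi$ and $|\nabla\Phi|^2/\Phi^2$ in the equation for $\Phi l$. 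These are absorbed via the $\phi^{1-\varepsilon}$ cut-off and the $6l^2$ term, provided $t\le c(\eta,\mu,v_0)\alpha_0^{-1}$-type restrictions hold; this is where the $\alpha_0$-dependence of $T$ enters. Finally, the volume lower bound along the flow is needed both for Lemma \ref{lma pseudo.} and for the heat kernel. It follows from Lemma \ref{fact lower vol. ctrl.}, using $\Ric\ge -3D_2\alpha_0$ from Lemma \ref{lma 2-non flag}.
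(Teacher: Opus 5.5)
Your architecture matches the paper's, which follows Lai: extend with Proposition \ref{fact local RF}, upgrade the curvature decay with Lemma \ref{lma pseudo.} once $l$ is controlled, and control $l$ through Lemma \ref{lma evol. l} and a localized heat-kernel argument. The gap is exactly at the step you flag as the main obstacle.

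Your continuity argument (the first time $t^*$ at which $l\le 2D_2\alpha_0$ or $|\Rm|\le 2D_1/t$ fails on the shrinking ball) does not close. Both improvements hold only on strictly smaller sets. Lemma \ref{lma pseudo.} turns information on $B(x,r)$, with $r\gtrsim\sqrt{t/\bar T}$, into a bound on $B(x,r/8)$. A cut-off heat-kernel estimate bounds $l$ only where the cut-off equals $1$. So at a failure point near the edge of the ball where you made the assumption, nothing is improved. Letting the radius shrink in time, say like $R-L\sqrt{t}$, does not help: the loss has parabolic size $\sqrt{t^*}$, while the domains at times just below $t^*$ are essentially the same.

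For the same reason, your ``core a priori statement'', which assumes only $|\Rm|\le a/t$, cannot be fed directly into a heat-kernel argument. With only $l\le C_\eta|\Rm|\le C_\eta a/t$, the term $6l^2$ acts as a potential $6C_\eta a/t$, which is not integrable at $t=0$. This, rather than $\mathcal{R}l$, is the real obstruction; the $\mathcal{R}l$ term is what the heat-kernel machinery of \cite{BCW19Inv} is built for. Note also that a heat kernel needs the complete pieces of Lai's Ricci flow in expansion (conformal completion plus Shi's theorem), not just the incomplete flow from Proposition \ref{fact local RF}.

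The paper avoids any continuity argument by using Lai's discrete induction (APA 1)--(APA 3) along geometrically growing times $t_{i+2}=(1+\nu)^2t_{i+1}$.
\begin{itemize}
\item On $[0,t_{i+1}]$, the previous step already gives $l\le D_2\alpha_0<1$.
\item On the single new interval $[t_{i+1},t_{i+2}]$, the crude bound $l\le C_\eta D_1/t$ has time integral at most $2C_\eta D_1\log(1+\nu)$.
\item With the generalized heat kernel, these two facts first give a rough bound $l\le C(\eta,\mu,v_0)$.
\item The rough bound makes $\mathcal{L}=e^{-Ct}l$ satisfy $\Box\mathcal{L}\le\mathcal{R}\mathcal{L}$, and it supplies the Ricci lower bound needed for cut-offs and volume control.
\item The cut-off representation then yields $l\le Ce^{-1/(C\sqrt{t_{i+2}})}+2C\alpha_0$.
\end{itemize}
The radius lost per step is $O(\sqrt{t_i})$, which sums geometrically.

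This also shows that your account of the $\alpha_0$-dependence is wrong. The cut-off errors are not ``absorbed via the $\phi^{1-\varepsilon}$ cut-off and the $6l^2$ term''; $6l^2$ is a source term and damps nothing. They survive as an additive Gaussian-tail error that is independent of $\alpha_0$. $T(\eta,\mu,\alpha_0,v_0)$ is then dictated by requiring $Ce^{-1/(C\sqrt{T})}\le\alpha_0$, so $T\to0$ as $\alpha_0\to0$. A restriction of the form $t\le c\,\alpha_0^{-1}$ becomes vacuous as $\alpha_0\to0$. If it sufficed, you would get an $\alpha_0$-independent existence time and, letting $\alpha_0\to0$, a cone-preserving local flow directly from this argument. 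That is a sign the localization error has been lost.
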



\dummyline{0.5}

\begin{proof}
    The proof follows that of \cite[Theorem 1.1]{Lai19AdvMath}.
    Although \cite[Theorem 1.1]{Lai19AdvMath} assumes a lower bound with respect to
            $\mathfrak{C}_\mathrm{WPIC1}$, its proof only relies on the
            following three properties of $\mathfrak{C}_\mathrm{WPIC1}$:
             Ricci curvature  lower bound,  curvature decay
            lemma  and  evolution
            inequality of the lower bound $l$ with respect to the curvature cone, which are all satisfied by
            $\mathfrak{C}_{\eta,\mu}$ according to Lemma \ref{lma 2-non flag},
            Lemma \ref{lma pseudo.} and Lemma \ref{lma evol. l}.
    We sketch key steps of the proof here.

    Since we have the Ricci lower bound, by reducing $v_0$, we may assume that
            $\VolB_{g_0}(x,r)\ge v_0r^4$ for all $r\in(0,1]$.
    By rescaling, we may also assume that $\alpha_0\le\frac{1}{2D_2}<1$
            for some $D_2(\eta,\mu,v_0)>2$ to be chosen later.
    We want to use induction to construct a Ricci flow in expansion
            $(\{M_j\}_{j=1}^i,\{g_j(t)\}_{j=1}^i,\nu)$,
            see \cite[Definition 5.3]{Lai19AdvMath},
            satisfying the following conditions:
    \begin{enumerate}
        \item[\textup{(APA 1)}] Restricting it on $B_{g_0}(x_0,r_i)$, we get
                a smooth Ricci flow $g(t)$ up to $t_{i+1}$;
        \item[\textup{(APA 2)}] For each complete Ricci flow $(M_j,g_j(t))$,
                we have $|\Rm|_{g_j(t)}\le\frac{D_1}{t}$;
        \item[\textup{(APA 3)}] $l(x,t)\le D_2\alpha_0<1$ for all $(x,t)\in
                B_{g_0}(x_0,r_i)\times[0,t_{i+1}]$.
    \end{enumerate}
    Here $l(x,t)$ is defined as in \eqref{eqn def. l} and the constants
            $D_1(\mu,v_0),\nu(\mu,v_0)>0$ will be specified later.

    The initial step of the induction and the verification of the first two
            conditions are similar to the proof of Theorem
            \ref{thm local exist.0}.
    We first use conformal completion and Shi's existence theorem to construct
            a Ricci flow for some small time $t_1$.
    By shrinking $t_1$ if necessary, we can assume these three conditions hold
            for $i=0$.
    Provided (APA 3) holds, we can apply Lemma \ref{lma pseudo.} to
            a smaller ball to improve the curvature decay estimate.
    Then, using conformal completion, Shi's existence theorem and the
            doubling time estimate (or equivalently, invoking Proposition
            \ref{fact local RF}), we can extend the Ricci flow satisfying
            (APA 1) and (APA 2) for a longer time.
    The constants $D_1$ and $\nu$ are specified during the process of applying
            Lemma \ref{lma pseudo.} and doubling time estimate.

    The verification of (APA 3) is ingenious and introduces the generalized heat
            kernel, see \cite[Definition 5.4]{Lai19AdvMath}, with a similar
            Gaussian upper bound, see \cite[Proposition 5.5]{Lai19AdvMath}.
    Lemma \ref{fact lower vol. ctrl.} gives the preservation of the volume
            lower bound, which combined with the curvature decay estimate
            (APA 2) enables the invocation of the heat kernel estimates.
    With Lemma \ref{lma evol. l} established, we can use the heat kernel
            arguments to improve the upper bound of $l$ from \eqref{eqn l bound}
            to a rough uniform bound depending only on $\eta,\mu$ and $v_0$.
    The rough bound gives a better way to control the evolution of $l$,
            more precisely, from \cite[(7.3)]{Lai19AdvMath} to
            \cite[(7.11)]{Lai19AdvMath}, and a Ricci curvature lower bound, which will
            be used to obtain cut-off functions and volume controls
            in the next step.
    It remains to convert this rough bound to the stronger bound in (APA 3).
    For any fixed $x$ in some smaller ball and $t\in(t_{i+1},t_{i+2}]$, let
    \[
        \Phi(x,t,s):=\int_{B_{g_0}(x,3\sqrt[4]{t_{i+2}})}G(x,t;y,s)
                \mathcal{L}(y,s)\phi_{i+1}(y,s)d_sy,
    \]
    where $G$ denotes the generalized heat kernel, $\mathcal{L}(\cdot,t)
            :=e^{-Ct}l(\cdot,t)$ for some $C$ depending on the rough bound
            and $\phi_{i+1}$ is the cut-off function given by
            \cite[Lemma 4.1]{Lai19AdvMath}.
    Then $\mathcal{L}(x,t)=\lim\limits_{s\to t^-}\Phi(x,t,s)$ and hence
    \[
        \mathcal{L}(x,t)\le\Phi(x,t,t_1)
                +\int_{t_1}^t\frac{\partial^+}{\partial s}\Phi(x,t,s)ds.
    \]
    By the estimates of cut-off functions in \cite[Lemma 4.1]{Lai19AdvMath}
            and the gradient estimates of the heat kernel in
            \cite[Claim 5.8]{Lai19AdvMath}, as well as the evolution inequality
            of $\mathcal{L}$ and the heat kernel estimates, one eventually
            obtains that
    \[
        \mathcal{L}(x,t)\le C\exp\left(-\frac{1}{C\sqrt{t_{i+2}}}\right)
                +2\alpha_0C,
    \]
    for some $C(\eta,\mu,v_0)>0$.
    Since there exists $T(\eta,\mu,\alpha_0,v_0)>0$ such that the first term is
            bounded by $\alpha_0$ for $t_{i+2}\le T$, then by choosing
            $D_2(\eta,\mu,v_0)$ large enough, it holds that
            $l\le D_2\alpha_0$ as in (APA 3).
    Moreover, Lai's delicate choice of $r_{i}$ ensures that $r_0-r_{i+1}\le1$
            provided $T(\eta,\mu,\alpha_0,v_0)$ is small enough.
    This completes the induction step and hence the proof.
\end{proof}

\dummyline{1}

By taking a limit, we have:

\begin{corollary} \label{cor glabol exist.1}
       For any $\alpha_0\in(0,1]$, $v_0>0$, $\eta\in(0,1]$ and $\mu\ge\eta+1$,
            there exist $T(\eta,\mu,\alpha_0,v_0)>0$, $D_1(\mu,v_0)>0$ and
            $D_2(\eta,\mu,v_0)>0$ such that the following holds.
    Let $(M,g_0)$ be a $4$-dimensional complete noncompact Riemannian manifold
            (with possibly unbounded curvature) such that
    \[\left\{\;\begin{aligned}
        &\Rm_{g_0}+\alpha_0\mathcal{I}_{g_0}\in\mathfrak{C}_{\eta,\mu}; \\
        &\VolB_{g_0}(x,1)\ge v_0\text{ for all }x\in M.
    \end{aligned}\right.\]
    Then there exists a complete Ricci flow $g(t)$ on $M\times[0,T]$ with
            $g(0)=g_0$ such that
    \[\left\{\;\begin{aligned}
        &\sup\limits_{M}|\Rm|_{g(t)}\le\frac{D_1}{t}\text{ for all }t\in(0,T]; \\
        &\Rm_{g(t)}+D_2\alpha_0\mathcal{I}_{g(t)}\in\mathfrak{C}_{\eta,\mu}.
    \end{aligned}\right.\]
\end{corollary}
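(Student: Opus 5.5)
We will obtain the global flow by exhausting $M$ with balls and applying Theorem \ref{thm local exist.1} on each, exactly as Corollary \ref{cor global exist.0} was derived from Theorem \ref{thm local exist.0}. Fix $p\in M$. For each $R\ge 2$, completeness gives $B_{g_0}(p,R+2)\subset\subset M$, with $s_0=R+2\ge4$. The hypotheses of Theorem \ref{thm local exist.1} hold on this ball. Since $\VolB_{g_0}(x,1)\ge v_0$ for all $x\in M$, the volume condition is satisfied on $B_{g_0}(p,s_0-1)$. We therefore get a Ricci flow $g_R(t)$ on $B_{g_0}(p,R)\times[0,T]$ with $g_R(0)=g_0$. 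Here $T=T(\eta,\mu,\alpha_0,v_0)$, $D_1(\mu,v_0)$ and $D_2(\eta,\mu,v_0)$ are independent of $R$, and the flow satisfies $|\Rm|_{g_R(t)}\le D_1/t$ and $\Rm_{g_R(t)}+D_2\alpha_0\mathcal{I}_{g_R(t)}\in\mathfrak{C}_{\eta,\mu}$.

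Next we extract a limit as $R\to\infty$, following the proof of \cite[Theorem 1.1]{CHL24AnnPDE} as in Corollary \ref{cor global exist.0}. On any fixed compact $K\subset M$, the initial metric $g_0$ is smooth with bounded curvature near $K$. Chen's local estimates \cite{Chen09JDG} (see also \cite{Simon08IMRN}) then give curvature bounds for $g_R(t)$ on a neighbourhood of $K$ that are uniform in $R$ and hold all the way down to $t=0$. Combined with the modified Shi estimates \cite[Theorem 14.16]{CGIKLN08Book}, these yield uniform $C^k$ bounds for $g_R$ on $K\times[0,T]$, with $g_R$ uniformly equivalent to $g_0$ there. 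A diagonal Arzel\`a--Ascoli argument produces a subsequence converging in $C^\infty_{\mathrm{loc}}(M\times[0,T])$ to a smooth Ricci flow $g(t)$ with $g(0)=g_0$. Both conditions pass to the limit. The bound $|\Rm|\le D_1/t$ is pointwise. The cone condition passes because $\mathfrak{C}_{\eta,\mu}$ is closed and $\mathcal{I}_{g_R(t)}\to\mathcal{I}_{g(t)}$.

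The main obstacle is completeness of $g(t)$ for $t>0$. Here we use the Ricci lower bound. By Lemma \ref{lma 2-non flag} applied to $\Rm+D_2\alpha_0\mathcal{I}$, we get $\Ric_{g(t)}\ge -3D_2\alpha_0$. This gives $\partial_t g\le 6D_2\alpha_0\, g$, so $g(t)\le e^{6D_2\alpha_0 t}g_0$, which controls distances from above only. For the lower direction we use the shrinking balls lemma \cite[Corollary 3.3]{ST22JDG} together with $|\Rm|\le D_1/t$, and argue exactly as in Claim \ref{claim5.2}. For $t\le T$ this gives
\[
B_{g(t)}(x,r-\beta\sqrt{D_1t})\subset B_{g_0}(x,r),
\]
so $d_{g(t)}\ge d_{g_0}-\beta\sqrt{D_1t}$. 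Hence $g(t)$-bounded sets are $g_0$-bounded, and therefore relatively compact since $g_0$ is complete. It follows that $g(t)$ is complete, and the flow satisfies all the assertions of the corollary.
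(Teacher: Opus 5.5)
Your proposal is correct and follows the paper's route: exhaust $M$ by balls, apply Theorem \ref{thm local exist.1} with $R$-independent constants, then extract a limit via Chen's local estimates and the modified Shi estimates exactly as in Corollary \ref{cor global exist.0}, with the lower cone bound passing to the limit by closedness. The Ricci lower bound in your completeness step is not needed, since the shrinking-balls bound $d_{g(t)}\ge d_{g_0}-\beta\sqrt{D_1t}$ alone already gives completeness of $g(t)$.
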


\dummyline{0.5}

\begin{proof}
    Given the local existence result in Theorem \ref{thm local exist.1},
            the proof of global version is  standard.
    See the proof of Corollary \ref{cor global exist.0} or
            \cite[Corollary 1.2]{Lai19AdvMath} for details.
\end{proof}

\dummyline{1}

As an application, we obtain the following regularity result for Gromov-Hausdorff limits of  complete noncompact volume non-collapsed manifolds with a lower bound with respect to $\mathfrak{C}_{\eta,\mu}$. 

\begin{corollary} \label{cor pGH Holder}
    Suppose $\alpha_0\in(0,1]$, $v_0>0$, $\eta\in(0,1]$ and $\mu\ge\eta+1$.
    Let $(M_i,g_i)$ be a sequence of $4$-dimensional complete noncompact Riemannian
            manifolds such that for all $i$,
    \[\left\{\;\begin{aligned}
        &\Rm_{g_i}+\alpha_0\mathcal{I}_{g_i}\in\mathfrak{C}_{\eta,\mu}; \\
        &\VolB_{g_i}(x,1)\ge v_0\text{ for all }x\in M_i.
    \end{aligned}\right.\]
    Then there exist a smooth manifold $M$, a point $x_\infty\in M$ and
            a continuous distance metric $d_0$ on $M$ such that for some
            points $x_i\in M_i$, a subsequence of $(M_i,d_{g_i},x_i)$ converges
            to $(M,d_0,x_\infty)$ in pointed Gromov-Hausdorff sense.
    Furthermore, the metric space $(M,d_0)$ is locally bi-H\"older homeomorphic to the
            smooth manifold $M$ equipped with any smooth metric.
\end{corollary}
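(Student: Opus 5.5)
The plan follows the standard argument of \cite[Corollary 1.3]{Lai19AdvMath} (see also \cite{ST22JDG,HL21JGA}), with Corollary \ref{cor glabol exist.1} supplying the flows.

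\emph{Step 1: flows with uniform estimates.} Applying Corollary \ref{cor glabol exist.1} to each $(M_i,g_i)$ gives complete Ricci flows $g_i(t)$ on $M_i\times[0,T]$ with $g_i(0)=g_i$. These satisfy $|\Rm|_{g_i(t)}\le D_1/t$ and $\Rm_{g_i(t)}+D_2\alpha_0\mathcal{I}\in\mathfrak{C}_{\eta,\mu}$, with $T,D_1,D_2$ independent of $i$. By Lemma \ref{lma 2-non flag}, the cone condition gives a uniform lower Ricci bound $\Ric_{g_i(t)}\ge -K$ with $K=K(\eta,\mu,v_0)$, for $t\in[0,T]$. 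Lemma \ref{fact lower vol. ctrl.} keeps the unit-ball volumes bounded below by some $\varepsilon_0>0$ for a uniform time. Cheeger--Gromov--Taylor \cite{CGT82JDG} then gives $\Inj_{g_i(t)}\ge\sqrt{C^{-1}t}$.

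\emph{Step 2: the limit flow.} Fix base points $x_i\in M_i$. On each $[\delta,T]$, Shi-type estimates give uniform bounds on all derivatives of curvature. Hamilton's compactness theorem \cite{Hamilton95AJM}, with a diagonal argument as $\delta\to0$, then produces a subsequence converging to a complete flow $(M,g(t),x_\infty)$ for $t\in(0,T]$. This limit flow satisfies $|\Rm|\le D_1/t$ and $\Ric\ge-K$.

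\emph{Step 3: distance distortion.} Along each flow, and hence along the limit, the Ricci lower bound and the curvature decay give
\[
e^{-K(t_2-t_1)}d_{g(t_2)}\le d_{g(t_1)}\le d_{g(t_2)}+C\bigl(\sqrt{t_2}-\sqrt{t_1}\bigr),
\]
as in \cite[Lemma 6.1]{Simon09JRAM}. The second inequality comes from the shrinking balls lemma and Perelman's Lemma 8.3 with $|\Ric|\le C/t$. Consequently $d_0:=\lim_{t\to0}d_{g(t)}$ exists and is a continuous metric, exactly as in \eqref{eqn dist. d0}. The same estimates hold uniformly for the $g_i(t)$, which gives $d_{GH}$-closeness of $(M_i,d_{g_i},x_i)$ to $(M_i,d_{g_i(t)},x_i)$ on balls of bounded radius, uniformly in $i$, up to an error $C\sqrt t$. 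By the triangle inequality, as in the proof of Corollary \ref{cor BCW2}, this yields pointed GH convergence of $(M_i,d_{g_i},x_i)$ to $(M,d_0,x_\infty)$.

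\emph{Step 4: bi-H\"older regularity.} This is the main obstacle: the distortion in Step 3 only gives a homeomorphism. A H\"older bound needs a sharper comparison at scale $r$. The first thing to try is the standard estimate: for $d_0(x,y)=r$ small, take $t=r^2$ and use $|\Rm|\le D_1/t$ to obtain
\[
C^{-1}d_{g(t)}(x,y)^{1+\alpha}\le d_0(x,y)\le C\,d_{g(t)}(x,y)^{1/(1+\alpha)}.
\]
The route is as follows. The expanding direction uses $d_{g(s)}$ nonincreasing up to the factor $e^{Ks}$ together with the $+C\sqrt s$ term, optimized over $s\in(0,t]$. The contracting direction uses $\partial_t d\ge -C\sqrt{D_1/t}\,d$ on short geodesics, which integrates to $d_{g(t)}\ge (s/t)^{C'}d_{g(s)}$; choosing $s\sim d_{g(s)}^2$ then converts this into a power law. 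Finally, since any two smooth metrics on a compact set are bi-Lipschitz equivalent to $g(T)$, local bi-H\"older equivalence of $d_0$ with any smooth metric on $M$ follows.
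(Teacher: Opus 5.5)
Your proposal is correct and follows the same route as the paper. The steps match: flows from Corollary \ref{cor glabol exist.1} with volume control from Lemma \ref{fact lower vol. ctrl.}, Hamilton compactness, two-sided distance distortion to produce $d_0$, and pointed Gromov--Hausdorff convergence via the triangle inequality. For the bi-H\"older step, the paper simply cites \cite[Lemma 2.3]{Lai19AdvMath}, whereas you sketch its proof.

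Two corrections to that sketch. First, one direction is actually Lipschitz, $d_0\ge e^{-Kt}d_{g(t)}$, with no optimization needed. Second, the power law in the other direction comes from $\partial_t d\ge -C D_1 t^{-1}\,d$ (from $\Ric\le CD_1/t$), not from $-C\sqrt{D_1/t}\,d$. This is then combined with $d_0\le d_{g(s)}+C\sqrt{s}$ for an intermediate time $s$ chosen in terms of $d_{g(t)}(x,y)$.
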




\dummyline{0.5}

\begin{proof}
    The proof follows that of \cite[Corollary 1.3]{Lai19AdvMath},
            which is similar to that of Corollary \ref{cor BCW2}.
    We can apply Corollary \ref{cor glabol exist.1} and Lemma
            \ref{fact lower vol. ctrl.} to each $(M_i,g_i)$ to obtain a
            sequence of Ricci flows $\{(M_i,g_i(t),x_i)\}_{t\in(0,T]}$,
            which would converge, after passing to a subsequence,
            to a Ricci flow $(M,g(t),x_\infty)_{t\in(0,T]}$
            with the estimates for curvature, volume lower bound and distance
            distortion:
    \[\left\{\;\begin{aligned}
        &\Rm_{g(t)}+D\alpha_0\mathcal{I}_{g(t)}\in\mathfrak{C}_{\eta,\mu}; \\
        &\VolB_{g(t)}(x,1)\ge v\text{ for all }x\in M; \\
        &|\Rm|_{g(t)}\le\frac{D}{t},
    \end{aligned}\right.\]
    and
    \[
        d_{g(t_1)}(x,y)-D(\sqrt{t_2}-\sqrt{t_1})\le d_{g(t_2)}(x,y)
                \le e^{D(t_2-t_1)}d_{g(t_1)}(x,y),
    \]
    for any $x,y\in M$ and $0<t_1\le t_2\le T$, where $D(\eta,\mu,v_0)$,
            $T(\eta,\mu,\alpha_0,v_0)$ and $v(\eta,\mu,\alpha_0,v_0)$
            are positive constants.
    The distance distortion estimate implies that $d_{g(t)}$ converges locally
            uniformly to some metric $d_0$ as $t\to0$.
    Combined with that $d_{g_i(t)}$ converges locally uniformly to $d_{g_i}$,
            we have that $(M_i,d_{g_i},x_i)\to(M,d_0,x_\infty)$
            in the pointed Gromov-Hausdorff sense.
    The assertion of bi-H\"older homeomorphism follows from \cite[Lemma 2.3]
            {Lai19AdvMath}, see also the proof of \cite[Theorem 1.4]{ST21GT}.
    This completes the proof.
\end{proof}

\dummyline{1}


\appendix

\section{Algebraic curvature operators}
In this appendix, we review the notion of algebraic curvature operators and the evolution equation of the curvature operator under Ricci flow.
One may also read some textbooks or literature for more complete details, see for example \cite{CK04Book, Brendle10Book, AH2011, Wilking13JRAM, BCW19Inv}.

Consider the Lie algebra $\mathfrak{so}(n)$ of real-valued skew-symmetric matrices,
        the inner product and Lie bracket are defined by
\[
    \langle u,v\rangle:=-\frac{1}{2}\mathrm{tr}(uv)
            \quad\text{and}\quad[u,v]:=uv-vu
            \quad\text{for any }u,v\in\mathfrak{so}(n).
\]
Let $\{e_i\}_{i=1}^n$ be an orthonormal basis of $\mathbb{R}^n$.
We can identify $\wedge^2\mathbb{R}^n$ with $\mathfrak{so}(n)$ via
        the linear transformation determined by
\[
    e_i\wedge e_j\mapsto E_{ij}-E_{ji},
\]
where $E_{ij}$ is the matrix with $1$ at the $(i,j)$-entry and $0$ elsewhere.
The induced inner product on $\wedge^2\mathbb{R}^n$ is given by
\begin{align*}
    \langle e_i\wedge e_j,e_k\wedge e_l\rangle
            &=-\frac{1}{2}\tr((E_{ij}-E_{ji})(E_{kl}-E_{lk})) \\
    &=-\frac{1}{2}\tr(\delta_{jk}E_{il}-\delta_{jl}E_{ik}
            -\delta_{ik}E_{jl}+\delta_{il}E_{jk}) \\
    &=\delta_{ik}\delta_{jl}-\delta_{il}\delta_{jk}.
\end{align*}
Then the set $\{e_i\wedge e_j\}_{i<j}$ forms an orthonormal basis of
        $\wedge^2\mathbb{R}^n$.
The Lie bracket on $\wedge^2\mathbb{R}^n$ is given by
\begin{equation}\begin{aligned} \label{eqn Lie def.}
    \relax[e_i\wedge e_j,e_k\wedge e_l]&=(E_{ij}-E_{ji})(E_{kl}-E_{lk})
            -(E_{kl}-E_{lk})(E_{ij}-E_{ji}) \\
    &=\delta_{jk}E_{il}-\delta_{jl}E_{ik}-\delta_{ik}E_{jl}+\delta_{il}E_{jk} \\
            &\qquad-\delta_{il}E_{kj}+\delta_{jl}E_{ki}
            +\delta_{ik}E_{lj}-\delta_{jk}E_{li} \\
    &=\delta_{jk}(e_i\wedge e_l)-\delta_{jl}(e_i\wedge e_k)
            -\delta_{ik}(e_j\wedge e_l)+\delta_{il}(e_j\wedge e_k).
\end{aligned}\end{equation}
We identify $\wedge^2\mathbb{R}^n$ with its dual space
        $(\wedge^2\mathbb{R}^n)^*$ via the inner product and the corresponding
        isomorphism $u\mapsto\langle u,\cdot\rangle$.
Choose a basis $\{\varphi^\alpha\}$ of $(\wedge^2\mathbb{R}^n)^*$ and
        let $C_\gamma^{\alpha\beta}$ be the Lie structure constants defined by
\[
    [\varphi^\alpha,\varphi^\beta]=:C_\gamma^{\alpha\beta}\varphi^\gamma.
\]
Let $C_{(pq)}^{(ij)(kl)}$ be the structure constants with respect to the basis
        $\{e_i\wedge e_j\}_{i<j}$.
By \eqref{eqn Lie def.}, we have
\begin{equation} \label{eqn Lie str. const.}
    C_{(pq)}^{(ij)(kl)}=\delta^{jk}\mathcal{I}_{pq}^{il}
            -\delta^{jl}\mathcal{I}_{pq}^{ik}
            -\delta^{ik}\mathcal{I}_{pq}^{jl}
            +\delta^{il}\mathcal{I}_{pq}^{jk},
\end{equation}
where we write $\mathcal{I}_{ij}^{kl}:=\delta_i^k\delta_j^l
        -\delta_i^l\delta_j^k$ for notational convenience.
In this paper, we use common Latin letters to denote indices of
        $\mathbb{R}^n$ and Greek letters to denote indices of
        $\wedge^2\mathbb{R}^n$.

\dummyline{0.5}

Let $S_B^2(\mathfrak{so}(n))$ be the set of algebraic curvature
        operators on $\mathbb{R}^n$, i.e. the space of symmetric bilinear forms
        on $\mathfrak{so}(n)$ satisfying the first Bianchi identity and
every $\Rm\in S_B^2(\mathfrak{so}(n))$ can also be viewed as a
        self-adjoint linear operator on $\mathfrak{so}(n)$ or
        $\wedge^2\mathbb{R}^n$ by
\begin{equation}\begin{aligned} \label{eqn Rm def.}
    \langle\Rm(X\wedge Y),Z\wedge W\rangle
            &=\Rm(X\wedge Y,Z\wedge W) \\
    &=R(X,Y,Z,W)\quad\text{for any }X,Y,Z,W\in\mathbb{R}^n,
\end{aligned}\end{equation}
where $R(X,Y,Z,W)$ denotes the corresponding $(4,0)$ curvature tensor acting
        on $4$ vectors.

\dummyline{0.5}

The square of $\Rm$ is defined by
\[
    \Rm^2:=\Rm\circ\Rm: \wedge^2\mathbb{R}^n\to\wedge^2\mathbb{R}^n.
\]
Let $\{\varphi_\alpha\}$ be a basis of $\wedge^2\mathbb{R}^n$
        and $\{\varphi^\alpha\}$ the dual basis of $(\wedge^2\mathbb{R}^n)^*$.
Given a symmetric bilinear form $L$ on $\wedge^2\mathbb{R}^n$, we write
        $L_{\alpha\beta}:=L(\varphi_\alpha,\varphi_\beta)$.
Recalling that $C_\gamma^{\alpha\beta}$ are the Lie structure constants
        with respect to the basis $\{\varphi^\alpha\}$, the commutative bilinear
        operator $\#$ for curvature operators $M$ and $N$ is defined by
\begin{equation} \label{eqn sharp def.}
    (M\#N)_{\alpha\beta}:=\frac{1}{2}C_\alpha^{\gamma\eta}
            C_\beta^{\delta\theta}M_{\gamma\delta}N_{\eta\theta}.
\end{equation}
We write $M^\#:=M\#M$ for simplicity.

Now we  compute $\Rm^2$ and $\Rm^\#$ in terms of the
        components of $\Rm$.
Let $\{e_i\}$ be an orthonormal basis of $\mathbb{R}^n$ and
        $\{e_i\wedge e_j\}_{i<j}$ the corresponding orthonormal basis of
        $\wedge^2\mathbb{R}^n$.
By \eqref{eqn Rm def.}, we have \[R_{ijkl}:=\Rm(e_i,e_j,e_k,e_l)
        =\langle\Rm(e_i\wedge e_j),e_k\wedge e_l\rangle\] and hence
\[
    \Rm(e_i\wedge e_j)=\sum_{k<l}R_{ijkl}e_k\wedge e_l.
\]
Then for any $X=X^ie_i,Y=Y^je_j,Z=Z^ke_k,W=W^le_l$ in $\mathbb{R}^n$, we have
\begin{align*}
    &\Rm^2(X,Y,Z,W):=\langle\Rm^2(X\wedge Y),Z\wedge W\rangle \\
    &\qquad=X^iY^jZ^kW^l\sum_{p<q,\;r<s}R_{ijpq}R_{pqrs}
            \langle e_r\wedge e_s,e_k\wedge e_l\rangle \\
    &\qquad=\frac{1}{2}\sum_{p,q=1}^nR(X,Y,e_p,e_q)R(e_p,e_q,Z,W).
\end{align*}
For $\Rm^\#$, by \eqref{eqn Lie str. const.}, we have
\begin{align*}
    C_{(ij)}^{(xy)(pq)}R_{xyzw}&=(\delta^{yp}\mathcal{I}_{ij}^{xq}
            -\delta^{yq}\mathcal{I}_{ij}^{xp})R_{xyzw}
            -(\delta^{xp}\mathcal{I}_{ij}^{yq}
            -\delta^{xq}\mathcal{I}_{ij}^{yp})R_{xyzw} \\
    &=(\delta^{yp}\mathcal{I}_{ij}^{xq}
            -\delta^{yq}\mathcal{I}_{ij}^{xp})R_{xyzw}
            -(\delta^{yp}\mathcal{I}_{ij}^{xq}
            -\delta^{yq}\mathcal{I}_{ij}^{xp})R_{yxzw} \\
    &=2(\delta^{yp}\mathcal{I}_{ij}^{xq}
            -\delta^{yq}\mathcal{I}_{ij}^{xp})R_{xyzw},
\end{align*}
and then
\begin{align*}
    C_{(ij)}^{(xy)(pq)}R_{xyzw}R_{pqrs}&=2R_{xyzw}(
            \delta^{yp}\mathcal{I}_{ij}^{xq}R_{pqrs}
            -\delta^{yq}\mathcal{I}_{ij}^{xp}R_{pqrs}) \\
    &=2R_{xyzw}(\delta^{yp}\mathcal{I}_{ij}^{xq}R_{pqrs}
            -\delta^{yp}\mathcal{I}_{ij}^{xq}R_{qprs}) \\
    &=4\delta^{yp}\mathcal{I}_{ij}^{xq}R_{xyzw}R_{pqrs}.
\end{align*}
Similarly, we have
\begin{equation}\begin{aligned} \label{eqn Rm sharp}
    &\Rm^\#(X,Y,Z,W):=\Rm^\#(X\wedge Y,Z\wedge W) \\
    &\qquad=\frac{1}{2}X^iY^jZ^kW^l\sum_{x<y,\;z<w,\;p<q,\;r<s}
            C_{(ij)}^{(xy)(pq)}C_{(kl)}^{(zw)(rs)}R_{xyzw}R_{pqrs} \\
    &\qquad=\frac{1}{2}X^iY^jZ^kW^l\delta^{yp}\mathcal{I}_{ij}^{xq}
            \delta^{wr}\mathcal{I}_{kl}^{zs}R_{xyzw}R_{pqrs} \\
    &\qquad=\frac{1}{2}X^iY^jZ^kW^l\sum_{p,w=1}^n\bigl(R_{ipkw}R_{jplw}
            -R_{jpkw}R_{iplw} \\
    &\qquad\qquad\qquad\qquad\qquad\qquad
            -R_{iplw}R_{jpkw}+R_{jplw}R_{ipkw}\bigr) \\
    &\qquad=\sum_{p,q=1}^n\bigl(R(X,e_p,Z,e_q)R(Y,e_p,W,e_q) \\
    &\qquad\qquad\qquad-R(Y,e_p,Z,e_q)R(X,e_p,W,e_q)\bigr).
\end{aligned}\end{equation}
We remind readers that neither $\Rm^2$ nor $\Rm^\#$ is a curvature operator
        in general, but they are still well-defined as symmetric bilinear forms
        or self-adjoint linear operators and their sum is a curvature operator.

\dummyline{0.5}

Suppose $(M^n,g(t))$ is a smooth solution to the Ricci flow
\[
    \ddpfir{t}g(t)=-2\Ric_{g(t)}.
\]
We denote by $\Ric=\Ric(\Rm)$ and $\mathcal{R}=\mathcal{R}(\Rm)$ the Ricci
        curvature and scalar curvature associated to $\Rm$.
Let $\nabla_t$ denote the natural space-time extension of the Levi-Civita
        connection of $g(t)$ so that it is compatible with the metric, i.e.
\[
    \nabla_t g(t)=0.
\]
The evolution equation of the curvature operator \cite{Hamilton86JDG} is
\begin{equation} \label{eqn evol. Rm}
    \nabla_t\Rm=\Delta\Rm+2Q(\Rm)\quad\text{where}\quad Q(\Rm):=\Rm^2+\Rm^\#.
\end{equation}

\dummyline{0.5}

Let $A,B$ be symmetric bilinear forms on $\mathbb{R}^n$.
The Kulkarni-Nomizu product $A\owedge B$ is a curvature operator defined by
\[
    (A\owedge B)_{ijkl}=A_{ik}B_{jl}+A_{jl}B_{ik}-A_{il}B_{jk}-A_{jk}B_{il}.
\]
Let $\mathcal{I}$ be the identity operator on $\wedge^2\mathbb{R}^n$ and hence
        a curvature operator with constant sectional curvature $1$.

\begin{lemma}[{\cite[Lemma 2.1]{BW08AnnMath}}] \label{lma op. sharp}
    For any $\Rm\in S_B^2(\mathfrak{so}(n))$, we have
    \[
        \Rm\#\mathcal{I}=\frac{1}{2}\Ric\owedge\id-\Rm.
    \]
    In particular,
    \[
        \mathcal{I}^\#=(n-2)\mathcal{I}.
    \]
    Here we use $\Ric\owedge\id$ to denote $\Ric\owedge g$ when
            $g_{ij}=\delta_{ij}$.
\end{lemma}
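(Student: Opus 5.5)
We need to prove $\Rm\#\mathcal{I}=\frac12\Ric\owedge\id-\Rm$ and $\mathcal{I}^\#=(n-2)\mathcal{I}$, working entirely with the explicit component formula \eqref{eqn Rm sharp} for the sharp product. First polarize \eqref{eqn Rm sharp}: by bilinearity and symmetry of $\#$, $M\#N=\frac12\bigl((M+N)^\#-M^\#-N^\#\bigr)$. Hence for $X,Y,Z,W\in\mathbb{R}^n$,
\[
(M\#N)(X,Y,Z,W)=\tfrac12\sum_{p,q}\bigl(M_{XpZq}N_{YpWq}+N_{XpZq}M_{YpWq}-M_{YpZq}N_{XpWq}-N_{YpZq}M_{XpWq}\bigr),
\]
where $M_{XpZq}:=M(X,e_p,Z,e_q)$.

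Next substitute $N=\mathcal{I}$. Its components are $\mathcal{I}_{abcd}=\delta_{ac}\delta_{bd}-\delta_{ad}\delta_{bc}$, so
\[
\mathcal{I}(Y,e_p,W,e_q)=\langle Y,W\rangle\delta_{pq}-Y^qW^p .
\]
Summing over $p,q$ in the first term gives
\[
\sum_{p,q}M(X,e_p,Z,e_q)\,\mathcal{I}(Y,e_p,W,e_q)=\langle Y,W\rangle\Ric_M(X,Z)-M(X,W,Z,Y).
\]
Treating the other three terms the same way, the $\Ric$-contributions assemble into
\[
\tfrac12\bigl(\langle Y,W\rangle\Ric(X,Z)+\langle X,Z\rangle\Ric(Y,W)-\langle X,W\rangle\Ric(Y,Z)-\langle Y,Z\rangle\Ric(X,W)\bigr),
\]
which is exactly $\frac12(\Ric\owedge\id)(X,Y,Z,W)$ by the Kulkarni--Nomizu formula given above.

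The remaining terms are, up to the overall factor $\frac12$,
\[
-M(X,W,Z,Y)-M(Z,Y,X,W)+M(Y,W,Z,X)+M(Z,X,Y,W).
\]
This regrouping is the main place where care is needed, since the sign conventions must be tracked exactly. Using the pair symmetry and antisymmetries of $M$, these become $2M(X,W,Y,Z)+2M(Y,W,Z,X)$. The first Bianchi identity in the first three slots, combined with antisymmetry, then gives $M(X,W,Y,Z)+M(Y,W,Z,X)=M(X,Y,W,Z)$; indeed $M(X,W,Y,Z)+M(W,Y,X,Z)+M(Y,X,W,Z)=0$. This yields
\[
2M(X,Y,W,Z)=-2M(X,Y,Z,W),
\]
so after the factor $\frac12$ we get $-\Rm$. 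I would double-check this by a sanity test: taking $M=\mathcal{I}$ should reproduce the second claim.

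For the second claim, put $\Rm=\mathcal{I}$. Then $\Ric=(n-1)\id$, and since $\id\owedge\id=2\mathcal{I}$ under the given Kulkarni--Nomizu convention,
\[
\mathcal{I}^\#=\tfrac12(n-1)\cdot 2\mathcal{I}-\mathcal{I}=(n-2)\mathcal{I}.
\]
This value also serves as an independent check of the normalization of \eqref{eqn Rm sharp}: on the round sphere, $\mathcal{I}^\#$ evaluated on $e_1\wedge e_2$ counts $p,q\ne1,2$ contributions, giving $n-2$, which confirms the factors.
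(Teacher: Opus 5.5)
Your computation is correct. Each step checks out:
\begin{itemize}
\item the polarization of \eqref{eqn Rm sharp};
\item the contraction with $\mathcal{I}(Y,e_p,W,e_q)=\langle Y,W\rangle\delta_{pq}-Y^qW^p$;
\item the matching of the four Ricci terms with $\frac12(\Ric\owedge\id)(X,Y,Z,W)$;
\item the regrouping of the leftover terms into $2M(X,W,Y,Z)+2M(Y,W,Z,X)$;
\item the Bianchi step that produces $-2M(X,Y,Z,W)$.
\end{itemize}
Since $\mathcal{I}\in S_B^2(\mathfrak{so}(n))$, specializing to $M=\mathcal{I}$ for the second claim is legitimate.

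The paper gives no argument for this lemma. It quotes the identity from B\"ohm--Wilking and only adjusts notation. Your route therefore genuinely differs: it is a self-contained index computation that uses nothing beyond the appendix formula \eqref{eqn Rm sharp} and the first Bianchi identity.

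What your computation buys is a direct check that the sign and the constant $\frac12$ are correct in the paper's own normalization: inner product $-\frac12\tr(uv)$, $\mathcal{I}_{ijkl}=\delta_{ik}\delta_{jl}-\delta_{il}\delta_{jk}$, and $\id\owedge\id=2\mathcal{I}$. Section~\ref{pre-cone} relies on exactly this when it writes $Q(\Lambda)=\Phi^2Q(\Rm)+\varphi\Phi\Ric\owedge\id+3\varphi^2\mathcal{I}$. The citation is shorter, but it leaves that convention bookkeeping to the reader.
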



\dummyline{1}

\section{Cut-off functions}

For readers' convenience, we provide a proof of existence of cut-off functions we used in this paper. 

\begin{lemma} \label{fact exist. of cutoff}
    For any $\varepsilon\in(0,1/2]$, $\sigma>0$ and $r>0$, there exists a
            smooth cut-off function $\phi: \mathbb{R}\to[0,1]$ such that
    \begin{enumerate}
        \item[\textup{(\romannumeral1)}] $\phi=1$ on $(-\infty,r]$;
        \item[\textup{(\romannumeral2)}] $\phi=0$ on $[r+\sigma,+\infty)$;
        \item[\textup{(\romannumeral3)}] $\displaystyle0\ge\phi'\ge
                -\frac{2\phi^{1-\varepsilon}}{\varepsilon\sigma}$;
        \item[\textup{(\romannumeral4)}] $\displaystyle|\phi''|\le
                \frac{5\phi^{1-2\varepsilon}}{\varepsilon^2\sigma^2}$.
    \end{enumerate}
\end{lemma}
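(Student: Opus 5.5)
The plan is to build $\phi$ as a power of a standard cut-off, $\phi=\psi^{N}$ with $N:=1/\varepsilon\ge 2$. The point is that powers automatically give derivative bounds in terms of fractional powers of $\phi$.

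First, fix once and for all a smooth non-increasing $\psi_0:\mathbb{R}\to[0,1]$ with $\psi_0=1$ on $(-\infty,0]$, $\psi_0=0$ on $[1,\infty)$, and absolute bounds $|\psi_0'|\le c_1$, $|\psi_0''|\le c_2$. Then set $\psi(s):=\psi_0((s-r)/\sigma)$, so $\psi=1$ on $(-\infty,r]$, $\psi=0$ on $[r+\sigma,\infty)$, $|\psi'|\le c_1/\sigma$ and $|\psi''|\le c_2/\sigma^2$. Define $\phi:=\psi^{1/\varepsilon}$. Properties (\romannumeral1) and (\romannumeral2) are immediate, and $\phi'\le0$.

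Next, compute the derivatives:
\[
\phi'=\tfrac1\varepsilon\psi^{1/\varepsilon-1}\psi'=\tfrac1\varepsilon\phi^{1-\varepsilon}\psi',
\]
which gives $|\phi'|\le \frac{c_1}{\varepsilon\sigma}\phi^{1-\varepsilon}$. For the second derivative,
\[
\phi''=\tfrac1\varepsilon\big(\tfrac1\varepsilon-1\big)\psi^{1/\varepsilon-2}(\psi')^2+\tfrac1\varepsilon\psi^{1/\varepsilon-1}\psi''.
\]
Since $\psi^{1/\varepsilon-2}=\phi^{1-2\varepsilon}$ and $\psi\le1$, this yields
\[
|\phi''|\le \frac{c_1^2}{\varepsilon^2\sigma^2}\phi^{1-2\varepsilon}+\frac{c_2}{\varepsilon\sigma^2}\phi^{1-\varepsilon}\le\frac{c_1^2+c_2/2}{\varepsilon^2\sigma^2}\phi^{1-2\varepsilon},
\]
where we used $\phi^{1-\varepsilon}\le\phi^{1-2\varepsilon}$ and $\varepsilon\le1/2$. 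The exponents are nonnegative because $\varepsilon\le1/2$, so everything is smooth: $\psi^{1/\varepsilon}$ is $C^\infty$ away from $\{\psi=0\}$, and near that set $\psi_0$ can be taken flat, making all derivatives vanish.

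The remaining and only delicate step is arranging the explicit constants $c_1\le2$ and $c_1^2+c_2/2\le5$. The reason this is tight is that any cut-off going from $1$ to $0$ over a unit interval must have $\sup|\psi_0'|\ge1$. So one chooses $\psi_0$ as a mollified piecewise-linear or $C^1$ cubic-type profile, with slope at most $2$ and second derivative at most $2$: for example, a smoothing of the quadratic spline that equals $1-2s^2$ on $[0,\frac12]$ and $2(1-s)^2$ on $[\frac12,1]$, mollified slightly with the interval rescaled. That gives $c_1\approx2$ and $c_2\approx4$, hence $c_1^2+c_2/2\approx6$, which is slightly too big. If so, I would instead exploit $(\psi')^2$ versus $\psi$: for the quadratic spline, near $s=1$ one has $(\psi_0')^2=8\psi_0$, so the first term can be absorbed using an extra factor of $\psi$. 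Alternatively, one can simply accept any absolute constant, which is all that the applications in Theorem \ref{thm t power k} use (there only an absolute $C_0$ matters). I expect this constant bookkeeping to be the main, though purely cosmetic, obstacle.
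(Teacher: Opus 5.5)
Your construction is the paper's: rescale a fixed profile to $[r,r+\sigma]$, raise it to the power $1/\varepsilon$, and use the same formulas for $\phi'$ and $\phi''$. The gap is that you never obtain the constants $2$ and $5$. Beyond the power trick, these constants are the whole content of (iii) and (iv). You end by sketching an absorption argument that you do not carry out, or by accepting an unspecified absolute constant. The latter proves a weaker statement than the one claimed.

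The difficulty comes from a lossy estimate. In $\phi''$ the coefficient of the first term is $\frac{1}{\varepsilon}\bigl(\frac{1}{\varepsilon}-1\bigr)=\frac{1-\varepsilon}{\varepsilon^2}$, and you rounded it up to $\frac{1}{\varepsilon^2}$. Keep that factor, and write $\phi^{1-\varepsilon}=\phi^{\varepsilon}\phi^{1-2\varepsilon}\le\phi^{1-2\varepsilon}$ in the second term. This gives
\[
|\phi''|\le\frac{c_1^2(1-\varepsilon)+c_2\varepsilon}{\varepsilon^2\sigma^2}\,\phi^{1-2\varepsilon}.
\]
With $c_1=2$ and $c_2=5$ the numerator is $4+\varepsilon\le\frac{9}{2}<5$. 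This is exactly how the paper concludes, using a mollified piecewise-linear profile with $0\ge\psi_0'\ge-2$ and $|\psi_0''|\le5$. In fact any profile with $|\psi_0'|\le2$ and $|\psi_0''|\le6$ works, so no $(\psi')^2$-versus-$\psi$ absorption is needed.

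Some smaller points:
\begin{itemize}
\item A profile with slope at most $2$ and second derivative at most $2$ cannot exist. Dropping by $1$ over a unit interval with zero slope at both ends forces $\sup|\psi_0''|\ge4$.
\item Property (iii) needs $|\psi_0'|\le2$ exactly. Mollifying your quadratic spline and then rescaling it back into $[0,1]$ generally pushes the slope slightly above $2$. By contrast, take a piecewise-linear profile of slope $2$ on $[\frac14,\frac34]$ and mollify it at scale at most $\frac14$. Mollification does not increase $\sup|\psi_0'|$, so the bound $2$ is kept without rescaling.
\item Flatness of $\psi_0$ alone does not make a non-integer power $C^\infty$. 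Either choose $\psi_0$ explicitly, or note that $\phi\in C^2$ because all exponents involved are nonnegative; $C^2$ is all the applications use.
\end{itemize}
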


\dummyline{0.5}

\begin{proof}
    Let $\varphi(x)$ be a smooth function such that $\varphi=1$ on $(-\infty,0]$,
            $\varphi=0$ on $[1,+\infty)$, $0\ge\varphi'\ge-2$
            and $|\varphi''|\le5$.
    This can be constructed by mollifying the piecewise linear function.
    Now we can choose the smooth function
    \[
        \phi(x):=\varphi^{\frac{1}{\varepsilon}}(\frac{x-r}{\sigma}).
    \]
    Then
    \[
        \phi'=\frac{1}{\varepsilon}\varphi^{\frac{1}{\varepsilon}-1}
                \varphi'\frac{1}{\sigma}
                =\frac{1}{\varepsilon\sigma}\phi^{1-\varepsilon}\varphi'
                \ge-\frac{2\phi^{1-\varepsilon}}{\varepsilon\sigma},
    \]
    and
    \begin{align*}
        |\phi''|&=\left|\frac{1}{\varepsilon\sigma^2}\left(\frac{1}{\varepsilon}
                -1\right)\varphi^{\frac{1}{\varepsilon}-2}(\varphi')^2
                +\frac{1}{\varepsilon\sigma^2}\varphi^{\frac{1}{\varepsilon}-1}
                \varphi''\right| \\
        &\le\frac{4|1-\varepsilon|\cdot\phi^{1-2\varepsilon}}{\varepsilon^2
                \sigma^2}+\frac{5\varepsilon\phi^\varepsilon\cdot
                \phi^{1-2\varepsilon}}{\varepsilon^2\sigma^2}.
    \end{align*}
    Therefore, the function $\phi$ satisfies all the required properties.
\end{proof}

\dummyline{1}


\bibliographystyle{amsplain}
\bibliography{88-refs}

\end{document}